\newcommand{\BZ}{\mathbb{Z}} 
\newcommand{\BN}{\mathbb{N}} 
\newcommand{\BR}{\mathbb{R}}
\newcommand{\CA}{\mathcal{A}}
\newcommand{\CB}{\mathcal{B}}
\newcommand{\CC}{\mathcal{C}}
\newcommand{\CL}{\mathcal{L}}
\newcommand{\CQ}{\mathcal{Q}}
\newcommand{\CT}{\mathcal{T}}
\newcommand{\co}{\colon\thinspace}
\newcommand{\la}{\langle}
\newcommand{\ra}{\rangle}
\newcommand{\bd}{\partial}
\newcommand{\inv}{^{-1}}
\theoremstyle{plain}
\newtheorem{theorem}{Theorem}
\newtheorem*{GeoTwist}{Theorem \ref{th:geo-twist}}
\newtheorem*{AlgTwist}{Theorem \ref{th:alg-twist}}
\newtheorem{lemma}[theorem]{Lemma}
\newtheorem{proposition}[theorem]{Proposition}
\newtheorem{corollary}[theorem]{Corollary}
\newtheorem*{claim*}{Claim}
\theoremstyle{definition}
\newtheorem{definition}[theorem]{Definition}
\newtheorem{example}[theorem]{Example}
\newtheorem{remark}[theorem]{Remark}
\numberwithin{theorem}{section}
\let\ssection=\section
\renewcommand{\section}{\setcounter{equation}{0}\ssection}
\DeclareMathOperator{\Out}{Out}
\DeclareMathOperator{\vol}{vol}
\DeclareMathOperator{\Lip}{Lip}
\begin{document}


\title[Relative twisting in Outer space]{Relative twisting in Outer space}

\author[M.~Clay]{Matt Clay}
\address{Dept.\ of Mathematics \\
  Allegheny College\\
  Meadville, PA 16335}
\email{\href{mailto:mclay@allegheny.edu}{mclay@allegheny.edu}}

\author[A.~Pettet]{Alexandra Pettet}
\address{Dept.\ of Mathematics \\
  University of British Columbia\\
  Vancouver, BC V6T 1Z2}
\email{\href{mailto:alexandra@math.ubc.ca}{alexandra@math.ubc.ca}}

\thanks{\tiny The first author is partially supported by NSF grant
  DMS-1006898. The second author was partially supported by NSF grant
  DMS-0856143, NSF RTG DMS-0602191, EPSRC grant EP/D073626/2, and 
  an NSERC Discovery Grant.}

\begin{abstract}
  Subsurface projection is indispensable to studying the
  geometry of the mapping class group and the curve complex of a
  surface.  When the subsurface is an annulus, this projection is
  sometimes called \emph{relative twisting}.  We give two alternate
  versions of relative twisting for the outer automorphism group of a
  free group. We use this to describe sufficient conditions for when a
  folding path enters the {\it thin} part of Culler-Vogtmann's Outer
  space.  As an application of our condition, we produce a sequence of
 fully irreducible outer automorphisms whose axes in Outer space travel through graphs with
  arbitrarily short cycles; we also describe the asymptotic behavior of their translation lengths.
\end{abstract}
\maketitle


\section{Introduction}\label{sc:intro}

Culler and Vogtmann gave the first account of {\it Outer space $CV_k$}
in their 1986 paper \cite{Culler-Vogtmann}: elements are finite marked
projectivized metric graphs with fundamental group $F_k$, the rank $k$
non-abelian free group, and two graphs are close when the lengths of
some finite collection of elements of $F_k$ are close.  By considering
the universal covers of the marked graphs, $CV_k$ is also described as
the space of free simplicial minimal isometric 
actions of $F_k$ on $\BR$--trees.
Topologically, $CV_k$ has the structure of a contractible simplicial
complex (missing some faces) on which $\Out F_k$ acts properly and
simplicially by changing markings. Metrically, however, Outer space
remains largely a mystery. Much of the conjectural picture for Outer
space geometry comes from Teichm\"uller theory, where the
Teichm\"uller metric, the Weil-Petersson metric, and the Thurston
metric have been defined and extensively studied. Unfortunately Outer
space lacks much of the structure that paves the way for these
metrics; perhaps most notably, $CV_k$ is {\it not} a manifold.

Of the three metrics on Teichm\"uller space mentioned above, only the
third, the Thurston metric, has been interpreted in the Outer space
setting; there it is more commonly referred to as the {\it Lipschitz
  metric}. Features of this metric were recorded by
Francaviglia-Martino in \cite{un:FM1,un:FM2}. 

Algom-Kfir (see also Hamenst\"adt \cite{un:Hamenstadt}) proved that
axes of fully irreducible elements of $\Out F_k$ are {\it strongly
  contracting}, so that $CV_k$ exhibits a characteristic of negative
curvature in these directions. Her result was anticipated by a theorem
of Minsky \cite{ar:Minsky}, 
which showed that Teichm\"uller geodesics contained in the
$\epsilon$-{\it thick} part of Teichm\"uller space are strongly
contracting, uniformly depending on $\epsilon$. Algom-Kfir's
contraction constants depend on the outer automorphisms to which they
belong.  The question of whether these constants only depend on the
geometry of the graphs along the axes has not been addressed.

For $\epsilon > 0$, we define $CV_k^\epsilon$ as the subset of $CV_k$
consisting of graphs that contain a cycle of length less than
$\epsilon$.  We should perhaps resist calling $CV_k^\epsilon$ the
``thin part'' of Outer space as it is not clear that Algom-Kfir's
theorem extends uniformly to geodesics in the complement of
$CV_k^\epsilon$.  Nevertheless, this set does hold some nice
properties analogous to those of the thin part of Teichm\"uller space;
for instance, the cusps of the quotient $CV_k/ \Out F_k$ are contained
in $CV_k^\epsilon$, and the quotient $(CV_k -CV_k^\epsilon)/ \Out F_k$
of the complement is quasi-isometric to $\Out F_k$.

The main results of this paper provide conditions, akin to those of
Rafi \cite{ar:Rafi05} in the setting of Teichm\"uller space, that
guarantee that a geodesic or an axis of a fully irreducible element
travel through $CV_k^\epsilon$.  Our criteria are based on a notion of
{\it relative twisting} in Outer space. We come at this from two
different points of view, each motivated by the quest to find
satisfactory analogues of subsurface projection and relative twisting
from the theory of mapping class groups \cite{ar:MM00,ar:FLM01}.

\medskip \noindent {\bf Geometric:} Our first approach to relative twisting directly
adapts the original geometric definition to free
groups.  We give a pairing $\tau_a(G,G')$ between two graphs $G,G' \in
CV_k$ relative to some nontrivial $a \in F_k$, which we define by means
of the \emph{Guirardel core} \cite{ar:Gu05}.  This is a certain
2--complex associated to the graphs that provides a means of selecting a geometry
for $F_k$ that ``sees'' both $G$ and $G'$.  We obtain a condition on
the graphs that, when satisfied, enables us to construct a connecting
geodesic between them, traveling through $CV_k^\epsilon$.

\begin{GeoTwist}
  Suppose $G,G' \in CV_k$ with $d = d_L(G,G')$ such that $\tau_a(G,G')
  \geq n+2$ for some $a \in F_k$.  Then there is a geodesic $\alpha\co
  [0,d] \to CV_k$ such that $\alpha(0) = G$ and $\alpha(d) = G'$ and
  for some $t \in [0,d]$, we have $\ell_{\alpha(t)}(a) \leq 1/n$.  In
  other words, $\alpha([0,d]) \cap CV_k^{1/n} \neq \emptyset$.
\end{GeoTwist}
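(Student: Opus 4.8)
\emph{The plan.}
I would take for $\alpha$ a \emph{folding path} from $G$ to $G'$ in the sense of Francaviglia--Martino \cite{un:FM1,un:FM2}, which is automatically a $d_L$--geodesic, and then prove the contrapositive: if such a path $\alpha=\{G_t\}_{t\in[0,d]}$ has $\ell_{G_t}(a)>1/n$ for every $t$, then $\tau_a(G,G')<n+2$. Concretely, fix an optimal Lipschitz map $G\to G'$, let $\{G_t\}$ be the associated (greedy) folding path rescaled so that $\vol(G_t)=1$; then $d_L(G,G_t)=t$, $d_L(G_t,G')=d-t$, the optimal maps $G\to G_t$ and $G_t\to G'$ have Lipschitz constants $e^t$ and $e^{d-t}$, and $t\mapsto\ell_{G_t}(a)$ is continuous with $e^{-|t-s|}\ell_{G_s}(a)\le\ell_{G_t}(a)\le e^{|t-s|}\ell_{G_s}(a)$. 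In particular $\ell_{G_t}(a)$ attains a minimum $\epsilon_0$ at some $t_0\in[0,d]$, and it is enough to show that $\epsilon_0>1/n$ forces $\tau_a(G,G')<n+2$, since then $\ell_{G_{t_0}}(a)\le 1/n$ and $\alpha([0,d])\cap CV_k^{1/n}\neq\emptyset$.

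\emph{The core estimate.}
The heart of the argument is to bound $\tau_a(G,G')$ in terms of $\epsilon_0$ by routing the Guirardel core through the intermediate graph $H:=G_{t_0}$, which has unit volume and carries $a$ as a loop of length $\epsilon_0$. First I would prove a coarse additivity statement, $\tau_a(G,G')\le\tau_a(G,H)+\tau_a(H,G')+2$, the $+2$ absorbing the two boundary fundamental domains of the $\langle a\rangle$--action that can be double counted; this should follow by comparing $\CC(T_G,T_{G'})$, $\CC(T_G,T_H)$ and $\CC(T_H,T_{G'})$ over a common copy of $\Ax_H(a)$, using that the resolution maps $T_G\leftarrow T_H\rightarrow T_{G'}$ produced by the folding path are morphisms of $\BR$--trees. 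Second, I would bound each of $\tau_a(G,H)$ and $\tau_a(H,G')$ by (roughly) $1/(2\epsilon_0)$: restricting $\CC(T_G,T_H)$ to $T_G\times\Ax_H(a)$ yields a ``thickened axis'' on which $\langle a\rangle$ acts cocompactly, a fundamental domain of $\Ax_H(a)$ has length $\epsilon_0$, and the number of such fundamental domains meeting the diagonally embedded $\Ax_G(a)$ is at most the diameter of the relevant piece of the core, which the normalization $\vol(H)=1$ controls by a constant. Combining, $\tau_a(G,G')\le 1/\epsilon_0+2<n+2$ whenever $\epsilon_0>1/n$, the desired contradiction.

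\emph{The main obstacle.}
I expect the real difficulty to be precisely these two core inequalities: the Guirardel core of a pair of trees is not functorially related to the cores of refined or intermediate pairs, so both the coarse additivity of $\tau_a$ along a folding path and the ``width is at most a controlled number of fundamental domains'' bound demand a careful analysis of how the core transforms under the folding morphisms \cite{ar:Gu05}, and the constants must be tracked sharply enough to land $n+2$ against $1/n$ (this is exactly where the two spare fundamental domains are spent). Secondary points to verify: that the greedy folding path is genuinely a $d_L$--geodesic after arc--length reparametrization with $\alpha(0)=G$, $\alpha(d)=G'$; that times at which some \emph{other} short loop appears in $G_t$ do not corrupt the estimate; and that the projectivization ($\vol=1$) normalization is applied consistently between $CV_k$ and the unprojectivized trees used to build the core. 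The whole scheme parallels Rafi's thin--part criterion for Teichm\"uller geodesics \cite{ar:Rafi05}, with the annular cover of a curve replaced by the $\langle a\rangle$--quotient of the core.
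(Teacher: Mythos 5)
Your strategy (contrapositive along a fixed greedy folding path, plus a coarse additivity $\tau_a(G,G')\le\tau_a(G,H)+\tau_a(H,G')+2$ and a width bound $\tau_a(G,H)\lesssim 1/\epsilon_0$) is genuinely different from the paper's, and as it stands it has a fatal gap in the second core estimate. The quantity $|\tau'\cap\la a\ra\tau|$ is the number of points $a^ip$ (with $p$ the midpoint of an edge of $T_G$) lying in the slice $\CC_{p'}\subset T_G$, so by convexity of slices your bound requires that the diameter of a slice of the Guirardel core be controlled by a \emph{universal} constant. That is false: slices are controlled only by the bounded backtracking constant of a connecting map, which is at most $\Lip(f)\vol(G/F_k)\le e^{d_L(G,H)}$ and in general grows without bound (e.g.\ for $G'=G\phi^N$ the total core volume $i(T_G,T_{G'})$, which is a finite sum of slice volumes, grows like $\lambda^N$). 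The normalization $\vol(H)=1$ does not rescue this. With a diameter bound of order $e^{d_L(G,H)}$ rather than $O(1)$, your estimate gives $\tau_a(G,H)\lesssim e^{d_L(G,H)}/\epsilon_0$, which cannot be played off against $n+2$. The coarse additivity step is also only asserted, not proved, and is itself a substantial claim given the non-functoriality of the core; you correctly flag it as the main obstacle but do not resolve it. Note also that you are attempting a strictly stronger statement than the theorem (that \emph{every} greedy folding path enters the thin part), which raises the burden further.

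For comparison, the paper's route avoids both problems by never trying to bound the core's slices: Proposition \ref{prop:ntwist-vp} extracts from the hypothesis $\tau_a(G,G')\ge n+2$ an arc of a single track lying in a slice $\CC_{p'}$ that $(n+2)$--covers $a$, hence (by Proposition \ref{prop:core}(3)) a \emph{vanishing path} that $(n+2)$--covers $a$; then Proposition \ref{prop:vpimpliessmallcycle} builds a particular geodesic that first folds the $\la a\ra$--cover $H_a$ to an immersion $H_a'$, so that at the resulting graph $G_1$ the surviving vanishing path still $n$--covers $a$ and $\ell_{G_1}(a^n)=\ell_{G'}(a^n)/\Lip(f_1)\le BBT(f_1)/\Lip(f_1)\le 1$. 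The exponentially large quantities ($\Lip$ and $BBT$) cancel in that ratio; your scheme has no analogous cancellation. If you want to pursue your route, you would need to prove both core lemmas from scratch, and the width bound would have to be reformulated so that only a ratio of $BBT$ to $\Lip$ appears.
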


As a corollary, we get the following lower bound the distance between
two marked graphs in $CV_k$.  

\begin{corollary}
  Suppose $G,G' \in CV_k$ and $G'$ does not have a cycle of length
  less than $\epsilon$.  Then:
  \[ d_L(G,G') \geq \log \sup_{1 \neq a \in F_k} \epsilon\tau_a(G,G')\]
\end{corollary}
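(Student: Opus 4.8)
The plan is to read the corollary off Theorem \ref{th:geo-twist} using two standard features of the Lipschitz metric. First, for any $x,y\in CV_k$ and any $1\ne a\in F_k$ one has $d_L(x,y)\ge\log\bigl(\ell_y(a)/\ell_x(a)\bigr)$, which is immediate from the characterization of the stretch factor (see Francaviglia--Martino \cite{un:FM1}). Second, $d_L$ is additive along geodesics: if $\alpha\co[0,d]\to CV_k$ is a geodesic with $d=d_L(\alpha(0),\alpha(d))$, then for any $t\in[0,d]$ we have $d_L(\alpha(0),\alpha(d))=d_L(\alpha(0),\alpha(t))+d_L(\alpha(t),\alpha(d))\ge d_L(\alpha(t),\alpha(d))$.

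Fix $1\ne a\in F_k$; we may assume $\epsilon\,\tau_a(G,G')>1$, since otherwise this $a$ contributes a nonpositive term to the supremum and there is nothing to prove. Let $n\ge1$ be the largest integer with $n+2\le\tau_a(G,G')$ and apply Theorem \ref{th:geo-twist}: there is a geodesic $\alpha\co[0,d]\to CV_k$ with $\alpha(0)=G$, $\alpha(d)=G'$, and some $t\in[0,d]$ with $\ell_{\alpha(t)}(a)\le1/n$. The key geometric input is the inequality $\ell_{G'}(a)\ge\epsilon$: the conjugacy class of $a$ is carried by a tight essential loop in $G'$, whose image is a connected subgraph with nontrivial fundamental group and hence contains an embedded cycle; since the loop crosses every edge of its image at least once, its length is at least the length of that cycle, which by hypothesis is at least $\epsilon$. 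Feeding this into the first fact, applied from $\alpha(t)$ to $\alpha(d)=G'$, and then using the second fact gives
\[
d_L(G,G')\ \ge\ d_L(\alpha(t),\alpha(d))\ \ge\ \log\frac{\ell_{G'}(a)}{\ell_{\alpha(t)}(a)}\ \ge\ \log\frac{\epsilon}{1/n}\ =\ \log(\epsilon n).
\]

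Finally one optimizes over $n$ and over $a$: since $d_L(G,G')\ge\log(\epsilon n)$ holds for every integer $n\ge1$ with $n+2\le\tau_a(G,G')$, letting $n$ run up to the constraint — so $n=\tau_a(G,G')-2$ when the twisting number is an integer, which is the case of interest — and then taking the supremum over all nontrivial $a$ yields $d_L(G,G')\ge\log\sup_a\epsilon\,\tau_a(G,G')$ (the additive constant $2$ being absorbed in the regime of large relative twisting, where the estimate has content). I do not foresee a real obstacle here: the argument is essentially bookkeeping on top of Theorem \ref{th:geo-twist}, and the one step that is not purely formal is the systole bound $\ell_{G'}(a)\ge\epsilon$, which is exactly where the no-short-cycle hypothesis on $G'$ is used.
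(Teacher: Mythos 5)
Your proof is correct and follows essentially the same route as the paper's: apply Theorem \ref{th:geo-twist} to produce a point $\alpha(t)$ on a geodesic from $G$ to $G'$ with $\ell_{\alpha(t)}(a)\le 1/n$, note $\ell_{G'}(a)\ge\epsilon$ from the systole hypothesis, and conclude via Proposition \ref{prop:loops} and additivity along the geodesic that $d_L(G,G')\ge d_L(\alpha(t),G')\ge\log(\epsilon n)$. The only difference is that you explicitly flag the additive discrepancy between the hypothesis $\tau_a\ge n+2$ in the theorem and the clean bound $\log(\epsilon\tau_a(G,G'))$ in the corollary; the paper's own proof silently elides this same constant.
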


\begin{proof}
  Let $a \in F_k$ be nontrivial.  If $\tau_a(G,G') \geq n$, then by
  Theorem \ref{th:geo-twist}, there is a geodesic $\alpha\co [0,d] \to
  CV_k$ such that $\alpha(0) = G$ and $\alpha(d) = G'$, and for some $t
  \in [0,d]$ that $\ell_{\alpha(t)}(a) \leq 1/n$.  As $G'$ does not have a cycle of length less than
  $\epsilon$, it will follow from Proposition \ref{prop:loops} that $d_L(\alpha(t),G') \geq
  \frac{\epsilon}{1/n} = \epsilon n$.  As $\alpha(t)$ is on a geodesic
  from $G$ to $G'$, the corollary holds.
\end{proof}

The similar lower bound for Teichm\"uller space is a special case of a
theorem of Rafi \cite{ar:Rafi07}.

\medskip \noindent {\bf Algebraic:} 
The second point of view to relative twisting gives a
pairing $\tau_a(T,\Lambda)$ between a tree $T \in \overline{CV}_k$ and
an algebraic lamination $\Lambda$ of $F_k$ relative to some nontrivial
$a \in F_k$.  This pairing measures how the axes of $a$ in $T$ overlap with
the leaves of the lamination.  It is similar to the notion of
``twisting'' used by Alibegovi\'c \cite{ar:Al02}.  We obtain a criterion
that implies that the axis of a fully irreducible element travels through
$CV_k^\epsilon$ in terms of its unstable tree and lamination.

\begin{AlgTwist}
  Suppose $\phi \in \Out F_k$ is fully irreducible, with unstable tree $T_-$ and
  lamination $\Lambda_-$ such that $\tau_a(T_-,\Lambda_-) \geq n+4$
  for some $a \in F_k$.  Then given any train-track $G$, there is an
  axis $\CL_\phi$ for $\phi$ that contains $G$ and a graph $G_0$ with 
  $\ell_{G_0}(a) \leq 1/n$. In other words, $\CL_\phi
  \cap CV_k^{1/n} \neq \emptyset$.
\end{AlgTwist}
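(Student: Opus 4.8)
We want to deduce the Algebraic Twisting Theorem from the Geometric Twisting Theorem. The plan is to interpolate: the hypothesis concerns the pair $(T_-,\Lambda_-)$ built from the fully irreducible $\phi$, while the conclusion concerns an actual axis $\CL_\phi$ in $CV_k$. So the first step is to move from the algebraic pairing $\tau_a(T_-,\Lambda_-)$ to the geometric pairing $\tau_a(G,\phi^m G)$ for a point $G$ sufficiently far along the axis. Starting from the given train-track representative $G$, I would consider the folding-path axis $\CL_\phi$ through $G$ (so $\CL_\phi$ is $\phi$-periodic, i.e. $\phi\cdot\CL_\phi=\CL_\phi$ as a set, up to translation by the translation length), and compare the attracting/repelling data of $\phi$ at infinity with the Guirardel core of $G$ and $\phi^m G$. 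As $m\to\infty$ the trees $\phi^{-m}G$ converge (projectively) to $T_-$ and the laminations $\Lambda(\phi^m G)$ accumulate on $\Lambda_-$; the point is that the relative-twisting pairing is (up to a bounded additive error) continuous/semicontinuous under this convergence, so for $m$ large enough $\tau_a(G,\phi^m G)\ge \tau_a(T_-,\Lambda_-)-2\ge n+2$. This is where the constant $n+4$ in the algebraic version, versus $n+2$ in the geometric version, is being spent: the $+2$ absorbs the discrepancy between the algebraic pairing of the limiting pair and the geometric pairing of a large but finite comparison.

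With that in hand, apply the Geometric Twisting Theorem to the pair $G,\ G':=\phi^m G$: since $\tau_a(G,G')\ge n+2$, there is a geodesic $\alpha\colon[0,d]\to CV_k$ from $G$ to $\phi^m G$ with $\ell_{\alpha(t)}(a)\le 1/n$ for some $t$. Now I need to upgrade this geodesic to (a piece of) an axis of $\phi$. Since $G$ is a train track, the folding path from $G$ to $\phi G$ is a fundamental domain for the axis $\CL_\phi$, and its $\phi$-translates concatenate to a $\phi$-invariant geodesic line; the key technical point is that the folding geodesic from $G$ to $\phi^m G$ is (a reparametrization of) the concatenation of $m$ fundamental domains, i.e. it runs along $\CL_\phi$. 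Thus the geodesic $\alpha$ produced by Theorem~\ref{th:geo-twist} can be taken to be exactly this stretch of $\CL_\phi$ — here one uses that folding geodesics between a point and its iterate under a train-track map are unique / lie on the axis, a standard fact for fully irreducible automorphisms. Setting $G_0:=\alpha(t)$ gives a point on $\CL_\phi$ with $\ell_{G_0}(a)\le 1/n$, which is the conclusion.

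The main obstacle, I expect, is the first step: making precise the claim that the algebraic pairing $\tau_a(T_-,\Lambda_-)$ is within an additive constant of the geometric pairing $\tau_a(G,\phi^m G)$ for $m$ large. One must show (i) the axes of $a$ in $\phi^{-m}G$ (rescaled) converge to the axis of $a$ in $T_-$, which requires $a$ to not be "absorbed" — but since $\phi$ is fully irreducible, no conjugacy class is fixed, and $T_-$ has dense orbits, so $a$ acts hyperbolically on $T_-$ and the convergence of translation axes is controlled; and (ii) that the overlap of the $a$-axis with the leaves of $\Lambda(\phi^m G)$ — which is literally what $\tau_a(G,\phi^m G)$ reads off from the Guirardel core — stabilizes to the overlap with $\Lambda_-$, up to the bounded error coming from the coarseness of these projections (projections to annular/cyclic subgroups are only coarsely defined, with a universal additive ambiguity, exactly as in the surface case). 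Controlling this additive ambiguity uniformly in $m$, and checking it is at most $2$, is the crux; everything after that is an application of Theorem~\ref{th:geo-twist} together with the structure of folding axes.
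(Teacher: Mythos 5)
Your route is genuinely different from the paper's, and it has a gap at its central step. The paper does not pass through the geometric twist at all: it shows directly (Proposition \ref{prop:unstable}, Lemma \ref{lm:extend-segment}, Corollary \ref{co:extend-point}, Proposition \ref{prop:find-vp}) that the hypothesis $\tau_a(T_-,\Lambda_-)\geq n+4$ produces, in some train-track $G$ for $\phi$, a \emph{vanishing path} of the train-track map that $(n+2)$--covers $a$ (this is Proposition \ref{prop:ncover-vp}), and then feeds that vanishing path into Proposition \ref{prop:vpimpliessmallcycle} with $G'=G\phi$, so the resulting folding geodesic is a fundamental domain for an axis. Your proposal instead rests on the claim that $\tau_a(G,\phi^m G)\geq\tau_a(T_-,\Lambda_-)-2$ for $m$ large, i.e.\ that the Guirardel-core pairing between two points of $CV_k$ converges (up to a universal additive constant) to the tree--lamination pairing of the limiting data. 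Nothing in the paper establishes such a comparison; indeed the introduction explicitly states that the relationship between the algebraic and geometric twists ``appears likely'' but is deferred to a further paper. This is not a bookkeeping issue one can wave at with ``coarsely defined projections'': the geometric twist counts intersections of tracks in the core of two \emph{free simplicial} trees, while the algebraic twist measures overlap of the $a$--axis with leaves of $\Lambda_-$ in the non-simplicial limit tree $T_-$; the only continuity statement available in the paper (Proposition \ref{prop:lowersemicontinuous}) compares the algebraic pairing along a sequence of trees and laminations, not the two different pairings with each other.

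There is a second, smaller gap even if the first step were granted. Theorem \ref{th:geo-twist} (via Proposition \ref{prop:vpimpliessmallcycle}) produces one particular geodesic from $G$ to $\phi^m G$, constructed by first folding along the subgraph $H_a$. Geodesics for $d_L$ are far from unique, and a folding geodesic from $G$ to $G\phi^m$ need not be $\phi$--periodic, hence need not lie on any axis; the ``standard fact'' you invoke (uniqueness of folding geodesics between a point and its $\phi^m$--image, or that they automatically run along $\CL_\phi$) is not true in general. The paper avoids this by working with a single fundamental domain $G\to G\phi$: the specific folding path built in Proposition \ref{prop:vpimpliessmallcycle} is then a legitimate choice in Algom-Kfir's axis construction, which is exactly why the theorem is phrased as ``there is \emph{an} axis.'' If you want to salvage your approach you would need either to run the comparison of pairings with $m=1$ (which your limiting argument does not give) or to prove that the short-$a$ graph produced on the $G\to\phi^m G$ geodesic can be relocated onto a genuine axis.
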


As an application of Theorem \ref{th:alg-twist}, we examine outer
automorphisms of $F_k$ that are products of powers of two Dehn twists
$\delta_1$ and $\delta_2$ which ``fill'' in an appropriate sense.  We
show (Section \ref{sc:example}) that axes for
$\delta_1^n\delta_2^{-n}$ travel through graphs with a cycle of length
$\sim 1/n$.  Moreover, we can estimate their translation lengths on
$CV_k$; we compute that they grow logarithmically in $n$ (Theorem
\ref{th:twist-translation}).

The proofs of Theorems \ref{th:geo-twist} and \ref{th:alg-twist} are
similar.  In both cases we show that large relative twist implies the
existence of a certain path that contains a large power of $a$
(Propositions \ref{prop:ntwist-vp} and \ref{prop:ncover-vp}).  These
paths, called vanishing paths, are folded, either in the map $G \to
G'$ or in a train-track map representing $\phi$, and are homotopically
trivial in the image.  The most efficient way to fold over a loop
representing $a$ several times is to first make $a$ a short loop
(Proposition \ref{prop:vpimpliessmallcycle}).

It appears likely that our definition of algebraic twist (at least as
used in Theorem \ref{th:alg-twist}) is a special case of our
definition of geometric twist.  We anticipate investigating this
relationship in a further paper.

The paper is organized as follows. In Section \ref{sc:prelim} we
review some of the basic theory of Outer space and the Lipschitz
metric, irreducible outer automorphisms, train-track maps, and
laminations; only Section \ref{ssc:nvp} contains some new
material. As this section is already lengthy, some background, such as a summary of currents for free groups, is suppressed until it is needed in Section \ref{sc:example}. In Section \ref{sc:gtwist}, following an outline of some properties of Guirardel's core
and a brief review of relative twisting for the mapping class group, the
first, ``geometric,'' analogue of relative twisting for $\Out F_k$ is
given. Section \ref{sc:atwist} is concerned
with the second, ``algebraic,'' notion of relative twisting. Each of
Sections \ref{sc:gtwist} and \ref{sc:atwist} conclude with a
proposition essential to the proofs of the main theorems, found in
Section \ref{sc:smallcycle}. In Section \ref{sc:example}, we bring
together results from Section \ref{sc:smallcycle} and previous papers
of the authors \cite{un:CP2,ar:CP} to describe a method for
constructing geodesic axes of fully irreducible elements which enter
the thin part of Outer space.

\medskip
\noindent {\bf Acknowledgments} The authors would like to thank Kasra
Rafi and Juan Souto for helpful conversations concerning this project.
Additionally, the authors thank the referee for a careful reading of this work with helpful suggestions.


\section{Preliminaries}\label{sc:prelim}


\subsection{Outer space}\label{ssc:cv}

We begin by fixing a generating set of the free group $F_k = \langle
x_1, \ldots, x_k \rangle$. Let $G$ be a simplicial graph, i.e., a
one-dimensional cell complex, with $\pi_1(G)$ isomorphic to
$F_k$. Let $R$ be a wedge of $k$ (oriented) circles, with each circle
identified to one of the generators of $F_k$. Then by a {\it marking}
of $G$ we mean a homotopy equivalence $\rho\co R \to G$. From the map
$\rho_\ast\co \pi_1(R) \to \pi_1(G)$, we then have an identification
of $F_k$ with $\pi_1(G)$. Given two marked metric graphs $\rho_1\co R
\to G_1$ and $\rho_2\co R \to G_2$, a map $f: G_1 \to G_2$ is a {\it
  change of marking} if it is linear on edges, and if $f \circ
\rho_1\co R \to G_2$ is homotopic to $\rho_2\co R \to G_2$. A {\it
  topological representative} of $\phi \in \Out F_k$ is a marked graph
$\rho \co R \to G$, together with a self homotopy equivalence $g\co G
\to G$, so that the homotopy equivalence $\rho\inv \circ g \circ \rho
\co R \to R$ induces $\phi$ on $\pi_1(R) = F_k$.

We denote by $cv_k$ the {\it unprojectivized (Culler--Vogtmann) Outer
  space} consisting of marked metric graphs $G$, where $\pi_1(G) =
F_k$ and the degree of every vertex of $G$ is at least 3. Two points
$\rho_1\co R \to G_1$ and $\rho_2\co R \to G_2$ in $cv_k$ are
equivalent if there is an isometry $\iota\co G_1 \to G_2$ so that
$\iota \circ \rho_1$ is homotopic to $\rho_2$. An alternate
description of $cv_k$ is as the space of free minimal isometric 
$F_k$--actions
on simplicial trees, and we will alternate freely between treating
Outer space as a space of trees and as a space of graphs.  There is a
right action of $\Out F_k$ given by precomposing the marking (or $F_k$--action) by a representative of the outer automorphism.  Outer space is defined as the
projectivization of $cv_k$: $CV_k = cv_k/\BR_{>0}$; it can be
identified with the subspace of $cv_k$ consisting of marked graphs whose
edge lengths sum to 1.

To simplify the notation for elements in Outer space, we denote a
marked metric graph $\rho\co R \to G$ simply by $G$. A {\it path} in
$G$ is a continuous map $\alpha\co I \to G$, where $I$ is an interval
of $\BR$. For convenience, and when it is clear from context, $\alpha$
may denote either the map or its image in $G$; while $[\alpha]$ will
denote the image of $\alpha$ after ``pulling it tight,'' 
i.e., the
image of any immersed homotopy (relative endpoints) representative of
$\alpha$.  We then use $L_G(\alpha)$ to denote the length of
$[\alpha]$ in $G$.  For an element $a \in F_k$, we write $\ell_G(a)$
to denote the minimal length of a loop in $G$ representing the
conjugacy class of $a$.  

For points
$x,y \in T$, we use $[x,y]$ to denote the image of the unique tight
path connecting $x$ and $y$ in $T$.  For an $F_k$--tree $T$ and an
element $a \in F_k$, we write $\ell_T(a)$ to denote the minimal
translation length of $a$ in $T$.  If $\ell_T(a) \neq 0$, then $a$ has
an invariant axis $T^{\la a \ra}$ and $\vol(T^{\la a \ra}/\la a \ra) =
\ell_T(a)$.  If $G$ is a graph with fundamental group $F_k$, then
$\widetilde{G}$ denotes the universal cover of $G$, with a chosen base
point so that there is an $F_k$--action on $\widetilde{G}$.  Clearly
$\ell_G = \ell_{\widetilde G}$.

Using the description as a space of tree actions, $cv_k$ is
topologized via the the \emph{axes topology}.  That is, a tree $T \in
cv_k$ is identified with a point in $\BR^{F_k}$ 
by the coordinates
$(\ell_T(g))_{g \in F_k}$ \cite{ar:CM87}.  Cohen and Lustig proved
that the space of very small actions on $\BR$--trees contains the
closure $\overline{cv}_k$ of $cv_k$ \cite{ar:CL95}.  The converse,
that every very small minimal action on an $\BR$--tree is the limit of
free minimal simplicial actions, was shown by Bestvina and Feighn
\cite{un:BF}. 
Recall that an action of $F_k$ on an $\BR$--tree is {\it
  very small} if arc stabilizers are trivial or maximal cyclic, and
the stabilizer of any tripod is trivial.  

Given two points $G_1$ and $G_2$ in the projectivized Outer space
$CV_k$, let $f\co G_1 \to G_2$ be a change of marking, and denote by
$\sigma(f)$ the maximal slope of $f$ (recall that $f$ is linear on
edges). We have the following proposition, due to White (see
\cite{un:Algom-Kfir,un:Bestvina}):
\begin{proposition}[White]\label{prop:loops}
  Let $G_1,G_2$ be two graphs in $CV_k$.  Then:
  \begin{equation*}
    \inf\{\sigma(f) \ | \ f\co G_1 \to G_2 \ \mbox{\rm change of marking} \} = 
    \sup_{1 \neq a \in F_k} \frac{\ell_{\widetilde{G}_2}(a)}{\ell_{\widetilde{G}_1}(a)}
  \end{equation*}
  Moreover both inf and sup are realized.
\end{proposition}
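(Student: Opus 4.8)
The plan is to prove both inequalities separately, and then observe that both extremal values are attained. For the inequality
\[
\inf\{\sigma(f)\} \geq \sup_{1 \neq a \in F_k} \frac{\ell_{\widetilde{G}_2}(a)}{\ell_{\widetilde{G}_1}(a)},
\]
first I would fix any change of marking $f\co G_1 \to G_2$ and any nontrivial $a \in F_k$. Let $\gamma$ be the immersed loop in $G_1$ realizing the conjugacy class of $a$, so $L_{G_1}(\gamma) = \ell_{G_1}(a)$. Since $f$ is linear on edges with all slopes at most $\sigma(f)$, the image path $f(\gamma)$ has length at most $\sigma(f)\,\ell_{G_1}(a)$; pulling $f(\gamma)$ tight only decreases length, so $\ell_{G_2}(a) \leq \sigma(f)\,\ell_{G_1}(a)$. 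Taking the supremum over $a$ and then the infimum over $f$ gives the desired bound. Passing to universal covers is harmless since $\ell_G = \ell_{\widetilde G}$.

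For the reverse inequality, the plan is to construct a single change of marking $f$ whose slope realizes the supremum. I would start from a homotopy equivalence $G_1 \to G_2$ respecting markings, homotope it to be linear on edges (possibly subdividing), and then run an optimization/folding procedure: wherever there is a ``tension'' vertex — one where the combinatorial map wraps edges in a way that can be pulled tighter — push the map to reduce the maximal slope. The standard argument (as in Bestvina's and Algom-Kfir's treatments) is that this process terminates at an optimal map $f$ for which the subgraph of maximal-slope edges, the \emph{tension subgraph}, has the property that at every vertex the turns taken by $f$ are \emph{legal} — no two maximal-slope directions get identified — which forces the existence of a loop $\gamma$ in $G_1$ lying entirely in the tension subgraph whose image $f(\gamma)$ is already immersed. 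For that loop, representing some $a \in F_k$, we get $\ell_{G_2}(a) = L_{G_2}(f(\gamma)) = \sigma(f)\,L_{G_1}(\gamma) = \sigma(f)\,\ell_{G_1}(a)$, so the supremum is at least $\sigma(f)$, hence at least the infimum. Combined with the first inequality, all four quantities coincide and the inf (realized by the optimal $f$) and the sup (realized by the loop $\gamma$) are attained.

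The main obstacle is the reverse inequality — specifically, establishing that the optimization terminates and that an optimal map has a tension subgraph supporting an immersed loop. Termination requires a well-chosen complexity (e.g., lexicographically ordering by maximal slope, then by the volume of the tension subgraph) and a verification that each folding move strictly decreases it; the subtlety is handling vertices of the tension subgraph where only one maximal-slope gate is present, which can be folded away without increasing the slope. Once the tension subgraph is \emph{legal} in the sense that every vertex it contains has at least two distinct maximal-slope gates, a counting argument (each vertex has out-degree $\geq 1$ within the tension subgraph in a way that can be continued) produces the required immersed loop. Since the excerpt attributes the result to White and cites \cite{un:Algom-Kfir, un:Bestvina}, in the paper itself I would state the inequality direction $\inf \geq \sup$ with the short argument above and cite those references for the optimal-map construction giving $\inf \leq \sup$ and the realization of both extrema.
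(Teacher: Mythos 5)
The paper does not actually prove this proposition: it is quoted as a known result of White, with the reader referred to Algom-Kfir's and Bestvina's treatments, so there is no internal proof to compare against. Your sketch is the standard argument from exactly those sources, and it is correct in outline: the inequality $\inf\geq\sup$ follows from the elementary Lipschitz estimate on an immersed loop exactly as you say, and the reverse inequality comes from producing an optimal map whose tension subgraph carries a legal (two gates at every vertex) loop mapping to an immersed loop, which simultaneously realizes both extrema. The one place where your route differs slightly from the cited treatments is in how existence of the optimal map is established: you propose a terminating discrete optimization with a lexicographic complexity, whereas the usual argument first gets an optimal map by an Arzel\`a--Ascoli compactness argument on the space of Lipschitz maps and then perturbs it so that the tension subgraph has at least two gates at each vertex. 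Both routes work; the compactness argument sidesteps the termination issue you flag as the main obstacle, at the cost of being less constructive. Since the paper itself defers to the references for this result, your plan of proving the easy direction in-line and citing the references for the optimal-map construction is entirely consistent with what the authors do.
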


For $G_1$ and $G_2$ in $\Out F_k$, let $\sigma(G_1,G_2)$ be the value
in Proposition \ref{prop:loops}. We define a function $d_L: CV_k
\times CV_k \to \BR_{\geq 0}$ by
\[
d_L(G_1,G_2) = \log \sigma(G_1,G_2).
\]
Its only failure to be a distance is that it is not symmetric; it is
not hard to construct examples of $G_1, G_2 \in CV_k$ with
$d_L(G_1,G_2) \neq d_L(G_2,G_1)$ (see \cite{un:A-KB}).  In spite of
this anomaly, we will refer to $d_L$ as the {\it Lipschitz metric}
on $CV_k$.  We remark that it is known that the minimal Lipschitz
constant, taken over all continuous maps $f\co G_1 \to G_2$ such that $f
\circ \rho_1$ is homotopic to $\rho_2$, is achieved by a map that is
linear on edges (\cite{un:Algom-Kfir, un:FM2}).

\begin{example}\label{ex:twist}
  We present an example of computing distances in $CV_k$ that will be
  relevant to those examples constructed in Section \ref{sc:example}.  Fix
  a basis $\CT = \CA \cup \{ t \}$ of $F_k$ and an element $c \in \la \CA \ra$
  that is cyclically reduced with respect to $\CT$.  Consider the
  Cayley tree $T$ and the marked graph $G = T/F_k$, metrized so that
  all edge lengths are equal to $1/k$.

  Let $\delta$ be the automorphism that sends $t$ to $ct$ and acts as
  the identity on $\la \CA \ra$.  Then there is a change of
  marking map $f \co G \to G\delta^n$ defined by subdividing the edge
  corresponding to $t$ into $n+1$ edges and sending each of the first
  $n$ edges over the edge path for $c$ and the last edge over the edge
  corresponding to $t$.  Therefore, the image of the edge $t$ has
  length:
  \[ n\ell_{G}(c) + \frac{1}{k} = \frac{nk\ell_G(c)
    + 1}{k} \] and hence the edge $t$ has been stretched by
  $nk\ell_G(c) + 1$.  Since the edge corresponding to $t$ is the
  only edge stretched and since it is mapped to a tight loop 
  we have that:
  \[ d_L(G,G\delta^n) = \log (nk\ell_G(c) + 1). \] In the
  terminology from the proof of Theorem 2.1 in \cite{un:Algom-Kfir},
  the loop $t$ is the subgraph $G_f \subset G$ and it is a legal loop.
  
  The automorphism $\delta$ is an example of a {\it Dehn twist automorphism}
  (see Section \ref{sc:example}); in this case corresponding to the Bass--Serre
  tree arising from the HNN-extension $\la \CA,c_0, t \,  | \, t\inv ct = c_0
  \ra$.  We refer to such a tree as a \emph{cyclic tree}.  For the case of
  a cyclic tree dual to an amalgamated free product $\la \CA \ra
  *_{\la c \ra} \la c,\CB \ra$ and its associated Dehn twist ($a
  \mapsto a$, $b \mapsto cbc\inv$), one can also show, using similar
  methods, that the distance from $G$ to $G\delta^n$ is approximately
  $\log n$.  In this case though, the obvious map sending edges
  corresponding to elements $b \in \CB$ to $c^nbc^{-n}$ is not the
  optimal map.  Instead one sends $b$ to $c^{n/2}bc^{-n/2}$ and edges
  corresponding to elements $a \in \CA$ to $c^{-n/2}ac^{n/2}$.

  We remark for use in Section \ref{sc:example}, that
  $d_L(G\delta^n,G) = d_L(G,G\delta^n)$.  Indeed, $d_L(G\delta^n,G) =
  d_L(G,G\delta^{-n})$ and the same argument as above shows that
  $d_L(G,G\delta^{-n}) = \log(nk\ell_G(c^{-1}) + 1)$.  But of course
  $\ell_G(c) = \ell_G(c^{-1})$.
\end{example}


\subsection{Bounded backtracking}\label{ssc:bbt}

Suppose that $f \co T \to T'$ is a continuous map, where $T$ and $T'$
are trees. We say that $f$ has \emph{bounded backtracking} if there is
a constant $C$ such that for any path $[x,y] \subset T$ from $x$ to
$y$ in $T$, and any $z \in [x,y]$, necessarily
$d_{T'}([f(x),f(y)],f(z)) \leq C$. 
We denote by $BBT(f)$ the minimal
such constant $C$. We note that for any given $T \in cv_k$ and $T' \in
\overline{cv}_k$, that any $F_k$--equivariant map $f \co T \to T'$ has
bounded backtracking. Moreover $BBT(f) \leq \Lip(f) \vol_T(T/F_k)$,
where $\Lip(f)$ is the Lipschitz constant of the map $f$
\cite{ar:BFH97}. In particular, if $T$ and $T' $ are contained in the
projectivized space $CV_k$, then $BBT(f) \leq \Lip(f)$.

For a path $\alpha \subset T$, denote by $\alpha\dagger_L$ the path
obtained by deleting the extremal paths of length $L$. The following
is an easy consequence of bounded backtracking.

\begin{lemma}\label{lm:surviving-subsegment}
  Suppose that $T,T' \in \overline{cv}_k$, that $f\co T \to T'$ is an
  $F_k$--equivariant map that has bounded backtracking, and that
  $\ell\co \BR \to T$ is a parametrized geodesic.  If $L > BBT(f)$,
  and if for some interval $I \subset \BR$, a tight path $\alpha
  \subset T'$ is contained in $[f(\ell(I))]\dagger_L$, then
  necessarily $\alpha \subset [f(\ell(I'))]$ for any interval $I'
  \supset I$.
\end{lemma}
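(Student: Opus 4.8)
The plan is to track the path $\alpha$ back from $[f(\ell(I'))]$ to $[f(\ell(I))]$ using the bounded backtracking property and the fact that deleting extremal segments of length $L > BBT(f)$ gives us a ``buffer'' that protects $\alpha$ from being cancelled. Write $I = [s,t]$ and let $I' = [s',t'] \supset I$, so $s' \le s \le t \le t'$; set $x = \ell(s)$, $y = \ell(t)$, $x' = \ell(s')$, $y' = \ell(t')$. Since $\ell$ is a geodesic, $x$ lies on $[x',y']$ and $y$ lies on $[x',y']$, so $[x',y'] = [x',x] \cup [x,y] \cup [y,y']$ as a concatenation of geodesic segments (possibly degenerate). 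First I would observe that it suffices to handle the two one-sided extensions separately — extend $I$ on the right to $[s,t']$, then on the left to $[s',t']$ — so by symmetry I may assume $I' = [s,t']$ with only the right endpoint moved.

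The key step is the following. Let $C = BBT(f)$, so $C < L$. By the definition of $BBT$, for the path $[x,y']$ passing through $y$, the point $f(y)$ lies within distance $C$ of $[f(x),f(y')]$; equivalently, the tight path $[f(x),f(y')]$ is obtained from the concatenation of $[f(x),f(y)]$ and $[f(y),f(y')]$ by cancelling a back-and-forth portion, and this cancellation eats at most a terminal segment of length $C$ off $[f(x),f(y)] = [f(\ell(I))]$ (and correspondingly an initial segment of $[f(y),f(y')]$). Therefore the tight path $[f(\ell(I'))] = [f(x),f(y')]$ contains the entire portion of $[f(\ell(I))]$ except possibly a terminal segment of length $\le C$. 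Since by hypothesis $\alpha \subset [f(\ell(I))]\dagger_L$, i.e.\ $\alpha$ avoids the extremal length-$L$ pieces of $[f(\ell(I))]$ on both ends, and $L > C$, the path $\alpha$ survives intact inside $[f(x),f(y')] = [f(\ell(I'))]$. Applying the same argument to the left extension (where now the length-$L$ buffer on the \emph{left} end of $[f(\ell(I))]$ protects $\alpha$) completes the proof.

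The one point requiring a little care — and the main (mild) obstacle — is the claim that the tightening of the concatenation $[f(x),f(y)] \cdot [f(y),f(y')]$ only removes a terminal segment of length at most $C$ from the first factor. This is where bounded backtracking must be used in its precise form rather than just as a ``fellow-traveling'' statement: one should argue that if more than a length-$C$ initial segment of $[f(y),f(y')]$ were cancelled, then some point $z$ strictly between $y$ and $y'$ on $[x,y']$ (namely a point mapping near the cancelled portion) would have $f(z)$ at distance $> C$ from $[f(x),f(y')]$, contradicting $BBT(f) \le C$; and symmetrically for the terminal segment of $[f(x),f(y)]$. A clean way to phrase this: the Gromov product of $f(x)$ and $f(y')$ based at $f(y)$, measured in $T'$, is at most $C$, which is exactly the length of the cancelled back-and-forth piece. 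Everything else is bookkeeping about concatenating geodesics in the tree $T'$.
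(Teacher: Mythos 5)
Your proof is correct, but it runs in the opposite logical direction from the paper's. You argue directly: reduce to a one\--sided extension $I'=[s,t']$, observe that in the tree $T'$ the tightening of $[f(x),f(y)]\cdot[f(y),f(y')]$ cancels exactly the segment $[m,f(y)]$ where $m$ is the median of $f(x),f(y),f(y')$, and bound its length by $d_{T'}(f(y),[f(x),f(y')])\leq BBT(f)<L$ using the BBT inequality applied to the triple $x,y,y'$ (with $y$ between $x$ and $y'$ on the geodesic $\ell$); the $L$\--buffer then shields $\alpha$. The paper instead argues by contradiction: if $\alpha\not\subset[f(\ell(I'))]$, then since $T'$ is a tree some point $x''\in I'-I$ must map onto the endpoint $f(\ell(x'))$ of $\alpha$ with $x'\in I$, and then the path $[\ell(x'),\ell(x'')]$ together with the intermediate point $\ell(x)$ violates BBT, because $f(\ell(x))$ is at distance at least $L>BBT(f)$ from the degenerate tight path $[f(\ell(x')),f(\ell(x''))]$. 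The two arguments are contrapositives of essentially the same estimate, applied to different triples of points: yours quantifies the cancellation at the endpoints of $I$ and is the more standard ``bounded cancellation'' computation, at the cost of the median bookkeeping and the two\--step one\--sided reduction (where, as you correctly note, one must check that after the right extension $\alpha$ still retains its length\--$L$ buffer at the $f(x)$ end of the intermediate path); the paper's version handles both ends at once via the without\--loss\--of\--generality and needs no cancellation computation, at the cost of a slightly terse topological step producing the point $x''$. Both are complete proofs.
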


\begin{proof}
  Let $I = [x,y] \subset \BR$ and assume that the hypotheses of the lemma hold, so that $d_{T'}(f(\ell(x)),\alpha) \geq L$ and $d_{T'}(f(\ell(y)),\alpha) \geq L$.  Next let $x' \in I$ be such
  that $f(\ell(x'))$ is the endpoint of $\alpha$ closest to
  $f(\ell(x))$, and let $y' \in I$ be such that $f(\ell(y'))$ is the
  endpoint of $\alpha$ closest to $f(\ell(y))$.

  Now suppose that $\alpha \not\subset [f(\ell(I'))]$ for some
  interval $I'$ that contains $I$.  As $T'$ is a tree, either
  there exists an $x'' \in I' - I$ such that $f(\ell(x'')) =
  f(\ell(x'))$ or there exists an $y'' \in I' - I$ such that
  $f(\ell(y'')) = f(\ell(y'))$; without loss of generality, we
  assume the former. Then the path $[\ell(x'),\ell(x'')]
  \subset T$ and the point $\ell(x) \in [\ell(x'),\ell(x'')]$ violate
  bounded backtracking, as $$d_{T'}([f(\ell(x'),f(\ell(x'')],f(\ell(x)))
  \geq L > BBT(f).$$
\end{proof}


\subsection{Irreducible elements and train-tracks}\label{ssc:iwip-tt}

Let $G$ be a graph. A {\it turn} is an unordered pair of oriented edges that share a common initial vertex. Letting $\bar{e}$ denote the edge $e$ with opposite orientation, we say that an edge path
 $\alpha$ {\it crosses} a turn $\{ e_1, e_2 \}$ if it contains an occurrence of either $\bar{e}_1 {e}_2$ or $\bar{e}_2 e_1$.

Let $g\co G \to G$ be a homotopy equivalence that is linear on edges, mapping edges to edge paths. Then $g$ induces a map on the set of turns of $G$, as follows. Let $v$ be the common initial vertex of the edges of a turn $\{ e_1,e_2 \}$. Some initial segment of $e_1$ is mapped onto an edge $e_1'$ based at $g(v)$, while some initial segment of $e_2$ onto an edge $e_2'$, also based at $g(v)$. We assign 
$g(\{ e_1, e_2 \}) = \{ e_1',e_2' \}$.

The homotopy equivalence $g\co G \to G$
is a {\it train-track map} if there is a collection
$\CL\CT$ of turns such that: 
\begin{enumerate}

\item $\CL\CT$ is closed under iteration of $g$ and

\item for an edge $e \subset G$, any turn crossed by $g(e)$
  is in $\CL\CT$.

\end{enumerate}
The unordered pairs of $\CL\CT$ are called \emph{legal turns}, while
an unordered pair of turns not in $\CL\CT$
is called an \emph{illegal turn}.  A path is \emph{legal} if it only
crosses legal turns. We will regularly refer to the underlying graph $G$ 
as a ``train-track.''

An element of $\phi \in \Out F_k$ is {\it reducible} if some conjugacy
class of a proper free factor of $F_k$ is $\phi$--periodic; otherwise
$\phi$ is {\it irreducible}.  Bestvina and Handel proved that every
irreducible element of $\Out F_k$ has a topological representative
that is a train-track map \cite{ar:BH92}. If $g\co G \to G$ 
is a train-track map representing an
irreducible element of $\Out F_k$, there is a metric on $G$ such that
$g$ linearly expands each edge of $G$ by the same factor $\lambda$,
called the \emph{expansion factor}. This factor is the
Perron--Frobenius eigenvalue of the transition matrix for $g$; a
positive eigenvector for this eigenvalue specifies the metric on $G$.

All proper powers of a {\it fully irreducible} element $\phi$ of $\Out
F_k$ are irreducible. A fully irreducible element $\phi$ has the
property that its minimal displacement in the Lipschitz metric is
related to its expansion factor $\lambda_\phi$ by
\[
\min_{G \in CV_k}d_L(G,G\phi) = \log(\lambda_\phi).
\]
Moreover, this minimum is realized by a train-track map for $\phi$.
This relationship is one reason we choose not to symmetrize the
Lipschitz metric, for typically the expansion factor of a fully
irreducible element is not equal to that of its inverse.  See for
example \cite{ar:HM07}. 

For a fully irreducible element $\phi \in \Out F_k$ and any tree
$T \in CV_k$, the sequence $T\phi^n$ has a well-defined limit 
$T_+(\phi)$ in
$\overline{CV}_k$, called the \emph{stable tree} of $\phi$ \cite{ar:BFH97,ar:LL03}.  
The {\it unstable tree}  for
$\phi$, denoted by $T_-(\phi)$, is the stable tree for
$\phi\inv$, i.e., $T_-(\phi) = T_+(\phi\inv)$.  For an explicit
description see \cite{un:HM}.  We further note that if $T \in
\overline{CV}_k - \{ T_-(\phi)\}$, then $T\phi^n$ converges to
$T_+(\phi)$ \cite{ar:LL03}.

\subsection{Geodesics in Outer space}\label{ssc:axes}

Next, for an interval $I \subset \BR$, we describe paths $I \to CV_k$ known as {\it folding lines}. We will be concerned with two types
of such paths: those which connect two points $G_1$ and $G_2$ in the
interior $CV_k$, and those which are axes of fully irreducible elements. The latter
were studied by Algom-Kfir in \cite{un:Algom-Kfir}.

For $G_1, G_2 \in CV_k$, let $f\co G_1 \to G_2$ be a change of marking
map whose Lipschitz constant realizes $\sigma(G_1,G_2)$. Find a path
$\tilde{\alpha}_1$ based at $G_1$ contained in an open simplex of
(unprojectivized) $cv_k$, along which edges of $G_1$ shrink just until
the map induced by $f$ stretches every edge of the resulting graph by
$\sigma(G_1,G_2)$; note that the lengths of those edges of $G_1$ that
are stretched by exactly $\sigma(G_1,G_2)$ do not change along
$\tilde{\alpha}_1$. Let the endpoint of the corresponding path
$\alpha_1$ in (projectivized) $CV_k$ be $H_1$, with the change of
marking map $h\co H_1 \to G_2$ induced by $f$. We choose a
parameterization $\alpha_1\co [0,d_L(G_1,H_1)] \to CV_k$ by arclength.

 Now we construct a path $\alpha_2: [0,d_L(H_1,G_2)] \to CV_k$ with
 $\alpha_2(0) = H_1$ and $\alpha_2(d_L(H_1,G_2)) = G_2$. First
 subdivide the edges of $H_1$ to obtain a graph $H_1'$ so that the
 preimage of vertices in $G_2$ consists of vertices in $H_1'$, while the
 induced map $h \co H_1' \to G_2$ remains cellular. Select a vertex
 $v$ of $H_1'$ at which two edges $e_1$ and $e_2$ identified by $h$
 are based. Let $H(t)$ be the graph obtained from $H_1'$ by folding
 the initial segments of length $(1-e^{-t})$ of $e_1$ and $e_2$; let
 $\alpha_2(t)$ be the graph in $CV_k$ obtained from $H(t)$ by
 ``forgetting'' valence 2 vertices. Note that $d_L(H_1,\alpha_2(t)) =
 t$. Define $\alpha_2(t)$ in this way until $e_1$ and $e_2$ are
 completely identified; then repeat the above with the resulting
 graph. Continue this process until the map induced by $f$ is an
 immersion in $G_2$; this is a finite process as $H_1'$ has a finite
 number of vertices. Note that the immersion is necessarily an
 isometry as every edge is stretched by the same factor; thus the fold
 line just constructed connects $H_1$ to $G_2$ in $CV_k$. Finally,
 let $\alpha$ be the concatenation of $\alpha_1$ and $\alpha_2$; a
 path based at $G_1$ and terminating at $G_2$. Francaviglia and
 Martino \cite[Theorem 5.5]{un:FM2} proved that $\alpha \co
 [0,d_L(G_1,G_2)] \to CV_k$ is a geodesic.

 We can now describe a geodesic axis for a fully irreducible element
 $\phi$ of $\Out F_k$. Let $\lambda_\phi$ be the expansion factor of
 $\phi$, and let $G$ be a train-track. To obtain a parametrized
 geodesic axis
 for $\phi$, first find a folding path $\alpha: [0,\log(\lambda_\phi)]
 \to CV_k$ as above, connecting $G$ to $G\phi$. Then define the graph
 $\alpha(t) = \alpha(t-n(t) \log(\lambda_\phi)) \phi^{n(t)}$, where $n(t)$
 is the integer $\lfloor \frac{t}{\log(\lambda_\phi)} \rfloor$, and
 let $\mathcal{L}_\phi$ denote the image of $\alpha$. Algom-Kfir
 \cite[Proposition 3.5]{un:Algom-Kfir} showed that 
 $\alpha: \BR \to CV_k$ 
 is a geodesic parametrized by arclength.


\subsection{Nielsen and vanishing paths}\label{ssc:nvp}

Suppose $G$ is a graph with a homotopy equivalence $g \co G \to G$. We
make note of two special types of paths in $G$ and collect some relevant results that will 
be useful for us in the sequel.

First, a path $\alpha \subset G$ is a \emph{Nielsen path} if $[g(\alpha)] =
[\alpha]$; it is \emph{indivisible} if it is not a concatenation of nontrivial Nielsen paths, so that  
any Nielsen path is a concatenation of indivisible Nielsen paths. 

\begin{theorem}[\cite{un:HM}, Corollary 2.14]\label{th:Np-or-vp}
  Suppose that $\phi \in \Out F_k$ is fully irreducible with stable tree $T_+ = T_+(\phi)$, and
  that $g \co G \to G$ is a train-track representative.  Then there is a
  surjective $F_k$--equivariant map $f_g \co \widetilde{G} \to T_+$
  such that if $[x,y] \subset \widetilde{G}$ is a lift of a path
  $\alpha \subset G$ and $f_g(x) = f_g(y)$, then for some $m \geq 0$,
  the path $[g^m(\alpha)]$ is either a Nielsen path or trivial.
\end{theorem}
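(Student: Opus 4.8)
The plan is to take for $f_g$ the canonical quotient map from $\widetilde{G}$ onto the direct-limit model of the stable tree, and then to reduce the statement to the standard growth dichotomy for the train-track map $g$. Concretely, I would metrize $G$, and hence $\widetilde{G}$, by a Perron--Frobenius eigenvector for the transition matrix of $g$, so that a lift $\tilde g \co \widetilde{G} \to \widetilde{G}$ stretches every edge by the expansion factor $\lambda = \lambda_\phi$. For $x,y \in \widetilde{G}$ put $\ell_n(x,y) = d_{\widetilde{G}}(\tilde g^n(x), \tilde g^n(y))$. Since the image of the geodesic $[x,y]$ under $\tilde g$ is a path of length $\lambda\, d_{\widetilde{G}}(x,y)$, we have $\ell_{n+1} \le \lambda\,\ell_n$, so $\lambda^{-n}\ell_n(x,y)$ is non-increasing and $d_\infty(x,y) := \lim_n \lambda^{-n}\ell_n(x,y)$ exists; a standard check shows $d_\infty$ is an $F_k$--invariant pseudometric and that the quotient of $\widetilde{G}$ by the relation ``$d_\infty = 0$'', with the induced metric, is an $F_k$--tree equivariantly isometric to $T_+$ (this is precisely the usual construction of the stable tree, cf.\ \cite{ar:BFH97,un:HM}). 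I would let $f_g\co \widetilde{G} \to T_+$ be this quotient map; it is surjective, $F_k$--equivariant, and $f_g(x) = f_g(y)$ if and only if $d_\infty(x,y) = 0$.

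The first real observation I would make is that $f_g$ is isometric on legal paths: since $g$ carries legal turns to non-degenerate turns, $\tilde g$ sends a legal tight path to a legal tight path of exactly $\lambda$ times the length, so for legal $[x,y]$ one has $\ell_n(x,y) = \lambda^n L_{\widetilde{G}}([x,y])$ and hence $d_\infty(x,y) = L_{\widetilde{G}}([x,y])$; in particular a nontrivial legal path is never collapsed. Given a lift $[x,y]$ of $\alpha \subset G$ with $f_g(x) = f_g(y)$, write $\beta_n = [g^n(\alpha)]$, so that $L_G(\beta_n) = \ell_n(x,y)$ and $\lambda^{-n}L_G(\beta_n) \to 0$. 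The theorem thus reduces to showing that some $\beta_m$ is trivial or a Nielsen path.

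For this I would establish the usual growth dichotomy. Let $C$ be the bounded-cancellation constant of $g$ and set $C_{\mathrm{crit}} = 2C/(\lambda - 1)$. If some $\beta_n$ contained a legal subpath $\mu$ with $L_G(\mu) > C_{\mathrm{crit}}$, then $[g(\mu)]$ is legal of length $\lambda L_G(\mu)$, and retightening $\beta_{n+1}$ at the (at most two) adjacent illegal turns trims at most $C$ from each end of $[g(\mu)]$; inductively $\beta_{n+m}$ contains a legal subpath of length at least $\lambda^m(L_G(\mu) - C_{\mathrm{crit}}) + C_{\mathrm{crit}}$, which stays above $C_{\mathrm{crit}}$ and so is never consumed, giving $\lambda^{-(n+m)}L_G(\beta_{n+m}) \ge \lambda^{-n}(L_G(\mu) - C_{\mathrm{crit}}) > 0$ for all $m$, contrary to $d_\infty(x,y) = 0$. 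Hence every legal subpath of every $\beta_n$ has length at most $C_{\mathrm{crit}}$; as the number of illegal turns of $\beta_n$ is non-increasing in $n$, hence bounded by the number $I_0$ for $\beta_0$, and $\beta_n$ is a concatenation of at most $I_0 + 1$ legal subpaths, $L_G(\beta_n) \le (I_0 + 1)C_{\mathrm{crit}}$ for all $n$. If $L_G(\beta_m) = 0$ for some $m$, then $[g^m(\alpha)]$ is trivial. Otherwise the $\beta_n$ are nontrivial tight paths of uniformly bounded length whose illegal-turn count stabilizes, so the sequence $(\beta_n)$ is eventually periodic, say $\beta_{m+p} = \beta_m$ with $p \ge 1$; such a periodic path is not legal (a nontrivial legal path grows under $g$), so it carries an illegal turn, and by the classification of periodic indivisible Nielsen paths for a train-track representative of a fully irreducible automorphism \cite{ar:BH92} one concludes that $p$ may be taken equal to $1$, i.e.\ $[g^m(\alpha)]$ is a Nielsen path.

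I expect the main obstacle to be the growth dichotomy itself: this is where the bounded-cancellation estimate does the real work, and one must carefully track how the maximal legal subpaths of $\beta_n$ evolve under iteration — stretched by $\lambda$, trimmed by at most $C$ at each illegal turn, and never eaten through once they exceed $C_{\mathrm{crit}}$. The secondary delicate point is upgrading ``eventually periodic'' to ``eventually a Nielsen path'', which relies on the finiteness and structure of the indivisible Nielsen paths of a fully irreducible automorphism.
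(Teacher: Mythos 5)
First, note that the paper does not actually prove this statement: it is imported verbatim as \cite{un:HM}, Corollary~2.14, so there is no internal proof to compare against. What you have done is supply a from-scratch proof of a quoted black box. Your overall route is the standard one: realize $T_+$ as the quotient of $\widetilde{G}$ by the pseudometric $d_\infty = \lim_n \lambda^{-n}\ell_n$ (this is exactly how the stable tree is built in \cite{ar:BFH97}), observe that $f_g(x)=f_g(y)$ forces $\lambda^{-n}L_G(\beta_n)\to 0$ for $\beta_n = [g^m(\alpha)]$, and then run the bounded-cancellation/critical-constant dichotomy. That dichotomy — no legal subpath of any $\beta_n$ can exceed $2C/(\lambda-1)$, whence $L_G(\beta_n)$ is uniformly bounded — is set up and executed correctly.

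There is, however, a genuine gap at the step ``the $\beta_n$ are nontrivial tight paths of uniformly bounded length whose illegal-turn count stabilizes, so the sequence $(\beta_n)$ is eventually periodic.'' This does not follow from what you have established: the set of tight paths in $G$ of length at most $(I_0+1)C_{\mathrm{crit}}$ is uncountable, since endpoints may lie anywhere in $G$, and the endpoints of $\beta_n$ are the $g$-orbit points $g^n(p), g^n(q)$ of the endpoints of $\alpha$, which have no a priori reason to be eventually periodic — indeed, showing that they are eventually fixed is essentially equivalent to the theorem (a Nielsen path has $g$-fixed endpoints). Closing this requires the actual Nielsen-path machinery of \cite{ar:BH92} and \cite{un:HM}: tracking how the maximal legal pieces and the illegal turns of $\beta_n$ evolve, extracting a limit which is a periodic Nielsen path, and using expansion on legal segments to upgrade convergence to eventual equality. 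A second, related weak point is the final upgrade from period $p$ to period $1$: for an \emph{arbitrary} train-track representative of a fully irreducible automorphism, periodic indivisible Nielsen paths of period greater than one can occur (eliminating them is precisely the purpose of passing to stable or rotationless representatives), so ``eventually periodic'' does not immediately give ``eventually a Nielsen path'' in the sense of the paper's definition $[g(\alpha)]=[\alpha]$, and your appeal to ``the classification'' needs to be made precise or replaced by a direct citation of the structure theory. Everything before this point is sound; these two steps are where the substance of the quoted result lives.
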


\begin{definition}\label{def:ncover}
  Let $a \in F_k$ and $T \in \overline{cv}_k$ be such that $\ell_T(a)
  \neq 0$. We say that a path $\alpha \subset T$ (possibly infinite)
  \emph{n--covers} $a$ if $L_T(\alpha \cap T^{\la a \ra}) \geq
  n\ell_T(a)$.  In other words, $\alpha$ overlaps with a segment of
  the axis of $a$ for length at least $n\ell_T(a)$; there is a
  point $x \in \alpha$ such that $a^nx \in \alpha$ as well.

  Similarly, given $G \in CV_k$, we say a path $\alpha \subset G$
  \emph{n--covers} $a$ if a lift of $\alpha$ to $\widetilde{G}$
  $n$--covers $a$. In other words, $\alpha$ decomposes into $\alpha =
  \beta \cdot \alpha_0 \cdot \beta'$, where $\alpha_0$ is the loop
  representing the conjugacy class of $a^n$.
\end{definition}

\begin{lemma}\label{lem:onlyiNP}
  Let $\phi$ and $g\co G \to G$ be as in Theorem
  \ref{th:Np-or-vp}. Suppose that a Nielsen path $\alpha$ in $G$
  $n$--covers $a$, for some $a \in F_k$ with $\ell_{T_+}(a) > 0$ and
  $n \geq 2$. Then there is a subpath $\alpha_0$ of $\alpha$ which
  $n$--covers $a$ and which is contained in $\alpha' \dagger_\epsilon$ for some indivisible Nielsen path $\alpha'$ and $\epsilon > 0$.
\end{lemma}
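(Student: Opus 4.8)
The plan is to localize the long $a$--periodic arc inside a single indivisible Nielsen path and then trim its ends. Since every Nielsen path is a reduced concatenation of indivisible ones, write $\alpha=\alpha_1\cdots\alpha_m$, and recall that each $\alpha_i$ has the form $\gamma_i\overline{\delta_i}$ with $\gamma_i,\delta_i$ legal and exactly one illegal turn, occurring where $\gamma_i$ meets $\overline{\delta_i}$. Passing to a lift $\widetilde\alpha\subset\widetilde{G}$, the hypothesis that $\alpha$ $n$--covers $a$ produces a sub-arc of $\widetilde\alpha$ carried by the axis $\widetilde{G}^{\langle a\rangle}$ of length at least $n\ell_G(a)$; inside it fix $J$, a sub-arc of length exactly $n\ell_G(a)$ (if the overlap is strictly longer we keep $J$ central, as the excess will feed the final $\epsilon$). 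The goal is to show $J$ lies in the interior of a single $\alpha_i$.

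The main tool is the $F_k$--equivariant surjection $f_g\co\widetilde{G}\to T_+$ of Theorem~\ref{th:Np-or-vp}, which has bounded backtracking in the sense of Section~\ref{ssc:bbt} and, by its construction, intertwines a lift $\tilde g$ of $g$ with the homothety of $T_+$ by the expansion factor $\lambda$ and is an isometry on legal paths. Two consequences: (i) $f_g$ identifies the two endpoints of any Nielsen path $\beta$, since for a lift of $\beta$ the $T_+$--distance between the images of the endpoints is scaled by $\lambda^m$ under $f_g\circ\tilde g^m$ yet bounded by $L_G([g^m(\beta)])=L_G(\beta)$, hence is $0$; applied to each $\alpha_i$, all subdivision points of $\alpha=\alpha_1\cdots\alpha_m$ map to one point $P\in T_+$, and bounded backtracking then confines $f_g(\widetilde\alpha)$ to the ball $B(P,BBT(f_g))$, so each leg $\gamma_i,\delta_i$ has length at most $BBT(f_g)$ and each indivisible Nielsen path has length at most $2\,BBT(f_g)$. (ii) Since $\ell_{T_+}(a)>0$, $f_g$ carries $\widetilde{G}^{\langle a\rangle}$ onto the genuine axis $T_+^{\langle a\rangle}$ with net drift $\ell_{T_+}(a)$ per period, so $f_g(J)$ contains an arc of $T_+^{\langle a\rangle}$ of length $n\ell_{T_+}(a)$; together with (i) this already forces $n\ell_{T_+}(a)\le 2\,BBT(f_g)$, so the hypotheses only bite for $n$ in a controlled range.

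It remains—and this is the step I expect to be the main obstacle—to see that $J$ does not straddle a subdivision point, and to secure $\epsilon>0$. For the localization I would argue that $J$ is legal: if it crossed an illegal turn, $a$--periodicity would force it to cross a further $\langle a\rangle$--translate of that turn as well, and one shows this is incompatible with $f_g$ advancing monotonically by $n\ell_{T_+}(a)$ across $f_g(J)$ while all of $\widetilde\alpha$ is confined to $B(P,BBT(f_g))$; then $f_g$ is an isometric embedding along $J$, so if $J$ passed through a subdivision point $p$ the equality $f_g(p)=P$ would place $P$ off the monotone image $f_g(J)\subset T_+^{\langle a\rangle}$, a contradiction. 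Hence $J$ lies in a single leg, say $\gamma'$, of a single indivisible Nielsen path $\alpha'=\alpha_i$. Finally $J$ cannot reach the illegal turn of $\alpha'$ (it is legal), nor can it reach an endpoint $p$ of $\alpha'$: such a $p$ is fixed by an appropriate lift of a power of $g$, so iterating would expand the legal germ of $\widetilde{G}^{\langle a\rangle}$ at $p$ by $\lambda^{km}$, producing an ever longer \emph{legal} initial segment of the legal path $g^{km}(\gamma')$ which cannot cancel in the tightening of $g^{km}(\alpha')$, contradicting $[g^{km}(\alpha')]=[\alpha']$. Thus $J$ is bounded away from $\partial\alpha'$ by some $\epsilon>0$, and we take $\alpha_0=J$. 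The real difficulty I anticipate is the localization step—ruling out illegal turns in $J$, especially when $n=2$ where there is least room to exploit periodicity—where the finer structure of indivisible Nielsen paths of a fully irreducible automorphism, and the finiteness of their list, are likely needed.
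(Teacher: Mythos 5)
Your setup (the decomposition into indivisible Nielsen paths, the observation that $f_g$ identifies the endpoints of every Nielsen path so that all subdivision points map to a single point $P$, and the confinement of $f_g(\widetilde\alpha)$ to the ball $B(P,BBT(f_g))$) is sound, but the step you yourself flag as the main obstacle --- localizing $J$ inside a single indivisible Nielsen path --- is where the whole proof lives, and your sketch of it does not close. Two specific problems. First, the claim that an illegal turn in $J$ is ``incompatible with $f_g$ advancing monotonically'' is unsubstantiated: an illegal turn on the axis of $a$ is perfectly consistent with $\ell_{T_+}(a)>0$ (only a bounded amount cancels at each illegal turn under iteration), and the counting argument you gesture at (periodicity forces a second illegal turn, versus one illegal turn per indivisible Nielsen path) presupposes that $J$ already lies in a single indivisible Nielsen path, which is exactly what you are trying to prove. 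Second, even granting that $f_g|_J$ is an isometric embedding onto an arc of $T_+^{\langle a\rangle}$, a subdivision point $p\in J$ with $f_g(p)=P$ yields no contradiction: it places $P$ \emph{on} $f_g(J)$, not off it, and the only quantitative consequence of your confinement estimate is the inequality $n\ell_{T_+}(a)\le 2\,BBT(f_g)$, which, as you yourself note, is merely a constraint on the hypotheses rather than an absurdity.

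The missing idea is to exploit what a subdivision point \emph{is}: a $g$-fixed point. The paper takes the shortest subpath $\alpha_0$ that $n$--covers $a$ and argues that if it is not interior to a single indivisible Nielsen path, it must contain an endpoint $p$ of some $\alpha_i$, hence (since $\alpha_0$ traces the loop of $a$ at least $n\ge 2$ times) it meets $p$ repeatedly, and the block $\alpha_i\cdots\alpha_j$ between two such occurrences is a \emph{closed} Nielsen path representing the conjugacy class of $a$; a closed Nielsen path is a $g$-periodic loop, so $\ell_{T_+}(a)=0$, contradicting the hypothesis. That short combinatorial argument downstairs in $G$ never needs $f_g$, bounded backtracking, or legality. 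Your route could in principle be completed by inserting this same fixed-point/periodicity argument, but as written the metric machinery in $T_+$ does not substitute for it.
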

\begin{proof}
  Suppose that $\alpha_0$ is a shortest subpath of $\alpha$ that
  $n$--covers $a$, but is not contained in the interior of an indivisible Nielsen
  path. 
  
  We can express $\alpha$ as a concatenation $\alpha_1 \alpha_2 \ldots
  \alpha_r$ of indivisible Nielsen paths $\alpha_i$,
  $i=1,\ldots,r$. The $g$-fixed points of $\alpha$ are precisely the
  endpoints of the $\alpha_i$'s. Then since $\alpha_0$ is not
  contained in an indivisible Nielsen path, it must contain one of one
  of these fixed points $p$. It therefore contains at least $n$ copies
  of the fixed point $p$. Therefore some sequence $\alpha_i \ldots
  \alpha_j$ forms a Nielsen path and corresponds to the
  conjugacy class of $a$. A closed Nielsen path corresponds to a
  periodic loop. It follows that $\ell_{T_+}(a) = 0$, contradicting
  the hypothesis on $a$.
  \end{proof}

A path $\alpha \subset G$ is a \emph{vanishing path} of $g$ if $[g^m(\alpha)]$ is trivial (i.e., is a point) for some $m \geq 1$. We record the following observation from \cite{ar:BBC10}: 

\begin{lemma}\label{lem:vp-in-Np}
Let $\phi$ and $g\co G \to G$ be as in Theorem \ref{th:Np-or-vp}, and suppose that $\alpha$ is an indivisible Nielsen path. Then any subpath $\beta \subseteq \alpha \dagger_\epsilon$ for some $\epsilon >0$ 
is contained in a vanishing path. 
\end{lemma}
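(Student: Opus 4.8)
The plan is to use the structure theory of indivisible Nielsen paths, together with the metric expansion properties of $g$, to fold up an explicit path that contains $\beta$. Recall \cite{ar:BH92} that an indivisible Nielsen path crosses exactly one illegal turn, so $\alpha = \mu_1\overline{\mu_2}$ with $\mu_1,\mu_2$ legal paths sharing the endpoint $p$ at which the illegal turn is crossed, and, since $[g(\alpha)] = [\alpha]$, the other endpoints $v_1,v_2$ of $\alpha$ are fixed by $g$. As $g$ does not fold legal paths, $g(\mu_i)$ is already tight, and $[g(\alpha)] = \alpha$ forces $g(\mu_1)$ and $g(\mu_2)$ to have a nontrivial common terminal segment $\nu$ whose removal leaves precisely $\mu_1$ and $\mu_2$: otherwise $[g(\alpha)]$ would be either longer than $\alpha$ or without an illegal turn, and in any case the unique illegal turn of $\alpha$ must be the one separating the two legal halves of $[g(\alpha)]$, which pins the decomposition down. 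Hence $[g(\mu_i)] = \mu_i\nu$ with $\nu$ independent of $i$, and iterating, $[g^m(\mu_i)] = \mu_i\nu_m$ for a common legal path $\nu_m$, all $m\ge1$; comparing lengths (a legal path is stretched by exactly the expansion factor $\lambda$) gives $|\mu_1| = |\mu_2|$ and $|\nu_m| = (\lambda^m-1)|\mu_1|$.

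Given $\beta\subseteq\alpha\dagger_\epsilon$, I would first reduce to the case that $\beta$ meets $p$: enlarging $\beta$ only makes the conclusion easier, so if $\beta$ lies in $\mu_1$ or in $\overline{\mu_2}$ I enlarge it slightly past $p$, and since $\beta\subseteq\alpha\dagger_\epsilon$ the resulting path still meets each $\mu_i$ in a terminal segment of length strictly less than $|\mu_i|$. So write $\beta = \beta_1\overline{\beta_2}$ with $\beta_i$ a terminal segment of $\mu_i$ and $|\beta_i|<|\mu_i|$; say $|\beta_1|\ge|\beta_2|$. As $|\beta_1|<|\mu_1| = |\mu_2|$, extend $\beta$ along $\overline{\mu_2}$ to $\beta' = \beta_1\overline{\beta_2'}$, where $\beta_2'$ is the terminal segment of $\mu_2$ of length $|\beta_1|$; then $\beta\subseteq\beta'$ and the two legal halves of $\beta'$ have equal length $|\beta_1|$.

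To see that $\beta'$ is a vanishing path, choose $m$ with $\lambda^m|\beta_1|\le|\nu_m|$, possible since $|\nu_m|/\lambda^m\to|\mu_1|>|\beta_1|$. Pass to $\widetilde{G}$ and pick the lift $\tilde g$ of $g$ fixing a lift $\tilde v_1$ of $v_1$; then $\tilde g$ also fixes the corresponding lift $\tilde v_2$, and, writing $\tilde p$ for the lift of $p$ on the lifted path $[\tilde v_1,\tilde v_2]$, the map $\tilde g^m$ sends each $[\tilde v_i,\tilde p]$ onto $[\tilde v_i,\tilde q_m]$, obtained by appending to it the lift $\widetilde{\nu_m}$ of $\nu_m$ based at $\tilde p$ --- the same path for $i=1$ and $i=2$, since a lift is determined by its initial point --- with $\tilde g^m(\tilde p)=\tilde q_m$ the endpoint of $\widetilde{\nu_m}$. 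Because $\mu_i$ is legal, $\tilde g^m$ multiplies distances along $[\tilde v_i,\tilde p]$ by exactly $\lambda^m$; hence it carries the endpoints of $\beta_1$ and of $\beta_2'$, both at distance $|\beta_1|$ from $\tilde p$, to the single point of $\widetilde{\nu_m}$ at distance $\lambda^m|\beta_1|\le|\nu_m|$ from $\tilde q_m$. Thus $\tilde g^m$ identifies the two endpoints of the lift of $\beta'$, so $[g^m(\beta')]$ is trivial and $\beta'$ is a vanishing path containing $\beta$.

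I expect the main obstacle to be the structural input of the first paragraph --- that $[g^m(\mu_1)]$ and $[g^m(\mu_2)]$ share the common tail $\nu_m$, equivalently that an indivisible Nielsen path is balanced with $|\mu_1|=|\mu_2|$ --- which is precisely where indivisibility and the train-track hypothesis are used. Once this is available the folding argument is routine; note that $\beta\subseteq\alpha\dagger_\epsilon$ enters only to guarantee $|\beta_i|<|\mu_i|$, and that the statement genuinely fails for $\beta=\alpha$.
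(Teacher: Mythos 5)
Your proof is correct and follows essentially the same route as the paper's: both rest on the Bestvina--Handel decomposition of an indivisible Nielsen path into two legal halves whose $g$-images share a common terminal segment, and both produce the vanishing path by iterating $g$ until the two halves fold together over $\beta$. The only differences are cosmetic: you re-derive that decomposition (the paper simply cites it from \cite{ar:BH92}) and exhibit the vanishing path as the symmetrized subpath $\beta'$ rather than as $\alpha\dagger_\epsilon$ itself.
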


\begin{proof}
  An indivisible Nielsen path can be decomposed into a sequence of legal paths as $\alpha = \alpha_0
  \cdot \beta_0 \cdot \overline{\beta}_1 \cdot \overline{\alpha}_1$,
  where $g(\alpha_i) = \alpha_i \cdot \beta_i$ and $g(\beta_0) =
  g(\beta_1)$ \cite{ar:BH92}. Hence for $\epsilon >0$, with large enough $n$, the path
  $[g^n(\alpha\dagger_\epsilon)]$ is contained in $\beta_0 \cdot
  \overline{\beta}_1$, and hence $[g^{n+1}(\alpha\dagger_\epsilon)]$ is
  trivial. 
\end{proof}

Putting together Theorem \ref{th:Np-or-vp} and Lemmas \ref{lem:onlyiNP} and \ref{lem:vp-in-Np}, we have the following: 

\begin{proposition}\label{prop:find-vp}
  Suppose that $\phi \in \Out F_k$ is fully irreducible with stable tree $T_+=T_+(\phi)$, 
  that $g
  \co G \to G$ is a train-track representative of $\phi$, and that $a\in F_k$ is such
  that $\ell_{T_+}(a) > 0$.  Then there is a surjective
  $F_k$--equivariant map $f_g \co \widetilde{G} \to T_+$ such that if
  $[x,y] \subset \widetilde{G}$ is a lift of a reduced path $\alpha
  \subset G$ that $n$--covers $a$, for some $n \geq 2$, and if $f_g(x) =
  f_g(y)$, then $\alpha$ contains a vanishing path that $n$--covers
  $a$.
\end{proposition}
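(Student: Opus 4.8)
The plan is to take $f_g\co\widetilde G\to T_+$ to be the $F_k$--equivariant map furnished by Theorem~\ref{th:Np-or-vp}, and then to channel the dichotomy it provides through Lemmas~\ref{lem:onlyiNP} and~\ref{lem:vp-in-Np}. So suppose $[x,y]\subset\widetilde G$ is a lift of the reduced path $\alpha$, that $\alpha$ $n$--covers $a$ with $n\ge 2$, and that $f_g(x)=f_g(y)$; by Theorem~\ref{th:Np-or-vp} there is an $m\ge 0$ with $[g^m(\alpha)]$ either trivial or a Nielsen path.

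If $[g^m(\alpha)]$ is trivial then $\alpha$ is itself a vanishing path of $g$ (and $\ell_{T_+}(a)>0$ forces $m\ge 1$), so, as $\alpha$ already $n$--covers $a$ by hypothesis, we may take the vanishing path required by the proposition to be all of $\alpha$.

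The substantive case is that $\eta:=[g^m(\alpha)]$ is a nontrivial Nielsen path. Write $\alpha=\beta\cdot\alpha_0\cdot\beta'$ as in Definition~\ref{def:ncover}, with $\alpha_0$ a lift of the loop representing $a^n$. The first step is to show that $\eta$ still $n$--covers $\phi^m(a)$: the path $\alpha_0$ is a segment of $\widetilde G^{\la a\ra}$ of length $n\,\ell_{\widetilde G}(a)$, so $[g^m(\alpha_0)]$ runs along $\widetilde G^{\la \phi^m(a)\ra}$ for length exactly $n\,\ell_{\widetilde G}(\phi^m(a))$, flanked at each end by an off-axis arc of length $d(\widetilde g^m(\tilde p),\widetilde G^{\la \phi^m(a)\ra})$; bounded cancellation, applied to the reduced path $\alpha$, controls how much of $[g^m(\alpha_0)]$ is lost when $[g^m(\beta)]$, $[g^m(\alpha_0)]$, $[g^m(\beta')]$ are concatenated and pulled tight, and --- since $\eta=[g(\eta)]$, so $m$ may be increased at will, while $\ell_{\widetilde G}(\phi^m(a))\to\infty$ because $\ell_{T_+}(a)>0$ prevents $a$ from being $\phi$--periodic --- that loss is absorbed by the flanking arcs, leaving $\eta$ with an overlap of length $\ge n\,\ell_{\widetilde G}(\phi^m(a))$ with $\widetilde G^{\la\phi^m(a)\ra}$. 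As $\ell_{T_+}(a)>0$ gives $\ell_{T_+}(\phi^m(a))>0$, Lemma~\ref{lem:onlyiNP} applies to the Nielsen path $\eta$ and yields a subpath $\eta_0\subseteq\eta$ that $n$--covers $\phi^m(a)$ and lies in $\mu\dagger_\epsilon$ for an indivisible Nielsen path $\mu$ appearing in the decomposition of $\eta$; Lemma~\ref{lem:vp-in-Np} then shows $\mu\dagger_\epsilon$ is a vanishing path, so $\eta$ contains a vanishing path $n$--covering $\phi^m(a)$.

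It remains to carry this back into $\alpha$ along $g^m$. Since $\mu\dagger_\epsilon$ is a reduced subpath of $\eta=[g^m(\alpha)]$, it is the tightening of a sub-edge-path of $g^m(\alpha)$, namely of $g^m(\alpha^\ast)$ for some subpath $\alpha^\ast\subseteq\alpha$; then $[g^{m'}(\alpha^\ast)]=[g^{m'-m}(\mu\dagger_\epsilon)]$ is trivial for $m'$ large, so $\alpha^\ast\subseteq\alpha$ is a vanishing path, and pushing the axis overlap of $[g^m(\alpha^\ast)]=\mu\dagger_\epsilon$ backward through $g^m$ --- again using the bounded-backtracking estimates of Section~\ref{ssc:bbt}, which become negligible against $\ell_{\widetilde G}(\phi^m(a))$ once $m$ is large --- shows that $\alpha^\ast$ $n$--covers $a$, completing the proof. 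I expect this transport step, and in particular the bookkeeping needed to see that an axis overlap is preserved with the \emph{same} multiplier $n$ under the chain of operations ``apply $g^m$, tighten, pass to a subpath, pull back'', to be the main obstacle; it is exactly here that the hypothesis $n\ge 2$ (needed to invoke Lemma~\ref{lem:onlyiNP}) and the positivity $\ell_{T_+}(a)>0$ (needed so that $\phi^m(a)$ remains loxodromic on $T_+$ and the relevant axis lengths blow up) are both genuinely used, together with the control on backtracking of iterates of the train-track map $g$.
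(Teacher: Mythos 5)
Your skeleton is the one the paper intends: take $f_g$ from Theorem~\ref{th:Np-or-vp}, split into the case where some $[g^m(\alpha)]$ is trivial (then $\alpha$ itself is the desired vanishing path) and the case where $[g^m(\alpha)]$ is a Nielsen path, and in the latter case feed Lemmas~\ref{lem:onlyiNP} and~\ref{lem:vp-in-Np}. The trivial case is handled correctly. The gap is in the step where you claim that $\eta:=[g^m(\alpha)]$ $n$--covers $\phi^m(a)$. Because $\eta$ is a Nielsen path, it is a \emph{fixed} finite path: $[g^{m'}(\alpha)]=\eta$ for all $m'\geq m$, so its length does not change when you ``increase $m$ at will,'' whereas the overlap you assert, of length at least $n\,\ell_{\widetilde G}(\phi^m(a))$, tends to infinity precisely because, as you yourself note, $\ell_{T_+}(a)>0$ forces $\ell_{\widetilde G}(\phi^m(a))\to\infty$. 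So for large $m$ the inequality you want is false outright, and for no fixed $m$ have you established it: the cancellation incurred in tightening $[g^m(\beta)]\cdot[g^m(\alpha_0)]\cdot[g^m(\beta')]$ is controlled only by $BCC(g^m)$, which grows on the order of $\lambda^m$ --- the same order as $\ell_{\widetilde G}(\phi^m(a))$ and as the lengths of the flanking off-axis arcs $[\,\widetilde g^m(\widetilde p),\widetilde G^{\la \Phi^m(a)\ra}\,]$ --- so there is no regime in which ``the loss is absorbed by the flanking arcs'' comes for free. The same non-uniformity undercuts the closing pull-back step, where you again appeal to backtracking constants ``becoming negligible against $\ell_{\widetilde G}(\phi^m(a))$ once $m$ is large.''

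What is missing, then, is the actual content of the Nielsen-path case: an argument, at a \emph{fixed} exponent $m$, that the eventual Nielsen path inherits an $n$--fold overlap with the axis of the appropriate image of $a$, together with bookkeeping showing that the resulting vanishing subpath of $\eta$ pulls back to a subpath of $\alpha$ that $n$--covers $a$ itself rather than $\phi^m(a)$. Note that Lemmas~\ref{lem:onlyiNP} and~\ref{lem:vp-in-Np} are set up to be applied to a Nielsen path that $n$--covers $a$ and to produce a vanishing path \emph{inside that Nielsen path}; the way they combine cleanly with Theorem~\ref{th:Np-or-vp} is when one can reduce to $m=0$, i.e., when $\alpha$ itself is (or contains) the relevant Nielsen path, so that the vanishing path $\alpha'\dagger_\epsilon$ produced by Lemma~\ref{lem:vp-in-Np} is literally a subpath of $\alpha$. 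Reducing the general case to that situation --- or otherwise transporting the $n$--covering forward through $g^m$ and back with the multiplier intact --- is exactly the obstacle you identify at the end, and your proposal does not overcome it.
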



\subsection{\texorpdfstring{Laminations and the map $\CQ$}{Laminations and the map Q}}\label{ssc:lamination-Q}

There are several notions of a ``lamination'' on a free group.
For a full discussion on three different approaches and the
relations between them, see \cite{ar:CHL08-I,ar:CHL08-II,ar:CHL08-III}.
We will only briefly describe the elements of the theory we need here.

The group $F_k$ is hyperbolic and hence has a boundary $\bd F_k$. We denote:
\[ \bd^2 F_k = \{ (x_1,x_2) \in \bd F_k \times \bd F_k \; | \; x_1
\neq x_2 \} \] 
This set is naturally identified with the space of oriented
\emph{bi-infinite geodesics} in a tree $T \in cv_k$ as we explain now.

An oriented bi-infinite geodesic is an isometric embedding $\ell \co
\BR \to T$ considered up to reparametrization preserving the
orientation.  Any geodesic has two distinct endpoints in $\bd T$,
denoted $\ell(\infty)$ and $\ell(-\infty)$.  We can thus identify the
geodesic $\ell$ with endpoints $(\ell(\infty),\ell(-\infty)) \in \bd^2
T = \{ (x_1,x_2) \in \bd T \times \bd T \; | \; x_1 \neq x_2 \}$,
which, via the action of $F_k$, is naturally identified with $\bd^2
F_k$.  Conversely, a point $(x_1,x_2) \in \bd^2 F_k$ determines two
distinct points $x_1',x_2' \in \bd T$.  Between these two points,
there is a unique (up to orientation preserving reparametrization)
oriented geodesic $\ell \co \BR \to T$ such that $\ell(\infty) = x_1'$
and $\ell(-\infty) = x_2'$.  We will use this identification without
further remark.

There is fixed point free involution on $\bd^2 F_k$ defined by
$\sigma\co (x_1,x_2) \to (x_2,x_1)$, corresponding to reversing a
geodesic's orientation in $T$.

A \emph{lamination} is a closed $F_k$--invariant and
$\sigma$--invariant subset $\Lambda \subseteq \bd^2 F_k$. The set of
algebraic laminations inherits a Hausdorff topology from $\bd^2 F_k$,
which is described in \cite{ar:CHL08-I}.  A nontrivial element $a \in
F_k$ determines a {\it minimal rational} lamination:
$$\Lambda(a) = 
\{ (ga^{-\infty}, ga^{+\infty}) \cup (ga^{+\infty}, ga^{-\infty}) \ |
\ g \in F_k \} $$ Note that the set $\Lambda(a)$ depends only on the
conjugacy class of $a$. Although we will not need them here, we
mention that the set of {\it rational} laminations consists of finite
unions of minimal rational laminations. The most important example of
a lamination in what follows is the \emph{stable lamination}
$\Lambda_+(\phi)$ associated to a fully irreducible
element $\phi \in \Out F_k$, as defined in
\cite{ar:BFH97}.\footnote{Note that in \cite{un:HM}, the stable
  lamination is called the ``expanding lamination'' and denoted by
  $\Lambda_-$ as it is more naturally associated to $T_-(\phi)$.  See
  Proposition \ref{prop:stableQ}.}  The \emph{unstable lamination}
$\Lambda_-(\phi)$ associated to $\phi$ is the stable
lamination of $\phi^{-1}$, so that $\Lambda_-(\phi) =
\Lambda_+(\phi^{-1})$.

Let $g \co G \to G$ be a train-track representative of $\phi$ with
expansion factor $\lambda$.  After passing to a power of $g$ if
necessary, we can assume that $g$ has a fixed point $x$ contained in
the interior of an edge.  For some small $\epsilon$--neighborhood $U$
of $x$, we have that $g(U) \supset U$.  Fix an isometry $\ell \co
(-\epsilon,\epsilon) \to U$ and extend this to the unique isometric
immersion $\ell \co \BR \to G$ such that $\ell(\lambda^n t) = g^n(
t)$.  This immersion lifts to a collection of geodesics $\tilde{\ell}
\co \BR \to \widetilde{G}$.  Using the identification mentioned above
between $\bd^2 F_k$ and the space of geodesics in $\widetilde{G}$, the
collection of all geodesics (called \emph{leaves}) constructed as
above determines a closed $F_k$--invariant subset of $\bd^2 F_k$
called the \emph{stable lamination}.  It is proved in \cite{ar:BFH97}
that this set is well-defined independent of $g$. The leaves of
$\Lambda_+(\phi)$ are \emph{quasi-periodic} \cite{ar:BFH97}, so that for
every $L > 0$ there is an $L' > L$ such that for every interval $I$ of
length $L$ and every interval $I'$ of length $L'$ there is an element
$x \in F_k$ such that $x\ell(I) \subseteq \ell(I')$.

Given a basis $\CA$ of $F_k$ and a tree $T \in \overline{cv}_k$,
define the set $L^1_\CA(T)$ as the set of right infinite reduced words
$x = x_1x_2x_3\cdots$ in the basis $\CA$ such that for some $p \in T$,
the sequence of points $ (x_1x_2 \cdots x_i)p$ is bounded. The
identification of right infinite reduced words in $\CA$ with $\bd F_k$
identifies $L^1_\CA(T)$ with a subset $L^1(T) \subseteq \bd F_k$ that
is well-defined independent of the choice of basis. Bounded backtracking ensures the existence of a well-defined injective map $\CQ \co \bd F_k - L^1(T) \to \bd T$. 
Using the injectivity of $\CQ$ on $\bd F_k - L^1(T)$, we associate to any 
oriented bi-infinite geodesic $\alpha = (x_1,x_2) \in
\bd^2 F_k - (L^1(T))^2$ an oriented bi-infinite
geodesic $\alpha_T \subset T$, namely $\alpha_T = (\CQ(x_1),\CQ(x_2))
\in \bd^2 T$, if neither endpoint of $\alpha$ is in
$L^1(T)$; otherwise we define $\alpha_T$ to be the empty set. In the latter case, following 
\cite{ar:LL03}, we say that the geodesic $\alpha$ is \emph{$T$--bounded}.

In certain cases, the map $\CQ\co \bd F_k - L^1(T) \to \bd T$ extends to a map on $\bd F_k$.

\begin{proposition}[\cite{ar:LL03}, Proposition 3.1]\label{prop:Q}
  Suppose $T \in \overline{cv}_k$ has dense orbits and trivial arc
  stabilizers (e.g., the stable tree for a fully irreducible outer
  automorphism). There exists a map $\CQ\co L^1(T) \to \overline{T}$
  to the metric closure $\overline{T}$ of $T$ such that, for any $f
  \co T_0 \to T$, where $T_0 \in cv_k$, and any ray $\rho$ in $T_0$
  representing $x \in L^1(T)$, the point $\CQ(x)$ belongs to the
  closure of $f(\rho)$ in
  $\overline{T}$. 
  \end{proposition}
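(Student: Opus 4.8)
The plan is to realize $\CQ(x)$ as the ``limit in $\overline T$'' of the image of a ray representing $x$, with bounded backtracking supplying the convergence. First I would fix any $T_0 \in cv_k$ together with an $F_k$--equivariant map $f \co T_0 \to T$; such maps exist and, by Section \ref{ssc:bbt}, satisfy $C := BBT(f) < \infty$. Given $x \in L^1(T)$, I would fix a basepoint $p_0 \in T_0$ and a geodesic ray $\rho \co [0,\infty) \to T_0$ with $\rho(0) = p_0$ and $\rho(\infty) = x$, and set $q_0 = f(p_0)$, $q(t) = f(\rho(t))$. The content of $x \in L^1(T)$ is exactly that the set $\{q(t) : t \ge 0\}$ is bounded in $T$ — this is independent of all the choices just made (changing the data alters the cofinal ``prefix orbit'' of $\rho$ only by an isometry of $T$ or by passing to an eventually-equal ray) — hence $\{q(t)\}$ is bounded in $\overline T$.

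Next I would study the geodesics $\gamma_t = [q_0, q(t)] \subseteq T$. In a tree the image of any path joining two points contains the geodesic between them, so $\gamma_t \subseteq f(\rho|_{[0,t]}) \subseteq f(\rho)$. For $s \le t$ the point $\rho(s)$ lies on the $T_0$--geodesic $[\rho(0),\rho(t)]$, so bounded backtracking gives $d_T(q(s),\gamma_t) \le C$; since $T$ is a tree, this says the branch point $b(s,t)$ of $\gamma_s$ and $\gamma_t$ (defined by $\gamma_s \cap \gamma_t = [q_0, b(s,t)]$) satisfies $d_T(q(s), b(s,t)) \le C$. I would then set $[q_0, c_t] := \bigcap_{s \ge t}\gamma_s$: an intersection of geodesic segments issuing from the common point $q_0$ is again such a segment; these segments increase with $t$, and their lengths $d_T(q_0, c_t)$ are nondecreasing and bounded (by the bound on $\{q(t)\}$). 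Hence $(c_t)$ is Cauchy and converges to a point $c \in \overline T$; moreover $c_t \in \gamma_t \subseteq f(\rho)$, so $c \in \overline{f(\rho)}$. Finally $d_T(q_0, c_t) = \inf_{s \ge t}d_T(q_0, b(t,s)) \ge d_T(q_0, q(t)) - C$ forces $d_T(q(t), c_t) \le C$, so the tail of the image path stays within $C$ of $c$. I would take $c$ as the candidate value of $\CQ(x)$.

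It then remains to check that $c$ does not depend on the data $(T_0, f, \rho, p_0)$ — whereupon the asserted property holds by construction, and agreement with the existing $\CQ$ on $\bd F_k - L^1(T)$ is immediate. Given a second datum $(T_0', f', \rho')$ for the same $x$, I would choose an $F_k$--equivariant map $h \co T_0 \to T_0'$; then $f' \circ h$ and $f$ are equivariantly homotopic by a homotopy with uniformly bounded tracks, so $d_T(f(p), f'h(p)) \le D$ for all $p$, while the tightening of $h \circ \rho$ and the ray $\rho'$ converge to the same end of $T_0'$ and hence eventually coincide. Re-running the previous paragraph for both data and comparing through these two facts shows $d_{\overline T}\big(c(f,\rho),\, c(f',\rho')\big) \le C + C' + D$ — i.e.\ the construction is canonical only up to a bounded error. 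Removing this error is where I expect the real difficulty, and where the hypotheses on $T$ get used: because $T$ has dense orbits (with trivial arc stabilizers, needed to control resolutions of $T$), one can approximate $T$ by simplicial trees $T_n \in cv_k$ with resolving maps $f_n \co T_n \to T$ whose backtracking constants $BBT(f_n)$ tend to $0$; feeding such resolutions into the construction should pin the candidate down to within $\epsilon$ for every $\epsilon > 0$, forcing a single well-defined value $\CQ(x)$ that lies in $\overline{f(\rho)}$ for every datum. I would expect essentially all the technical work, and any genuine subtlety, to lie in making this last step precise — in particular in verifying both the existence of these small--$BBT$ approximations and that they suffice to collapse the error; the $\BR$--tree bookkeeping of the middle paragraph is, by contrast, routine once bounded backtracking is in hand.
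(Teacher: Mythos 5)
First, a point of order: the paper does not prove this proposition at all --- it is imported verbatim from Levitt--Lustig \cite{ar:LL03} (their Proposition 3.1) and used as a black box. So there is no in-paper argument to compare yours against; I can only assess your sketch on its own terms and against the source.

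Your first two paragraphs are correct and reproduce the standard bounded-backtracking construction: for a fixed equivariant $f\co T_0 \to T$ with $C = BBT(f)$, the stable parts $[q_0,c_t] = \bigcap_{s\ge t}[q_0,f(\rho(s))]$ increase, are bounded precisely because $x \in L^1(T)$, and converge to a point $c(f,\rho)$ of $\overline{T}$ lying in $\overline{f(\rho)}$, with the tail of the image staying within $C$ of it. The genuine gap is the one you flag yourself, and it is not merely unfinished bookkeeping --- it is the entire content of the proposition. Two distinct problems: (i) the existence of resolutions $f_\epsilon \co T_\epsilon \to T$ with $BBT(f_\epsilon)\to 0$ is itself a substantive theorem (Theorem 2.1 of \cite{ar:LL03}, and exactly where dense orbits are used); you assume it without argument. (ii) Even granting it, your mechanism for collapsing the ambiguity does not work as stated. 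Your comparison yields $d\bigl(c(f,\rho),\,c(f_\epsilon,\rho_\epsilon)\bigr) \le C + \epsilon + D$, where $C=BBT(f)$ is fixed and $D$ bounds $d_T(f(p),f_\epsilon(h(p)))$ over $p\in T_0$; this $D$ is the sup-distance between two equivariant maps $T_0\to T$, which is finite but has no reason to shrink as $\epsilon\to 0$. So the inequality only ever localizes the candidate point to within a fixed positive radius: it shows the various candidates form a bounded set, not a single point. To conclude, one needs a genuinely finer comparison --- for instance, showing that the points $c(f_\epsilon,\rho_\epsilon)$ form a Cauchy family defining $\CQ(x)$, and then separately that this particular point lies in $\overline{f(\rho)}$ for an arbitrary $f$ --- and that finer argument is precisely what \cite{ar:LL03} supplies and what your proposal leaves open. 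As written, the proposal establishes existence of \emph{some} point in each $\overline{f(\rho)}$, but not of a single $\CQ(x)$ common to all choices, which is what the proposition asserts.
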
 

\noindent Combining this with the previous discussion, we have a
map $\CQ \co \bd F_k \to \overline{T} \cup \bd T$ whenever $T$ has
dense orbits and trivial arc stabilizers.  

The relation between stable trees and laminations is illustrated by
the following.

\begin{proposition}[\cite{ar:BFH97}, Lemma 3.5 (3) \& \cite{ar:LL03},
  Corollary 2.3]\label{prop:stableQ}
  Suppose that $\phi \in \Out F_k$ is fully irreducible with stable tree
  $T_+$ and unstable lamination $\Lambda_-$.  Let $\CQ \co \bd F_k \to
  \overline{T} \cup \bd T$ be the map defined following Proposition \ref{prop:Q}.
  Then for any leaf $\ell \in \Lambda_-$, we have $\CQ(\ell(\infty)) =
  \CQ(\ell(-\infty))$.
\end{proposition}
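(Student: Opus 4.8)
The plan is to reduce the statement to the geometric characterization of the stable lamination in terms of the immersion $\ell\co\BR\to G$ (equivalently its lift $\tilde\ell\co\BR\to\widetilde G$) whose endpoints are the endpoints of a leaf, and then to apply Proposition \ref{prop:Q} to the train-track map $g$. Fix a train-track representative $g\co G\to G$ of $\phi\inv$ (so that the stable lamination of $\phi\inv$, which is $\Lambda_-(\phi)$, is the one built from the expanding immersion of $g$), with expansion factor $\mu=\lambda_{\phi\inv}$ and a fixed point $x$ in the interior of an edge as in the construction recalled in Section \ref{ssc:lamination-Q}. Let $\ell\in\Lambda_-$ be a leaf, realized as $\tilde\ell\co\BR\to\widetilde G$ with $\tilde\ell(\pm\infty)=\ell(\pm\infty)\in\bd F_k$. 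Fix an $F_k$--equivariant map $f\co\widetilde G\to T_+$; since $T_+$ has dense orbits and trivial arc stabilizers, Proposition \ref{prop:Q} applies and gives the extension $\CQ\co\bd F_k\to\overline{T_+}\cup\bd T_+$, with the property that for any ray $\rho$ in $\widetilde G$ representing a point $y\in L^1(T_+)$, the point $\CQ(y)$ lies in the closure of $f(\rho)$ in $\overline{T_+}$.

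First I would observe that both ends of $\ell$ lie in $L^1(T_+)$: this is exactly the statement that the leaves of the expanding lamination of $g$ are $T_+$--bounded, i.e.\ $f$ carries all of $\tilde\ell(\BR)$ into a bounded region of $T_+$. The mechanism is standard bounded backtracking applied to the self-map: because $\ell$ is built from the small neighborhood $U$ of the fixed point $x$ by the rule $\tilde\ell(\mu^n t)=\widetilde g^n(\tilde\ell(t))$, any finite subsegment $J\subset\tilde\ell(\BR)$ is $\widetilde g^n(J_0)$ for a fixed short legal segment $J_0\subset U$ and $n$ large; since $f$ semiconjugates $\widetilde g$ to (a representative of) $\phi\inv$ acting on $T_+$ — more precisely $f\circ\widetilde g$ and $\Phi\circ f$ agree up to the bounded backtracking constant $BBT(f)$, and $\Phi$ acts on $T_+$ with $T_+$ its stable tree so does not expand — the image $f(\widetilde g^n(J_0))$ stays within a fixed bounded neighborhood of $f(U)$, independent of $n$. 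Letting $J$ exhaust $\tilde\ell(\BR)$ shows $f(\tilde\ell(\BR))$ is bounded, so $\tilde\ell(+\infty)$ and $\tilde\ell(-\infty)$ are both in $L^1(T_+)$, and moreover the closure of $f(\tilde\ell([0,\infty)))$ and of $f(\tilde\ell((-\infty,0]))$ are both bounded.

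Next, by the defining property of $\CQ$ on $L^1(T_+)$ from Proposition \ref{prop:Q}, $\CQ(\ell(\infty))$ lies in the closure of $f(\tilde\ell([0,\infty)))$ and $\CQ(\ell(-\infty))$ lies in the closure of $f(\tilde\ell((-\infty,0]))$; both of these closures sit inside the single bounded set $\overline{f(\tilde\ell(\BR))}$. To pin down that the two limit points coincide rather than merely lying in a common bounded set, I would use quasi-periodicity of the leaves of $\Lambda_-$ (recalled in Section \ref{ssc:lamination-Q}): for every $L$ there is $L'$ so that every length-$L$ subsegment of $\ell$ recurs inside every length-$L'$ subsegment, translated by some $w\in F_k$. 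Applying $f$ and using that $f(\tilde\ell(\BR))$ has finite diameter $D$, the $F_k$--translates $w$ realizing recurrences of a long central segment move $f(U)$ a bounded amount; feeding longer and longer initial and terminal subrays into Proposition \ref{prop:Q} and using recurrence to compare them forces $\CQ(\ell(\infty))$ and $\CQ(\ell(-\infty))$ to be approximated by $f$-images of overlapping translated copies of the same segment, hence to be at distance $0$. Thus $\CQ(\ell(\infty))=\CQ(\ell(-\infty))$.

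The main obstacle I anticipate is precisely the last step: Proposition \ref{prop:Q} only locates $\CQ(\ell(\pm\infty))$ somewhere in a bounded set, and upgrading ``both in a bounded set'' to ``equal'' is the crux. The clean way to do it is probably not the quasi-periodicity argument sketched above but rather to invoke directly the identity between $\Lambda_-(\phi)$ and the dual lamination $L^2(T_+)$ — that is, the statement (\cite{ar:BFH97}, \cite{ar:LL03}) that a leaf of the expanding lamination has both endpoints mapping to the same point of $\overline{T_+}$ is essentially the definition of membership in $L^2(T_+)$, and the content of the cited results is the equality $\Lambda_-=L^2(T_+)$. So in the write-up I would either (a) cite that equality and read off the conclusion, or (b) carry out the semiconjugacy-plus-boundedness argument of steps one and two and then close the gap by noting that if $\CQ(\ell(\infty))\neq\CQ(\ell(-\infty))$ the geodesic between them in $T_+$ would be a nondegenerate arc in the bounded set $\overline{f(\tilde\ell(\BR))}$ that is $g$-forward-invariant up to bounded error, contradicting that $T_+$ is the stable (not unstable) tree for $\phi\inv$ and hence has no such invariant arc — i.e.\ $\Phi$ has no ``attracting arc'' in $T_+$.
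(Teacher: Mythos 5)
The paper does not actually prove Proposition \ref{prop:stableQ}; it is imported wholesale from \cite{ar:BFH97} and \cite{ar:LL03}, so there is no in-paper argument to measure yours against. Judged on its own, the first half of your sketch is sound: realizing $\Lambda_-(\phi)$ as the expanding lamination of a train-track map $g$ for $\phi^{-1}$ and comparing $f\circ\widetilde g$ with $H^{-1}\circ f$, where $H^{-1}$ is the $\lambda_\phi^{-1}$--Lipschitz homothety of $T_+$ coming from $T_+\phi=\lambda_\phi T_+$ (it strictly contracts, which is what you need, not merely ``does not expand''), one gets $d\bigl(f(\widetilde g^{\,n}(x)),H^{-n}(f(x))\bigr)\leq C\sum_{i<n}\lambda_\phi^{-i}\leq C\lambda_\phi/(\lambda_\phi-1)=:C'$, hence $f(\tilde\ell(\BR))$ is bounded and both ends of $\ell$ lie in $L^1(T_+)$.

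The gap is exactly where you flag it, and neither proposed closing works. Route (a) is circular: the inclusion $\Lambda_-\subseteq L^2_\CQ(T_+)$ \emph{is} the statement being proved, as the paper notes immediately after the proposition. Route (b) proves too little: knowing that $\CQ(\ell(\infty))$ and $\CQ(\ell(-\infty))$ each lie in the closure of $f$--images of overlapping (even identical) translated segments bounds their distance by the diameter of those images, which is positive; quasi-periodicity yields recurrence and proximity, never coincidence. The ``invariant arc'' variant founders on the same point: your error terms accumulate to the constant $C'$ rather than contracting, so iteration only confines both $\CQ$--images to the ball of radius $C'$ about the unique fixed point $Q_0$ of the contraction $H^{-1}$ on $\overline{T}_+$, and a nondegenerate arc of length at most $2C'$ that is ``invariant up to error $C'$'' is no contradiction. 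The missing ingredient is the uniqueness/naturality of $\CQ$ --- it depends only on the pair $(x,T_+)$ and not on the auxiliary map $f$ (this is the uniqueness in \cite[Proposition 3.1]{ar:LL03}). That gives $\CQ\circ\bd\Phi^{-1}=H^{-1}\circ\CQ$, so for the periodic leaf through the fixed point of $g$ both $\CQ(\ell(\pm\infty))$ are fixed by the contraction $H^{-1}$ and hence equal $Q_0$; general leaves then need a further limiting argument using minimality of $\Lambda_-$. That extra input is precisely the content of the cited results and is not recovered by bounded-backtracking estimates alone.
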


\noindent In light of the above propositions, we can define for any tree $T \in
\overline{cv}_k$ with dense orbits and trivial arc stabilizers
\cite{ar:CHL08-II}:
\[ 
L^2_\CQ(T) = \{ (x_1,x_2) \in \bd^2 F_k \; | \; \CQ(x_1) = \CQ(x_2) \}
\] where $\CQ \co \bd F_k \to \overline{T} \cup \bd T$ is the map from
Proposition \ref{prop:Q}.  With this definition, Proposition
\ref{prop:stableQ} states that $\Lambda_-(\phi) \subseteq
L^2_{\CQ}(T_+^\phi)$ where $\phi \in \Out F_k$ is fully irreducible.
If $\phi$ is hyperbolic (i.e.,~$\phi$ does not have nontrivial periodic
conjugacy class) then $L^2_{\CQ}(T_+^\phi)$ is the
``diagonal closure'' of $\Lambda_-(\phi)$ \cite{un:KL}.

\bigskip

Missing from the above is a discussion of \emph{measured geodesic
  currents} and \emph{Dehn twist automorphisms} needed for Section
\ref{sc:example}.  We defer their discussion until needed.


\section{Geometric relative twisting}\label{sc:gtwist}

Our first definition of relative twisting for $\Out F_k$ follows
closely the original geometric notion for the mapping class group,
upon replacing a surface with a suitable 2--complex. This complex, the
{\it Guirardel Core} for two $F_k$--trees $T,T'$, is a certain
$F_k$--invariant subspace $\CC \subset T \times T'$ (with the diagonal
action). We will not need the precise definition of the complex for
our purposes; rather, we record in Proposition \ref{prop:core} just those
properties of $\CC$ we do require, together with references.

In the following, if $p$ is a point in $T$, then $\CC_{p} = \{ x' \in
T' \, | \, (p,x') \in \CC \}$; similarly, for $p' \in T'$, we have $\CC_{p'} = \{ x \in T \, |
\,(x,p') \in \CC \}$.  These sets are called the \emph{slices of the
  core}. 

\begin{proposition}\label{prop:core}
 Suppose $T,T' \in CV_k$.
 \begin{enumerate}

 \item The core $\CC \subset T \times T'$ is nonempty, connected,
   closed, {\rm CAT(0)}, $F_k$--invariant and has convex fibers, i.e., 
   the slices $\CC_p$ and $\CC_{p'}$ are each convex for all $p \in T$ and $p'
   \in T'$. Moreover, $\CC$ is the minimal (with respect to inclusion)
   subset of $T \times T'$ with these properties \cite[Main
   Theorem]{ar:Gu05}.

 \item The quotient $\CC/F_k$ has finite volume \cite[Theorem
   8.1]{ar:Gu05}. This volume is called the \emph{intersection
     number}, denoted $i(T,T')$.

 \item For any $p' \in T'$ that is not a vertex, any arc $\gamma
   \subset \CC_{p'}$ is contained in a vanishing path of any change of
   marking map $f\co T \to T'$ \cite[Lemma 3.7 \& Remark
   5.3]{ar:BBC10}.

 \end{enumerate}
\end{proposition}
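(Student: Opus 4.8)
The three assertions are each proved in the references attached to the statement, so my plan is to recall why each holds rather than to reprove them ab initio. Recall Guirardel's construction: given $T, T' \in CV_k$, for each pair $(p,p') \in T \times T'$ the \emph{quadrants} are the products $\delta \times \delta'$ of a component $\delta$ of $T \setminus \{p\}$ with a component $\delta'$ of $T' \setminus \{p'\}$; each quadrant is declared \emph{heavy} or \emph{light} according to a finiteness criterion for an associated sum of lengths, and $\CC$ is defined to be the complement in $T \times T'$ of the union of all the light open quadrants.

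For part (1), that $\CC$ is closed, $F_k$-invariant and CAT(0) is immediate from this description, being the complement of a union of open quadrants inside a product of two $\BR$-trees; that it is nonempty, connected, has convex slices $\CC_p$, $\CC_{p'}$, and is the minimal subset of $T \times T'$ enjoying these properties is exactly the content of Guirardel's Main Theorem. Nonemptiness and connectedness are where the hypothesis that the two actions are minimal --- indeed lie in $CV_k$ --- is used, since otherwise $T \times T'$ would be exhausted by light quadrants. I would simply invoke this theorem.

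For part (2), since $T$ and $T'$ are finite metric graphs upstairs, $\CC$ carries the structure of a locally finite square complex whose $2$-cells are the products $e \times e'$ of an edge of $T$ with an edge of $T'$ meeting the core; there are only finitely many $F_k$-orbits of such pairs of edges, so $\CC/F_k$ is a finite square complex and in particular has finite volume. This is Guirardel's Theorem 8.1, and the number $i(T,T')$ is by definition this volume.

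Part (3) carries the most content for our purposes, and the step I expect to be the main obstacle: one has to translate the intrinsic description of a slice $\CC_{p'}$ into extrinsic information about an arbitrary change of marking $f \co T \to T'$. The mechanism is that when $p'$ is not a vertex, the convex subtree $\CC_{p'} \subset T$ behaves like the support of the folding of $f$ over $p'$: after homotoping $f$ to a morphism one checks, using the compatibility of the heavy/light dichotomy with the local folding pattern of that morphism, that any arc $\gamma \subset \CC_{p'}$ can be completed to a path $\beta$ whose two endpoints $f$ identifies, so that $[f(\beta)]$ is trivial and $\gamma$ lies in the vanishing path $\beta$. This is precisely Lemma 3.7 together with Remark 5.3 of \cite{ar:BBC10}, which I would cite; the genuinely delicate point, and the one I would expect to occupy most of the work, is the identification of slices over regular points with such collapsible subtrees.
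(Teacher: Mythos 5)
The paper gives no proof of this proposition at all: it is stated explicitly as a list of properties of the core quoted from Guirardel's Main Theorem and Theorem 8.1 and from Lemma 3.7 and Remark 5.3 of \cite{ar:BBC10}, and your proposal rests on exactly the same citations, so it matches the paper's approach. One small caveat: your heuristic for part (2) --- that ``there are only finitely many $F_k$--orbits of pairs of edges'' whose square meets $\CC$ --- is a restatement of the finite-volume claim rather than an argument for it (the diagonal action on $T \times T'$ has infinitely many orbits of squares, and finiteness of those meeting the core is precisely what Guirardel proves), but since both you and the paper ultimately defer to Theorem 8.1 this does not affect correctness.
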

\noindent For the complete definition of the core, along with
examples, see \cite{ar:BBC10,ar:Gu05}.

Before going further we briefly recall relative twisting for curves on
a surface. Let $S$ be a surface of genus at least two, equipped with a
hyperbolic metric. We can consider $\pi_1(S)$ as a discrete group of
isometries of $\mathbb{H}^2$, so that $S = \mathbb{H}^2/\pi_1(S)$. Fix
three simple closed curves, $\alpha,\beta,\gamma$, so that $\beta$ and
$\gamma$ both intersect $\alpha$. Each of these curves corresponds to
a conjugacy class of an element in $\pi_1(S)$, and we can assume that
all three are geodesics on $S$. Let $S_\alpha$ be an annular cover of
$S$ corresponding to $\alpha$; that is, the quotient of $\mathbb{H}^2$
by the cyclic group generated by a representative of $\alpha$ in
$\pi_1(S)$. We let $\alpha_c$ denote the unique lift of $\alpha$ to
$S_\alpha$ that is closed.  The \emph{twist of $\beta$ and $\gamma$
  relative to $\alpha$} is defined as the maximum geometric
intersection number between $\beta'$ and $\gamma'$ that intersect
$\alpha_c \subset S_\alpha$, where $\beta'$ and $\gamma'$ range over
lifts of $\beta$ and $\gamma$ to $S_\alpha$.

We can reformulate this in terms of the universal cover
$\widetilde{S}$, defining the relative twist as follows. Fixing a lift
$\tilde{\alpha}$ of $\alpha$ to $\tilde{S}$, the twist of $\beta$ and
$\gamma$ relative to $\alpha$ is the maximum number of
$\alpha$--translates of $\tilde{\gamma}$ that intersect
$\tilde{\beta}$, over all choices of lifts $\tilde{\beta},
\tilde{\gamma}$ of $\beta,\gamma$ that intersect $\tilde{\alpha}$. See
Figure \ref{fig:rtwist}. This interpretation can be extended to trees
in Outer space, using the Guirardel core of $F_k$--trees $T$ and $T'$
in place of $\tilde{S}$.

\begin{figure}[t]
 \labellist
 \small\hair 2pt
 \pinlabel $\tilde{\alpha}$ [l] at 132 220
 \pinlabel $\tilde{\beta}$ [l] at 159 180
 \pinlabel $\tilde{\gamma}$ [l] at 41 154
 \pinlabel $\alpha\tilde{\gamma}$ [l] at 52 208
 \pinlabel $\alpha^2\tilde{\gamma}$ [l] at 81 244
 \pinlabel $\alpha^3\tilde{\gamma}$ [l] at 110 265
 \pinlabel $\alpha^{-1}\tilde{\gamma}$ [l] at 56 92
 \pinlabel $\alpha^{-2}\tilde{\gamma}$ [l] at 192 47
 \pinlabel $\alpha^{-3}\tilde{\gamma}$ [l] at 169 21
 \endlabellist
 \centering
 \includegraphics[scale=0.9]{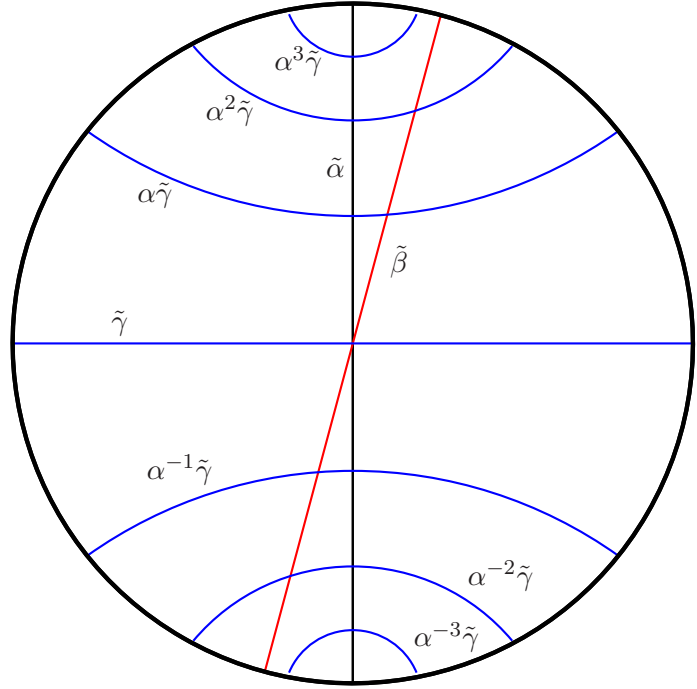}
 \caption{The relative twist of $\beta$ and $\gamma$ with respect to
   $\alpha$ is 5.}\label{fig:rtwist}
\end{figure}

The role of the simple closed curves $\beta,\gamma$ is filled by
tracks on $\CC$, and of the simple closed curve $\alpha$ by the axis
of an element of $F_k$ in $\CC$. A {\it track for $T$} in $\CC$ is the
set $\{ p\} \times \CC_{p}$
where $p$ is the midpoint of some edge of $T$; a track for $T'$ is
defined similarly.  We will also use {\it track} to refer to the image
of a track in the quotient $\CC/F_k$.  We record some elementary
properties of tracks.

\begin{enumerate}

\item Every track separates $\CC$.

\item Every track is a finite subtree.

\item Every track is a convex subset of $\CC$.

\end{enumerate}

As $\CC$ is CAT(0), and every nontrivial element acts hyperbolically,
the minset of a nontrivial element $g \in F_k$ is isometric to a
product $Y \times \BR$, where $Y$ is a convex subset of $\CC$
\cite{bk:BH99}. An {\it axis} of a nontrivial element $a \in F_k$ is a subset
of the minset of $a$ of the form $\{ y_0 \} \times \BR$.  The element
$a$ acts by translation on any of its axes.

Lemmas \ref{lem:gtwist-wd} and \ref{lem:wd} describe the extent to which
the intersection number between tracks and axes is well-defined.

\begin{lemma}\label{lem:gtwist-wd}
  Let $a \in F_k$ be a nontrivial element, $T,T' \in CV_k$ and
  consider the core $\CC \subset T \times T'$.  If a track $\tau$ in
  $\CC$ intersects an axis of $a$ in $\CC$, then it intersects every
  axis of $a$ in $\CC$.
\end{lemma}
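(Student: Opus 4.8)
The plan is to exploit the product structure of the minset of $a$ in $\CC$, together with the fact that a track $\tau$ is a convex subset of $\CC$ that separates $\CC$. Recall that the minset $M(a)$ splits isometrically as $Y \times \BR$, where $Y$ is convex, and that the two axes in question are $\ell_0 = \{y_0\} \times \BR$ and $\ell_1 = \{y_1\} \times \BR$ for points $y_0, y_1 \in Y$; the element $a$ translates along each by the same amount, namely its translation length in $\CC$. Suppose $\tau$ meets $\ell_0$ but misses $\ell_1$; I will derive a contradiction.

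First I would observe that since $a$ acts cocompactly on its minset (it translates along the $\BR$ factor), and $\tau \cap \ell_0 \neq \emptyset$, the $\la a \ra$--orbit of $\tau$ meets $\ell_0$ infinitely often, spaced exactly $\ell_{\CC}(a)$ apart along $\ell_0$; in particular $\ell_0$ crosses from one side of $\tau$ to the other (this uses that a track separates $\CC$ and that a single bounded convex track cannot contain the whole line $\ell_0$). Now pick a path $\delta$ in $Y$ from $y_0$ to $y_1$; this gives a path $\delta \times \{t\}$ in the minset from a point of $\ell_0$ to a point of $\ell_1$, for each fixed $t$. The key geometric point is that the ``side of $\tau$'' containing a point varies continuously (each complementary component of $\tau$ is open, being the complement of a closed set, and $\CC$ minus $\tau$ has exactly two components since $\tau$ separates and $\CC$ is connected). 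Using the product structure, as $t$ ranges over $\BR$ the point $(y_1, t) \in \ell_1$ must lie entirely on one side of $\tau$ (since $\ell_1$ is connected and disjoint from $\tau$), while $\ell_0$ has points on both sides. Translating $\tau$ by powers of $a$ and using convexity of $\tau$, I would then locate two translates $a^m \tau$, $a^{m+1}\tau$ whose intersections with $\ell_0$ are consecutive, so the subsegment of $\ell_0$ between them is a ``fundamental domain'' separated off by these two tracks; the corresponding points of $\ell_1$ then lie between $a^m\tau$ and $a^{m+1}\tau$ as well (by the product/convexity structure and the fact that consecutive translates of a separating convex set cut $\CC$ into bands), forcing $a^m \tau$ or $a^{m+1}\tau$, hence $\tau$ itself, to meet $\ell_1$ — a contradiction.

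The main obstacle I anticipate is making the separation/continuity argument fully rigorous in the CAT(0) setting: specifically, justifying that the map sending a point of $\CC \setminus \tau$ to its complementary component is locally constant (which follows from each component being open), and that the product decomposition $M(a) = Y \times \BR$ interacts correctly with $\tau \cap M(a)$ — a priori $\tau \cap M(a)$ need not respect the product, so I cannot simply say ``$\tau \cap M(a)$ is $(\tau\cap Y) \times \BR$.'' The clean way around this is to argue purely with connectedness: $\ell_1$ is connected and avoids $\tau$, so it lies in a single component of $\CC \setminus \tau$; $\ell_0$ meets $\tau$; by equivariance $\ell_0$ is not contained in the closure of a single component (else $a^n\tau$ would eventually be disjoint from $\ell_0$, contradicting $\tau \cap \ell_0 \neq \emptyset$ being $\la a\ra$--translated along $\ell_0$). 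Finally, connect $\ell_0$ to $\ell_1$ by a geodesic in the convex set $M(a)$: this geodesic starts at a point of $\ell_0$, which we may take on the opposite side of $\tau$ from $\ell_1$, and ends in $\ell_1$; it therefore crosses $\tau$, placing a point of the geodesic — and hence, after sliding along the $\BR$--direction of the product and using that $a$ translates $\ell_0$ across $\tau$ — on $\tau\cap \ell_1$ after a suitable $\la a \ra$--translation. I expect the bookkeeping in this last step to be the delicate part, but no deep new ingredient is needed beyond convexity of $\tau$, the separation property, and the cocompact $\la a\ra$--action on $M(a)$.
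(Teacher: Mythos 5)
Your overall strategy is the one the paper uses: exploit the product structure $Y\times\BR$ of the minset, the separation and convexity of tracks, and then exhibit a path joining the two ends of $\ell_0$ that avoids $\tau$ but whose endpoints $\tau$ separates. However, both of the steps you flag as delicate are in fact where your write-up breaks down, and neither is repaired by the workarounds you propose. First, your justification that $\ell_0$ meets both complementary components of $\tau$ is a non sequitur: since $\ell_0$ is $\la a\ra$--invariant, every translate $a^n\tau$ meets $\ell_0$ (at $a^n(\tau\cap\ell_0)$) no matter which components $\ell_0$ visits, so ``else $a^n\tau$ would eventually be disjoint from $\ell_0$'' does not follow from $\ell_0$ lying in the closure of one component. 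The correct reason is specific to what a track is: $\tau=\{p\}\times\CC_p$ for a midpoint $p$ of an edge of $T$, the components of $\CC-\tau$ lie over the components of $T-\{p\}$, and the two ends of $\ell_0$ project to the two ends of the axis of $a$ in $T$ on opposite sides of $p$; together with convexity (so $\ell_0\cap\tau$ is connected and bounded) this places the two ends of $\ell_0$ in different components of $\CC-\tau$.

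Second, your ``clean way around'' the failure of $\tau\cap(Y\times\BR)$ to be a product does not close the argument: a geodesic from $\ell_0$ to $\ell_1$ crossing $\tau$ somewhere in the minset is not a contradiction, and a point of $\tau$ cannot be ``slid along the $\BR$--direction'' or moved by $\la a\ra$ while remaining on $\tau$ (that produces a point of $a^n\tau$, not of $\tau$). The missing ingredient is the finiteness of tracks: since $\tau$ is a finite subtree, $\tau\cap(Y\times\BR)\subset Y\times[s,t]$ for some finite $s<t$. One then routes the connecting path through the levels $s-1$ and $t+1$, namely $\{y_0\}\times\{s-1\}\to\{y_1\}\times\{s-1\}\to\{y_1\}\times\{t+1\}\to\{y_0\}\times\{t+1\}$: the two horizontal legs avoid $\tau$ because they lie outside the slab $Y\times[s,t]$, and the vertical leg avoids $\tau$ because it lies on $\ell_1$, which was assumed disjoint from $\tau$. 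Since $\tau$ separates the endpoints $\{y_0\}\times\{s-1\}$ and $\{y_0\}\times\{t+1\}$ by the first step, this is the desired contradiction. With these two repairs your argument becomes the paper's proof; without them the key claims are asserted rather than established.
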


\begin{proof}
  Let $\tau = \{ p \} \times \CC_p$ be a track for $T$ that intersects
  an axis for $a$.  Let $Y \times \BR \subset \CC$ be the minset for
  $a$.  As tracks and axes are convex, their intersection is
  convex as well, and hence connected.  Moreover, as tracks are finite,
  there are $s,t \in \BR$ such that $\left(Y \times \BR\right) \cap
  \tau \subset Y \times [s,t]$.

 Let  $\{ y_0 \} \times \BR \subset Y \times \BR$ be an axis of $a$ that intersects $\tau$. 
  As $\left(\{ y_0 \} \times \BR\right) \cap \tau$ is connected, we
  have that $\{ y_0 \} \times (-\infty,s)$ and $\{y_0 \} \times (t
  ,\infty)$ project to different components of $T - \{ p \}$.  Hence
  $\tau$ separates $\{ y_0 \} \times \{ s-1 \}$ from $\{ y_0 \} \times
  \{ t+1 \}$ in $\CC$.  If there were an axis, say $\{ y_1 \} \times
  \BR$, that did not intersect $\tau$, then the concatenation of the
  fiber-wise paths
  \[\{y_0 \} \times \{ s-1 \} \to \{y_1 \} \times \{ s-1 \} \to \{y_1
  \} \times \{ t+1 \} \to \{y_0 \} \times \{ t+1 \}\] would be a path
  that connected $\{ y_0 \} \times \{ s-1 \}$ to $\{y_0 \} \times \{
  t+1 \}$ avoiding $\tau$, which is a contradiction as $\tau$
  separates these points.  Thus every axis for $a$ intersects $\tau$.
\end{proof}

\begin{lemma}\label{lem:wd}
  Let $a \in F_k$ be a nontrivial element, $T,T' \in CV_k$ and
  consider the core $\CC \subset T \times T'$.  Fix a track $\tau'$
  for $T'$ that intersects an axis of $a$ in $\CC$.  Let $\tau_0$ and
  $\tau_1$ be two tracks for $T$ that intersect an axis of $a$.  Then:
 \[ \Bigl| |\tau' \cap \la a \ra \tau_0 | - |\tau' \cap \la a \ra
 \tau_1 | \Bigr| \leq 1. \]
\end{lemma}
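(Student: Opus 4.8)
The plan is to compare the two intersection counts $|\tau' \cap \la a \ra \tau_0|$ and $|\tau' \cap \la a \ra \tau_1|$ by interpolating between the tracks $\tau_0$ and $\tau_1$, exploiting that all of the relevant objects are convex and that tracks separate $\CC$. Write $\tau_i = \{p_i\} \times \CC_{p_i}$ for midpoints $p_0, p_1$ of edges of $T$, and let $\tau' = \{p'\} \times \CC_{p'}$ for $p'$ the midpoint of an edge of $T'$. First I would recall, via Lemma \ref{lem:gtwist-wd}, that we may fix a single axis $A = \{y_0\} \times \BR$ of $a$ in $\CC$ that meets all three of $\tau_0$, $\tau_1$, and $\tau'$ (any axis meeting one meets all). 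Each translate $a^m \tau_i$ meets $A$ in a connected (convex) arc, and $a^m\tau'$ meets $A$ in a connected arc; the count $|\tau' \cap \la a\ra \tau_i|$ is the number of integers $m$ for which $a^m\tau_i$ and $\tau'$ actually cross inside $\CC$. The key geometric observation is that along the edge-path in $T$ from $p_0$ to $p_1$, the tracks sweep across $T$, and each time the path crosses a vertex of $T$ the track changes in a controlled way; but because we only care about intersections with the single track $\tau'$ for $T'$, the relevant combinatorial quantity is how the projections of $\la a\ra \tau_0$ and $\la a\ra \tau_1$ to the axis direction are positioned relative to the projection of $\tau'$.

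The main step is to set up the right ``crossing'' count and show it changes by at most $1$ when we pass from $\tau_0$ to $\tau_1$. I would argue as follows. Parametrize $A$ by $\BR$ in the direction of translation by $a$, so $a$ acts as translation by $\ell_\CC(a)$. Let $J' = \tau' \cap A$, a compact interval in $A$, and let $J_i = \tau_i \cap A$, also a compact interval. Then $a^m \tau'$ meets $A$ in $J' + m\ell_\CC(a)$, and $\tau_i$ crosses $a^m \tau'$ in $\CC$ iff these intervals are ``linked'' in the separation sense — because both tracks separate $\CC$ and are convex, crossing in $\CC$ is detected on the axis: $\tau_i$ separates the two ends of $A$ that lie beyond $J_i$, and similarly for $a^m\tau'$, so the two tracks cross iff $J_i$ and $J' + m\ell_\CC(a)$ overlap in a specific nested/linked fashion (this is exactly the planar picture of Figure \ref{fig:rtwist} transported into the CAT(0) setting). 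Consequently $|\tau' \cap \la a\ra \tau_i|$ is the number of integers $m$ with $J' + m\ell_\CC(a)$ linked with $J_i$, which is either $\lceil |J_i|/\ell_\CC(a) \rceil$ or $\lfloor |J_i|/\ell_\CC(a)\rfloor$ type of quantity — more precisely it depends only on the length $|J_i|$ and the position of $J_i$ modulo $\ell_\CC(a)$, and as the interval $J_i$ is moved continuously (in length and position) the count changes by at most $1$ at a time and never jumps by more than $1$ between any two configurations whose lengths and offsets one can reach by a monotone sweep. Since the tracks $\tau_0$ and $\tau_1$ are connected by sliding $p$ along the edge-path from $p_0$ to $p_1$ in $T$ and the arcs $J_i = \tau_i \cap A$ vary through at most the ``swept'' family, the two extreme counts differ by at most $1$.

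I expect the main obstacle to be the last sentence of the previous paragraph made rigorous: controlling how $\tau \cap A$ changes as $p$ slides along $T$, in particular across vertices of $T$, where the slice $\CC_p$ (and hence the track) can change non-continuously. The clean way around this, which I would pursue, is \emph{not} to try to vary $\tau$ continuously but instead to argue directly with two fixed tracks $\tau_0, \tau_1$: since they are disjoint or nested subtrees of $\CC$ (tracks for the same tree $T$ at different points are nested by the structure of slices — $\CC_{p_0}$ and $\CC_{p_1}$ are related by inclusion of sub-slices as one moves in $T$, and in any case their axis-arcs $J_0, J_1$ are either nested or their difference is controlled by a single edge), the symmetric difference of $\la a\ra\tau_0$ and $\la a\ra \tau_1$, intersected with $\tau'$, consists of at most one crossing. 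That is, I would show $|J_0 \triangle J_1|$ contributes at most one extra or one fewer linked translate, either because $J_0 \subseteq J_1$ (so only extra translates, and the difference in length is less than $\ell_\CC(a)$ giving at most one more linked copy) or by a direct separation argument: a translate $a^m\tau'$ crossing $\tau_1$ but not $\tau_0$ must have its axis-arc wedged in the short region between $J_0$ and $J_1$, and two such translates $a^m\tau', a^{m'}\tau'$ with $m \neq m'$ cannot both fit because they are $\ge \ell_\CC(a)$ apart while the region has length $< \ell_\CC(a)$. This last estimate — that the axis-arc of a track for $T$ changes by length less than one full period $\ell_\CC(a)$ of $a$ as we move between adjacent edges, or more robustly a clean bound on $|J_0 \triangle J_1|$ — is the crux, and is where I would spend the real work, likely invoking convexity of slices and the separation property (1) of tracks together with the product structure $Y \times \BR$ of the minset.
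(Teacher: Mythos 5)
There is a genuine gap. Your argument hinges on the claim that a track $\tau_i$ for $T$ and a translate $a^m\tau'$ of a track for $T'$ cross in $\CC$ \emph{if and only if} their intersection arcs with the single chosen axis $A=\{y_0\}\times\BR$ are linked/overlapping. Only one direction of this is true. A track for $T$ and a track for $T'$ meet in at most one point of $\CC$, namely the point $(p,a^mp')\in T\times T'$ if it lies in the core, and there is no reason for that point to lie on the particular axis $A$: the minset of $a$ is $Y\times\BR$ with $Y$ possibly large, so the two tracks can intersect far from $\{y_0\}\times\BR$ while their arcs $J_i$ and $J'+m\ell$ on $A$ are disjoint. (What separation does give you is: if $J'_m$ straddles $J_i$ then the tracks must meet; the converse fails.) Since your entire count is computed from these arcs, the reduction to a one-dimensional linking picture is not justified, and you acknowledge but do not supply the missing work. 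Your proposed repair also misfires on two points: tracks for distinct points $p_0\neq p_1$ of $T$ are \emph{disjoint} (they have different first coordinates), not nested, so $J_0$ and $J_1$ are disjoint arcs rather than nested ones; and without first replacing $\tau_1$ by a suitable $a^i\tau_1$, the gap between $J_0$ and $J_1$ on $A$ can be much larger than one period of $a$.

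The paper's proof avoids the axis entirely and works with separation in $\CC$ itself, which is the fix you want. First one shows that the $n$ translates of $\tau_0$ met by $\tau'$ are \emph{consecutive}, because $a^m\tau_0$ separates $a^{m-r}\tau_0$ from $a^{m+s}\tau_0$ and $\tau'$ is connected; normalize so these are $\tau_0,\dots,a^{n-1}\tau_0$. Then, unless $\tau_1\in\la a\ra\tau_0$, one translates $\tau_1$ so that it separates $\tau_0$ from $a\tau_0$ (possible because $p_0$ and $p_1$ both lie on the axis of $a$ in $T$, so their $\la a\ra$--orbits interleave along that line, and separation of points in $T$ pulls back to separation of tracks in $\CC$). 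Now for each $i=0,\dots,n-2$, the track $a^i\tau_1$ separates $a^i\tau_0$ from $a^{i+1}\tau_0$, both of which meet the connected set $\tau'$; hence $\tau'$ meets $a^i\tau_1$, giving at least $n-1$ intersections. This is the interleaving idea your last paragraph gestures at, but implemented with the separation property of tracks and connectedness of $\tau'$ rather than with lengths of arcs on a chosen axis; if you recast your argument in those terms the proof goes through.
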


\begin{proof}
   Suppose $|\tau' \cap \la a
  \ra \tau_0| = n$.  We will show that $|\tau' \cap \la a \ra \tau_1|
  \geq n-1$.  The statement of the lemma follows after interchanging
  $\tau_0$ and $\tau_1$ and applying the same argument.

  Since a track for $T$ and a track for $T'$ can intersect at most
  once in $\CC$, the track $\tau'$ intersects exactly $n$ $\la a
  \ra$--translates of $\tau_0$. We claim that there is an $i$ such
  that $\tau'$ intersects $a^{i+j} \tau_0$ for $j = 0,\ldots, n-1$.
  Indeed, this follows as $\tau'$ is connected, and as $a^m \tau_0$
  separates $a^{m-r}\tau_0$ from $a^{m+s}\tau_0$ for all $m$ and
  positive $r$ and $s$.  Replacing $\tau_0$ by $a^i\tau_0$, we can
  assume that $\tau'$ intersects $\tau_0,\ldots a^{n-1}\tau_0$.
  
  If $\tau_1 = a^i\tau_0$ for some $i$, then the statement is obvious. 
  Otherwise, as $|\tau' \cap \la a \ra\tau_1 |$ only depends on the
  orbit of $\tau_1$ under $a$, we can replace $\tau_1$ by $a^i \tau_1$
  for some $i$ to assume that $\tau_1$ separates $\tau_0$ from
  $a\tau_0$.

 We claim that $\tau'$ intersects $\tau_1,\ldots, a^{n-2}\tau_1$.
 Indeed, since $\tau'$ is connected and since $a^i \tau_1$ separates
 $a^i\tau_0$ from $a^{i+1}\tau_0$, both of which intersect $\tau'$ for
 $i = 0,\ldots n-2$, the track $\tau'$ must intersect $a^i \tau_1$.
\end{proof}

As $|\tau' \cap \la a \ra \tau| = |\la a \ra \tau' \cap \tau|$, Lemma
\ref{lem:wd} shows that if $\tau'_0$ and $\tau'_1$ are tracks for $T'$
that intersect an axis of $a$, and likewise $\tau_0$ and $\tau_1$ are
tracks for $T$ that intersect an axis of $a$, then:
\[ \Bigl| |\tau_0' \cap \la a \ra \tau_0 | - |\tau_1' \cap \la a \ra
\tau_1 | \Bigr| \leq 2. \] With this bound we can define the relative
twist number.  By $Tr_a(T)$ we denote the set of tracks for $T$ in
$\CC$ that intersect an (and hence every) axis of $a$.  We
define the set $Tr_a(T')$ similarly.

\begin{definition}\label{def:gtwist}
  Given $T,T' \in CV_k$ and a nontrivial element $a \in F_k$, define
  the \emph{twist of $T$ and $T'$ relative to $a$} as:
\begin{equation*}
  \tau_a(T,T') = \max_{\tau' \in Tr_a(T'),\tau \in Tr_a(T)}
  |\tau' \cap \la a \ra \tau|.
\end{equation*}
\end{definition}

We remark that this number is always finite.  Indeed, as tracks are
finite, the quantities we are maximizing over are finite. Then by the
above discussion, there are only finitely many distinct possibilities
for these numbers.

The significance of the relative twist number to the geodesic in
$CV_k$ connecting two marked graphs is the following, to be used in
Section \ref{sc:smallcycle} to prove Theorem \ref{th:geo-twist}:

\begin{proposition}\label{prop:ntwist-vp}
  Suppose $G,G' \in CV_k$ with $d = d_L(G,G')$ such that $\tau_a(G,G')
  \geq n$ for some nontrivial $a \in F_k$.  Then for every change of
  marking map $g \co G \to G'$, there is a vanishing path $\gamma
  \subset G$ that $n$--covers $a$.
\end{proposition}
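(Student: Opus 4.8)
The plan is to translate the hypothesis $\tau_a(G,G') \geq n$ into a statement about the Guirardel core $\CC \subset \widetilde{G} \times \widetilde{G}'$, and then use property (3) of Proposition \ref{prop:core} to produce the vanishing path. By Definition \ref{def:gtwist}, there are tracks $\tau \in Tr_a(\widetilde G)$ and $\tau' \in Tr_a(\widetilde G')$ with $|\tau' \cap \la a \ra \tau| \geq n$. Here $\tau' = \{p'\} \times \CC_{p'}$ for $p'$ the midpoint of an edge of $\widetilde G'$, so in particular $p'$ is not a vertex, and $\CC_{p'}$ is a convex subtree of $\widetilde G$ (a slice of the core). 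The key geometric observation is that the condition $|\tau' \cap \la a \ra \tau| \geq n$ forces the arc $\CC_{p'} \subset \widetilde G$ to cross $n$ distinct $\la a \ra$--translates of the point $p \in \widetilde G$ underlying the track $\tau$, and these translates $p, ap, \dots, a^{n-1}p$ (after replacing $\tau$ by an $\la a\ra$--translate, as in the proof of Lemma \ref{lem:wd}) all lie on a common segment of the axis $\widetilde G^{\la a\ra}$, spaced $\ell_{\widetilde G}(a)$ apart. Hence $\CC_{p'}$ contains a subpath of $\widetilde G^{\la a\ra}$ of length at least $(n-1)\ell_{\widetilde G}(a)$; with a little care (using that $p'$ is an interior point, so the track is slightly fatter than the bare intersection points) one gets a subarc $\gamma \subset \CC_{p'}$ that $n$--covers $a$ in the sense of Definition \ref{def:ncover}.

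Once $\gamma \subset \CC_{p'}$ is in hand, property (3) of Proposition \ref{prop:core} applies directly: since $p'$ is not a vertex of $\widetilde G'$, the arc $\gamma$ is contained in a vanishing path of any change of marking map $f \co \widetilde G \to \widetilde G'$. Projecting down to $G$, the image of $\gamma$ is a path (still $n$--covering $a$, since $n$--covering is defined via lifts) contained in a vanishing path of the induced map $g \co G \to G'$. Since the given change of marking map $g$ is arbitrary, this proves the proposition. I would also note that $d = d_L(G,G')$ plays no essential role in the argument beyond ensuring $G, G' \in CV_k$ so that Proposition \ref{prop:core} applies; it is recorded in the statement only because Theorem \ref{th:geo-twist} needs it.

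The main obstacle is the bookkeeping in the first step: carefully converting ``$\tau'$ meets $n$ distinct $\la a \ra$--translates of $\tau$'' into ``$\CC_{p'}$ contains a segment of $\widetilde G^{\la a\ra}$ that genuinely $n$--covers $a$.'' Two points need attention. First, a track for $\widetilde G$ and a track for $\widetilde G'$ meet in at most one point of $\CC$, so each of the $n$ translates $a^j \tau$ contributes exactly one point to $\tau'$, and these points project to the $n$ distinct points $a^j p$ of $\widetilde G$ lying in $\CC_{p'}$; by convexity of $\CC_{p'}$ and the fact that consecutive $a^j p$ lie on the axis, the whole geodesic segment $[p, a^{n-1}p] \subset \widetilde G^{\la a\ra}$ is contained in $\CC_{p'}$, and its length is exactly $(n-1)\ell_{\widetilde G}(a)$. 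To upgrade from $(n-1)$--covering to $n$--covering one uses that $p$ is a \emph{midpoint} of an edge, so $\CC_{p'}$ actually extends a little beyond $p$ and beyond $a^{n-1}p$ along the axis — far enough to pick up a full extra period. Alternatively, and more cleanly, one observes that $\tau'$ itself, being the slice over the midpoint $p'$, meets $a^{-1}\tau$ or $a^n\tau$ whenever it meets all of $\tau, \dots, a^{n-1}\tau$ is close to failing; I would phrase the argument so that the $n$ in the hypothesis matches the $n$ in the conclusion exactly, absorbing the off-by-one into the interior-point slack. The rest is a direct invocation of the cited results.
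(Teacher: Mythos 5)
Your proposal follows the paper's proof almost verbatim: pass to the core $\CC\subset\widetilde G\times\widetilde G'$, choose tracks $\tau\in Tr_a(\widetilde G)$ and $\tau'\in Tr_a(\widetilde G')$ realizing $\tau_a(G,G')$, observe that the path $\rho\subset\tau'=\CC_{p'}\times\{p'\}$ joining the two extreme intersection points lies in the slice $\CC_{p'}\subset\widetilde G$ and covers $a$, and then invoke Proposition \ref{prop:core}(3). The identification you describe is exactly the one the paper uses: since the tracks meet an axis of $a$ in $\CC$, the points $p$ and $p'$ lie on the axes of $a$ in $\widetilde G$ and $\widetilde G'$, so writing $x_0=\tau'\cap a^{m_0}\tau$ and $x_1=\tau'\cap a^{m_1}\tau$ one has $x_1=a^{m_1-m_0}x_0$ with both points on $\widetilde G^{\la a\ra}$, whence $\rho\supset[x_0,x_1]$ covers $a$ at least $m_1-m_0$ times.

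The one place you deviate is the off-by-one, and your patch there does not work. You correctly note that meeting $n$ consecutive translates $a^{m_0}\tau,\dots,a^{m_1}\tau$ gives $m_1-m_0=n-1$, hence a priori only $(n-1)$--covering; but the claim that the slice ``extends a little beyond $p$ and beyond $a^{n-1}p$ --- far enough to pick up a full extra period'' is unjustified. Convexity of $\CC_{p'}$ and the fact that $p$ is an interior point of an edge give some overhang $\epsilon>0$ at each end, not $\ell_{\widetilde G}(a)/2$, and an arbitrarily small overhang cannot upgrade $(n-1)\ell_{\widetilde G}(a)$ of overlap to $n\ell_{\widetilde G}(a)$; your ``alternative'' observation about $\tau'$ meeting $a^{-1}\tau$ or $a^{n}\tau$ is likewise not forced. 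The paper itself simply asserts $\tau_a(G,G')=m_1-m_0\geq n$ and concludes; if one insists on counting the number of translates met as $m_1-m_0+1$, the honest output of the argument is an $(n-1)$--covering vanishing path, and the clean repair is to shift the constant in the hypothesis (harmless, since Theorem \ref{th:geo-twist} already carries slack in its constants), not to appeal to interior-point slack. Everything else in your outline, including the remark that $d$ plays no role and the projection of $\gamma$ down to $G$, matches the paper.
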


\begin{proof}
  Let $\widetilde{G}$ and $\widetilde{G}'$ be the universal covers of
  $G$ and $G'$ respectively, and consider the core $\CC \subset
  \widetilde{G} \times \widetilde{G}'$.  Fix an axis of $a$, and
  tracks $\tau \in Tr_a(\widetilde{G}), \, \tau' \in
  Tr_a(\widetilde{G}')$ such that $\tau_a(G,G') = |\tau' \cap \la
  a \ra\tau|$.

  Let $m_0$ and $m_1$ be the least and greatest integer, respectively,
  such that $\tau' \cap a^{m_0}\tau \neq \emptyset$ and $\tau' \cap a^{m_1}\tau \neq
  \emptyset$.  Thus $\tau_a(G,G') = m_1 - m_0 \geq n$.  Denote $x_0 =
  \tau' \cap a^{m_0}\tau$ and $x_1 = \tau' \cap a^{m_1}\tau$ and let
  $\rho$ be the path in $\tau'$ connecting $x_0$ to $x_1$.  As $\tau'
  = \CC_{p'} \times \{ p' \}$ for some point $p' \in \widetilde{G}'$,
  we can consider $\rho$ as a path in $\CC_{p'} \subset
  \widetilde{G}$. Notice that the endpoints of $\rho$, also denoted $x_0$ and $x_1$, are on the
  axis for $a$, and that $a^{m_1-m_0}x_0 = x_1$.  
  Thus $a$ is $n$--covered by $\rho$. By Proposition \ref{prop:core}(3), the path $\rho$ is contained in a
  vanishing path $\gamma$ for any change of marking map $G \to G'$.
  As $\gamma$ contains $\rho$, the vanishing path $\gamma$ $n$--covers
  $a$ as well.
\end{proof}


\section{Algebraic relative twisting}\label{sc:atwist}

In this section we give our algebraic interpretation of relative twisting and develop some consequences that will be useful for
applications in later sections.  The key result is Proposition
\ref{prop:ncover-vp}, which is used to prove Theorem
\ref{th:alg-twist}.

\begin{definition}\label{def:woMeasure}
  Given $T \in \overline{cv}_k$, a lamination $\Lambda
  \subset \partial^2 F_k$, and an element $a \in F_k$, if
  $\ell_T(a) \neq 0$, then we define the \emph{twist of $T$ and
    $\Lambda$ relative to a} to be:
\begin{equation*}
  \tau_a(T,\Lambda) = \sup \left\{  
        \frac{L_T(\alpha_T \cap T^{\la a \ra})}{\ell_T(a)} \, \Big| \, 
        \alpha \in \Lambda \right\}.
\end{equation*}
If $\ell_T(a) = 0$, then define $\tau_a(T,\Lambda) = 0$.
\end{definition}

Recall that given $\alpha = (x_1,x_2) \in \bd^2 F_k$, we have that $\alpha_T$ is equal to 
$(\CQ(x_1),\CQ(x_2)) \in \bd^2 T$ if $\alpha$ is not $T$--bounded, and is equal to 
the empty set otherwise.  We insist that $L_T(\emptyset) = 0$.

\begin{remark}
  We allow for the possibility that $\tau_a(T,\Lambda) = \infty$.
  This occurs in particular for the rational lamination $\Lambda(a)$,
  when $\ell_T(a) \neq 0$.
\end{remark}

Central to our analysis is the following proposition: 

\begin{proposition}\label{prop:lowersemicontinuous}
  Suppose $a \in F_k$, $T \in \overline{cv}_k$, and that $\Lambda$ is
  a lamination containing no $T$--bounded geodesic. If $\{ T_i \}$ is
  a sequence of trees in $cv_k$ converging to $T$, and $\{ \Lambda_i
  \}$ is a sequence of laminations converging to $\Lambda$, then:
  \[ \lim_{i \to \infty} \tau_a(T_i,\Lambda_i) \geq \tau_a(T,\Lambda). \]
\end{proposition}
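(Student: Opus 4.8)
The plan is to establish a semicontinuity statement for the overlap length $L_T(\alpha_T \cap T^{\la a\ra})$ and then take suprema. Fix a leaf $\alpha = (x_1, x_2) \in \Lambda$ that is not $T$--bounded; since $\Lambda_i \to \Lambda$ in the Hausdorff topology on $\bd^2 F_k$, I can choose leaves $\alpha_i = (x_1^i, x_2^i) \in \Lambda_i$ with $\alpha_i \to \alpha$. The goal is to show
\[
\liminf_{i \to \infty} \frac{L_{T_i}(\alpha_{i,T_i} \cap T_i^{\la a\ra})}{\ell_{T_i}(a)} \geq \frac{L_T(\alpha_T \cap T^{\la a\ra})}{\ell_T(a)}.
\]
Since $T_i \to T$ in the axes topology, $\ell_{T_i}(a) \to \ell_T(a)$, and because $\alpha$ is not $T$--bounded, $\ell_T(a) \neq 0$ (as $\tau_a$ would be defined to be $0$ otherwise, but more to the point the supremum being approached is over leaves contributing positively, so we may assume $\ell_T(a) > 0$). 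So it suffices to prove the corresponding inequality for the numerators: $\liminf_i L_{T_i}(\alpha_{i,T_i} \cap T_i^{\la a\ra}) \geq L_T(\alpha_T \cap T^{\la a\ra})$.

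To do this I would exploit the description of $\CQ$ via change-of-marking maps. Pick a basepoint tree $T_0 \in cv_k$ and change-of-marking maps $f_i \co T_0 \to T_i$ and $f \co T_0 \to T$ which we may take to converge in an appropriate sense; alternatively, and more robustly, approximate from the definition: choose a large finite subsegment $J$ of the overlap $\alpha_T \cap T^{\la a\ra}$ in $T$ with $L_T(J) \geq L_T(\alpha_T \cap T^{\la a\ra}) - \epsilon$. The endpoints of $\alpha_T = (\CQ(x_1), \CQ(x_2))$ are obtained as limits of the images under $f$ of rays in $T_0$ representing $x_1, x_2$ (Proposition \ref{prop:Q}). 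Using that $\alpha_i \to \alpha$ and $f_i \to f$, long initial segments of the rays in $T_0$ representing $x_1^i, x_2^i$ agree with those for $x_1, x_2$, and their $f_i$--images track the $f$--images up to bounded backtracking (Subsection \ref{ssc:bbt}), which is uniformly controlled since $BBT(f_i) \leq \Lip(f_i)$ and the Lipschitz constants stay bounded along a convergent sequence. Hence for $i$ large the geodesic $\alpha_{i,T_i}$ in $T_i$ fellow-travels $\alpha_T$ over a region projecting (via the near-isometry $T_i \to T$ coming from convergence in $cv_k$, or via comparison of length functions) onto a segment of length at least $L_T(J) - \epsilon'$, and this segment lies within bounded distance of, hence runs essentially parallel to, the axis $T_i^{\la a\ra}$ — whose translation length converges to $\ell_T(a) > 0$, so the axis is ``stable.'' This forces $L_{T_i}(\alpha_{i,T_i} \cap T_i^{\la a\ra}) \geq L_T(\alpha_T \cap T^{\la a\ra}) - 2\epsilon'$ for $i$ large.

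Finally, given any $c < \tau_a(T,\Lambda)$, choose a leaf $\alpha \in \Lambda$ realizing $\frac{L_T(\alpha_T \cap T^{\la a\ra})}{\ell_T(a)} > c$; the argument above produces leaves $\alpha_i \in \Lambda_i$ with $\frac{L_{T_i}(\alpha_{i,T_i} \cap T_i^{\la a\ra})}{\ell_{T_i}(a)} > c$ for all large $i$, so $\tau_a(T_i, \Lambda_i) > c$ eventually, whence $\lim_i \tau_a(T_i, \Lambda_i) \geq c$; letting $c \nearrow \tau_a(T,\Lambda)$ finishes it. The main obstacle I anticipate is making rigorous the claim that $\alpha_{i,T_i}$ genuinely fellow-travels $\alpha_T$ for a long time: the map $\CQ$ is only defined for trees with dense orbits and trivial arc stabilizers, so while $T = T_+(\phi)$ is fine, the approximating trees $T_i \in cv_k$ are simplicial and one must phrase the statement so that $\alpha_{i,T_i}$ is simply the bi-infinite geodesic in $T_i$ with endpoints $x_1^i, x_2^i$ (which is unambiguous in a simplicial tree since $L^1(T_i) = \emptyset$). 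The real work is a uniform bounded-backtracking / continuity estimate comparing the $f_i$--images of rays in $T_0$ with the $f$--image, exactly the kind of argument underlying Proposition \ref{prop:Q}, carried out with the sequence $T_i \to T$; I would isolate this as the technical heart of the proof and reduce everything else to bookkeeping with $\epsilon$'s.
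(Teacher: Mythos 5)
Your proposal follows essentially the same route as the paper: reduce to lower semicontinuity of the overlap length $L_T(\alpha_T \cap T^{\la a \ra})$, establish it by approximating $\alpha_T$ and the axis of $a$ by finite segments pushed forward from a fixed base tree $T_0$ under change-of-marking maps, and control the errors via bounded backtracking together with convergence of length functions --- the paper packages exactly this as a claim that $L_{T'}(\alpha_{T'} \cap \beta_{T'})$ is close to $L_T(\alpha_T \cap \beta_T)$ for $T'$ near $T$, and then perturbs the lamination separately. The only organizational difference is that the paper treats the case $\tau_a(T,\Lambda) = \infty$ separately, which your ``for every $c < \tau_a(T,\Lambda)$'' formulation absorbs.
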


\begin{proof}
  The proposition is obviously true when $\ell_T(a) = 0$, and so we
  assume that $\ell_T(a) > 0$.
  
  We proceed with the following:
  \begin{claim*}
    If $T \in \overline{cv}_k$, and if $\alpha, \beta \in \bd^2 F_k$
    are not $T$--bounded and
    $\alpha(\infty),\alpha(-\infty),\beta(\infty),\beta(-\infty)$ are
    four distinct points, then for sufficiently close $T' \in cv_k$,
    we have $L_{T'}(\alpha_{T'} \cap \beta_{T'})$ close to
    $L_T(\alpha_T \cap \beta_T)$.
  \end{claim*}
  \begin{proof}
    As $\CQ$ is injective on $\bd F_k - L^1(T)$, we have that
    $\alpha_T \cap \beta_T$ is compact set.
    
    Fix a tree $T_0 \in cv_k$, a map $f\co T_0 \to T$ that is linear
    on edges, and elements $a_i^\pm \in T_0$ so that $[a_i^-,a_i^+]
    \to \alpha_{T_0}$. Then $\alpha_i = [f(a_i^-),f(a_i^+)] \to
    \alpha_T$; in particular, the overlap of $\alpha_i$ and $\alpha_T$
    can be made arbitrarily large.  Similarly define $b_i^\pm \in T_0$
    and $\beta_i = [f(b_i^-),f(b_i^+)]$ so that, as before, we have
    $\beta_i \to \beta_T$.

    Fix a $T' \in cv_k$ and an equivariant map $f' \co T_0 \to T'$,
    linear on edges.  As before, we have $\alpha'_i =
    [f'(a_i^-),f'(a_i^+)] \to \alpha_{T'}$ and $\beta'_i =
    [f'(b_i^-),f'(b_i^+)] \to \beta_{T'}$.

    Now choose $n$ large enough so that each of $\alpha_n \cap
    \alpha_T$ and $\beta_n \cap \beta_T$ contains $\alpha_T \cap
    \beta_T$.  Then increase $n$ if necessary so that $\alpha_n' \cap
    \alpha_{T'}$ and $\beta_n' \cap \beta_{T'}$ each contain
    $\alpha_{T'} \cap \beta_{T'}$.  For sufficiently close trees $T,
    T',$ the lengths $L_T(\alpha_n \cap \beta_n)$ and
    $L_{T'}(\alpha'_n \cap \beta'_n)$ of the overlaps are close
    \cite{ar:Clay05,ar:GL07}.  By choice of $n$, we have $L_T(\alpha_n
    \cap \beta_n) = L_T(\alpha_T \cap \beta_T)$ and $L_{T'}(\alpha'_n
    \cap \beta'_n) = L_{T'}(\alpha_{T'} \cap \beta_{T'})$.  The claim
    follows.
  \end{proof}

  We are now prepared to complete the proof of the proposition.  First
  assume that $\tau_a(T,\Lambda) \neq \infty$.  This implies that no
  geodesic in $\Lambda$ has $a^{+\infty}$ or $a^{-\infty}$ as an
  endpoint and so we can use the above claim.  Let $\epsilon$ be small
  and choose a geodesic $\alpha \in \Lambda$ so that:
  $$L_T(\alpha_T \cap T^{\la a \ra})/\ell_T(a) > \tau_a(T,\Lambda) - \epsilon.$$ 
  Then by the above:
  \[ \frac{L_{T'}(\alpha_{T'} \cap T'^{\la a \ra})}{\ell_{T'}(a)} >
  \frac{L_T(\alpha_T \cap T^{\la a \ra})}{\ell_T(a)} - \epsilon >
  \tau_a(T,\Lambda) - 2\epsilon \] for $T'$ sufficiently close to $T$.
  For $\Lambda'$ sufficiently close to $\Lambda$, there exists
  $\alpha' \in \Lambda'$ so that $\alpha_{T'} \cap T'^{\langle a
    \rangle} \subset \alpha'_{T'}$.  Thus:
  \[ \tau_a(T',\Lambda') \geq \frac{L_{T'}(\alpha_{T'} \cap T'^{\la a
      \ra})}{\ell_{T'}(a)} > \tau_a(T,\Lambda) - 2\epsilon\] for $T'$
  sufficiently close to $T$ and $\Lambda'$ sufficiently close to $\Lambda$.
  Since we obtain such an inequality for every $\epsilon$, the proposition holds.

  Suppose $\tau_a(T,\Lambda) = \infty$.  If $a^{+\infty}$ or
  $a^{-\infty}$ is an endpoint of a geodesic in $\Lambda$, then
  $\tau_a(T',\Lambda) = \infty$ for all trees $T' \in cv_k$.  Else, we
  have that for, for every $M >0$ there is a geodesic $\alpha \in
  \Lambda$ so that:
  $$\infty > L_T(\alpha_T \cap T^{\la a \ra})/\ell_T(a) > M.$$  
  Then arguing in a similar fashion as above, we have for $T'$
  sufficiently close to $T$ and $\Lambda'$ sufficiently close to
  $\Lambda$:
  \[ \tau_a(T',\Lambda') \geq \frac{L_{T'}(\alpha_{T'} \cap T'^{\la a
      \ra})}{\ell_{T'}(a)} > \frac{M}{2}. \] Since we obtain such an
  inequality for every $M$, the proposition holds.
\end{proof}
\begin{remark}\label{rm:T-bounded}
  Examples of tree, lamination pairs satisfying the hypotheses of
  Proposition \ref{prop:lowersemicontinuous} are:
  \begin{enumerate}

  \item $T_+(\phi),\Lambda_+(\phi)$ where $\phi \in \Out F_k$ is fully
    irreducible,

    \item $T,\Lambda$ where $T \in cv_k$ and $\Lambda$ is any
      lamination, and

  \item $T,\Lambda(a)$ whenever $\ell_T(a) \neq 0$. 

  \end{enumerate}
\end{remark}

For a fully irreducible element $\phi$ with large twist
$\tau_a(T_-(\phi),\Lambda_-(\phi))$ for some nontrivial $a \in F_k$,
our goal is to locate a train-track $G_0$ of $\phi$ with a vanishing
path that $n$--covers $a$, similar to Proposition
\ref{prop:ntwist-vp}.  Our tool to produce such a path is Proposition
\ref{prop:find-vp}.  First we see how to use the lamination to get the
required setup.

\begin{lemma}\label{lm:extend-segment}
  Suppose $\phi \in \Out F_k$ is fully irreducible, $g\co G \to G$ is
  a train-track representative for $\phi$, $T_+ \in \overline{cv}_k$
  is the stable tree for $\phi$, $f_g\co \widetilde{G} \to T_+$ is the
  induced map from Theorem \ref{th:Np-or-vp}, and $\ell\co \BR \to
  \widetilde{G}$ is a leaf of the unstable lamination $\Lambda_-$.
  Then for all $I \subset \BR$, there exists $I' = [x,y] \subset \BR$
  such that $I \subseteq I'$ and $f_g(\ell(x)) = f_g(\ell(y))$.
\end{lemma}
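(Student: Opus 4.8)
The plan is to use the map $\CQ$ from Proposition \ref{prop:Q}, together with the fact (Proposition \ref{prop:stableQ}) that the two endpoints of any leaf of $\Lambda_-$ are identified by $\CQ$, to show that $f_g$ collapses suitable endpoints of the leaf $\ell$. First I would recall that $f_g \co \widetilde{G} \to T_+$ is an $F_k$--equivariant map, hence (by \S\ref{ssc:bbt}) it has bounded backtracking; set $L = BBT(f_g) + 1$. The key point is to relate the ``telescoping'' of $f_g$ along the leaf $\ell$ to the map $\CQ$: since $\ell$ is a leaf of $\Lambda_- = \Lambda_+(\phi\inv)$ and $T_+ = T_+(\phi)$, Proposition \ref{prop:stableQ} gives $\CQ(\ell(\infty)) = \CQ(\ell(-\infty))$, where $\CQ \co \bd F_k \to \ol{T_+} \cup \bd T_+$. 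Thinking of $f_g$ as a model for the map $T_0 \to T_+$ appearing in Proposition \ref{prop:Q}, the image $f_g(\ell(\BR))$ is a (possibly degenerate) bi-infinite path in $T_+$ whose two ends both accumulate on the single point $\CQ(\ell(\pm\infty)) \in \ol{T_+}$.

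Now given any bounded interval $I \subset \BR$, I want $I' = [x,y] \supseteq I$ with $f_g(\ell(x)) = f_g(\ell(y))$. Since both ends of $f_g(\ell(\BR))$ approach the same point $q := \CQ(\ell(\infty)) = \CQ(\ell(-\infty))$ in the metric closure, the path $[f_g(\ell(x)), f_g(\ell(y))]$ has length tending to $0$ (or the two ends coincide outright) as $x \to -\infty$ and $y \to +\infty$. Concretely: choose $x$ very negative and $y$ very positive so that $f_g(\ell(x))$ and $f_g(\ell(y))$ are both within distance, say, $\tfrac12 L$ of $q$ in $\ol{T_+}$; then $d_{T_+}(f_g(\ell(x)), f_g(\ell(y))) < L$. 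I then need to upgrade this small-distance statement to an \emph{equality} $f_g(\ell(x)) = f_g(\ell(y))$. This is where Lemma \ref{lm:surviving-subsegment} enters: if $f_g(\ell(x)) \neq f_g(\ell(y))$, the nondegenerate segment $[f_g(\ell(x)), f_g(\ell(y))] \subset T_+$ would be a tight path contained in $[f_g(\ell([x,y]))]\dagger_L$ only if it survived trimming by $L$ from each end — but it has length $< L$, so it is entirely trimmed away, meaning it is \emph{not} contained in $[f_g(\ell(\BR))]$ in any persistent way; pushing $x,y$ further out and invoking bounded backtracking (exactly as in Lemma \ref{lm:surviving-subsegment}) forces the two image points to collide. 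Since $T_+$ has dense orbits it is not simplicial, so I should be slightly careful: rather than asking for literal equality at a single $(x,y)$, I can argue that the function $(x,y) \mapsto d_{T_+}(f_g(\ell(x)), f_g(\ell(y)))$ is continuous, is $\geq $ (something positive) for $x,y$ bounded once $f_g$ moves off $q$, and tends to $0$; combined with the bounded-backtracking monotonicity from Lemma \ref{lm:surviving-subsegment}, the point-preimages $f_g\inv(\text{point})$ along $\ell$ must accumulate at both ends, and in particular there is a matched pair $x \le \inf I$, $y \ge \sup I$ with $f_g(\ell(x)) = f_g(\ell(y))$.

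The main obstacle I anticipate is making precise the step ``both ends of $f_g(\ell(\BR))$ limit onto the single point $\CQ(\ell(\pm\infty))$,'' since $f_g$ is the map from Theorem \ref{th:Np-or-vp} and not literally the map in Proposition \ref{prop:Q} — I would need either a uniqueness statement ($f_g$ is $F_k$--equivariant with bounded backtracking, and any two such maps to $T_+$ are bounded distance apart, so $\CQ$ computes the ends of $f_g(\rho)$ up to bounded error) or a direct argument that the collapsing structure of $f_g$ on the leaf $\ell$ matches $L^2_\CQ(T_+)$ via Proposition \ref{prop:stableQ}. Once that identification is in hand, the bounded-backtracking argument of Lemma \ref{lm:surviving-subsegment} does the rest essentially formally, and I would present it in that order: (1) $f_g$ has bounded backtracking, fix $L$; (2) $\CQ$ identifies the two ideal endpoints of the leaf, hence the two ends of $f_g(\ell)$ converge to a common point of $\ol{T_+}$; (3) therefore image points from far-out negative and far-out positive parameters are arbitrarily close; (4) apply Lemma \ref{lm:surviving-subsegment}/bounded backtracking to conclude they are in fact equal, giving the desired $I' = [x,y]$.
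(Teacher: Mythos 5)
Your setup coincides with the paper's: by Proposition \ref{prop:stableQ} both ends of $f_g\circ\ell$ converge to the single point $q=\CQ(\ell(\infty))=\CQ(\ell(-\infty))\in\overline{T}_+$ (the paper cites \cite[Lemma~3.4]{ar:LL03} for the fact that $f_g(\ell(a_i))\to\CQ(\ell(-\infty))$ and $f_g(\ell(b_i))\to\CQ(\ell(\infty))$, which settles the ``main obstacle'' you flag). The genuine gap is your step (4). Knowing $d_{T_+}(f_g(\ell(x)),f_g(\ell(y)))<L$ for $x\ll 0\ll y$ does not force equality, and neither bounded backtracking nor Lemma \ref{lm:surviving-subsegment} can close that gap: that lemma only says a tight path sitting deep inside $[f_g(\ell(I))]$ persists in the images of larger intervals; it contains nothing that would make two nearby image points collide. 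Your fallback (``the point-preimages along $\ell$ must accumulate at both ends, and in particular there is a matched pair'') simply asserts the conclusion --- preimages of points can accumulate at both ends without any single point having a preimage at each end.

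What actually produces the matched pair is a separation/intermediate-value argument in the $\BR$--tree. If $f_g(\ell(a_n))$ and $f_g(\ell(b_n))$ lie in the same component of $\overline{T}_+-\{q\}$, let $p$ be the point of the arc $[f_g(\ell(a_n)),f_g(\ell(b_n))]$ closest to $q$; then $p$ lies on both geodesics $[f_g(\ell(a_n)),q]$ and $[f_g(\ell(b_n)),q]$, hence separates each image point from $q$, and continuity of $f_g\circ\ell$ together with convergence to $q$ along both rays yields $a'\le a_n$ and $b'\ge b_n$ with $f_g(\ell(a'))=p=f_g(\ell(b'))$. When the two ends approach $q$ from different components one argues separately (the path either hits $q$ infinitely often, giving matched pairs outright, or finitely often, after which the previous argument is rerun on a ray). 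Moreover, in those latter cases the long interval with matched endpoints need not contain $I$, and the paper closes this with the quasi-periodicity of the leaf: translate $\ell(I)$ into the found interval by some $x\in F_k$ and pull back using equivariance of $f_g$. Both the tree-separation argument and the quasi-periodicity step are absent from your proposal, so as written it does not establish the lemma.
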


\begin{proof}
 We claim that for any $L \geq 0$, there is an interval
  $[a,b] \subseteq \BR$ such that $|b - a| \geq L$ and $f_g(\ell(a)) =
  f_g(\ell(b))$.  The lemma follows: by the quasi-periodicity of $\ell$, there is 
  then an interval $I_0
  = [a,b] \subset \BR$ such that $f_g(\ell(a)) = f_g(\ell(b))$ and
  $x\ell(I) \subseteq \ell(I_0)$.  Setting $I' =
  \ell^{-1}(x^{-1}\ell(I_0))$ 
  completes the proof. We must then just establish the claim. 

  Since $\ell$ is a leaf of the unstable lamination, we have
  $\CQ(\ell(-\infty)) = \CQ(\ell(\infty))$.  There are sequences $a_i
  \to -\infty$ and $b_i \to \infty$ such that $f_g(\ell(a_i)) \to
  \CQ(\ell(-\infty)) = \CQ(\ell(\infty))$ and $f_g(\ell(b_i)) \to
  \CQ(\ell(\infty))$ \cite[Lemma 3.4]{ar:LL03}.  Now we have two
  cases, either the sequences $\{f_g(\ell(a_i)) \}$ and $\{
  f_g(\ell(b_i)) \}$ are in the same component of $\overline{T}_+ - \{
  \CQ(\infty)\}$ or they are not.

  If the sequences are in the same component, choose $n$ with $|b_n -
  a_n| > L$. The arc $\alpha$ connecting $f_g(a_n)$ to $f_g(b_n)$ is
  then disjoint from $\CQ(\ell(\infty))$, and there is a unique point
  $p \in \alpha$ which is closest to $\CQ(\ell(\infty))$.  As
  $\overline{T}_+$ is an $\BR$--tree, $p$ is on the geodesic
  $[f_g(a_n),\CQ(\ell(-\infty))]$.  Then by continuity of $f\ell$,
  there is an $a' \leq a_n$ such that $f_g(a') = p$.  Likewise, there
  is a $b' \geq b_n$ such that $f_g(b') = p$.  Thus the inteveral
  $[a',b']$ satisfies the claim.

   Now suppose the sequences are not in the same component.  If
  $f_g(\ell(\BR))$ crosses the point $\CQ(\ell(\infty))$ infinitely
  many times, then we can find a sequence of points $a_i,b_i \in \BR$
  such that $f_g(\ell(a_i)) = f_g(\ell(b_i)) = \CQ(\ell(\infty))$ such
  that $|b_i - a_i| \to \infty$.  Indeed, there is a lower bound on
  the distance between two pre-images of $\CQ(\infty)$ in $\BR$ as
  every edge of $\widetilde{G}$ is isometrically embedded by $f_g$.  For
  large enough $i$, the interval $[a_i,b_i]$ satisfies the claim.

  Finally, suppose $f_g(\ell(\BR))$ crosses $\CQ(\ell(\BR))$ only
  finitely many times.  Let $a$ be the smallest number such that
  $f_g(\ell(a)) = \CQ(\ell(\infty))$.  Then arguing as in the first
  case using sequences $a_i \to -\infty$ and $b_i \to a$ ($b_i < a$)
  we can find the desired inteveral. 
\end{proof}

In the next proposition, we find a candidate vanishing path
in a train-track $G$ that folds over $a$ several times. The technicalities in its proof 
arise from the fact that, as the hypothesis concerns the {\it
  unstable} lamination, we must first find a large power of $a$
covered by a leaf of the lamination in a train-track map for the {\it
  inverse} $\phi^{-1}$ of $\phi$. Care is then required in mapping
this leaf over to a train-track for $\phi$, as there might be
excessive cancellation. We resolve this difficulty by applying powers
of $\phi^{-1}$, so that the length of $a$ dominates any such
cancellation.

\begin{proposition}\label{prop:unstable}
  Suppose that $\phi$ is fully irreducible with unstable tree $T_-$ and
 lamination $\Lambda_-$, with $\tau_a(T_-,\Lambda_-) \geq n + 2$ for some
  $a \in F_k$.  Then there exists a train-track graph $G \in CV_k$ for
  $\phi$ and a leaf of the unstable lamination $\ell \co \BR \to
  \widetilde{G}$ such that for all $L > 0$, there is a finite interval
  $I \subset \BR$ such that $[\ell(I)]\dagger_L$ $n$--covers $a$.
\end{proposition}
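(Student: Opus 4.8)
The plan is to convert the hypothesis on the unstable data into a statement about a train-track for $\phi^{-1}$, then transport it back. Recall that $T_- = T_-(\phi) = T_+(\phi^{-1})$ and $\Lambda_- = \Lambda_-(\phi) = \Lambda_+(\phi^{-1})$, so the hypothesis says $\tau_a(T_+(\phi^{-1}), \Lambda_+(\phi^{-1})) \geq n+2$. Fix a train-track representative $h \co H \to H$ for $\phi^{-1}$, with induced map $f_h \co \widetilde H \to T_+(\phi^{-1})$ from Theorem \ref{th:Np-or-vp}, and let $\ell_H \co \BR \to \widetilde H$ be a leaf of $\Lambda_+(\phi^{-1})$ (which is the stable lamination of $\phi^{-1}$, realized in $H$ by the construction recalled in Section \ref{ssc:lamination-Q}). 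Since $\tau_a(T_-,\Lambda_-) \geq n+2 > n$ and the twist is a supremum over leaves of $L_{T_-}(\alpha_{T_-} \cap T_-^{\la a \ra})/\ell_{T_-}(a)$, and the leaf $\ell_H$ is quasi-periodic so its $T_-$-realization overlaps every other leaf's $T_-$-realization arbitrarily, I expect to be able to choose the leaf so that $\ell_H$ itself, when pushed to $T_-$ via $\CQ$, overlaps the axis of $a$ with length $> (n+1)\ell_{T_-}(a)$. The point of having $n+2$ rather than $n$ is to leave room for the extremal-segment loss $\dagger_L$ and for bounded-backtracking and cancellation errors incurred below.

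Next I would produce, inside $H$ (the train-track for $\phi^{-1}$), a finite reduced path $\mu \subset H$ that genuinely $m$-covers $a$ for $m$ as large as we like, by taking a long enough finite subsegment of the leaf $\ell_H$: because the $T_-$-realization of the leaf runs along the axis of $a$ for length slightly over $(n+1)\ell_{T_-}(a)$, a sufficiently long combinatorial subpath of $\ell_H$ projects (under $f_h$) onto that overlap, and since $f_h$ is length-nonincreasing on legal portions and the leaf is legal, the subpath of $\ell_H$ in $\widetilde H$ already crosses a lift of the loop for $a$ at least $n+1$ times — shrink to get exactly, say, an $(n+1)$-cover that is contained in a longer leaf subsegment with margin. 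Now I need to carry this over to a train-track $G$ for $\phi$ itself. Take any train-track representative $g \co G \to G$ of $\phi$ and a change-of-marking (homotopy equivalence) $\psi \co H \to G$; this does not respect the train-track structures, so $[\psi(\mu)]$ may suffer cancellation. The resolution, as the paragraph before the proposition signals, is to replace $\mu$ by $[h^k(\mu')]$ for a suitable longer leaf subsegment $\mu'$ and large $k$: applying $h^k$ (which is $\phi^{-k}$) expands the legal portions of $\mu'$ by $\lambda_{\phi^{-1}}^k$ while the bounded-cancellation constant $BCC(\psi)$ stays fixed, so for $k$ large the image $[\psi(h^k(\mu'))]$ in $G$ still $n$-covers (a $\phi^{-k}$-translate of) $a$ — note $a$ is replaced by $\phi^{-k}(a)$ along the way, but we can rename and it is again some element of $F_k$ with positive $T_+(\phi)$-length since $\phi$, being fully irreducible, is atoroidal and $\ell_{T_-}(a) \neq 0$ forces $\ell_{T_+}(\phi^{-k}a) \neq 0$ — actually we should track which element we end with; the cleanest route is to note $\ell_{T_-}(a) > 0$ and apply $\phi^{-k}$ throughout, landing on $\phi^{-k}(a)$, and just call that our element, since the statement only asserts existence of some $G$, $\ell$, and covers the original $a$ — here care is needed, so instead I would arrange to work with $a$ itself by choosing the leaf and subsegments equivariantly and noting that $\la a\ra$-translates of the covering subpath remain covering.

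Having produced $G$, $g \co G \to G$, and a finite reduced path $\nu \subset G$ that $(n+1)$-covers $a$ (with $(n+1)$ rather than $n$ to absorb $\dagger_L$), I would identify $\nu$ as a subsegment of a leaf of the unstable lamination $\Lambda_-(\phi)$ in $\widetilde G$: indeed the construction transported a leaf subsegment of $\Lambda_+(\phi^{-1}) = \Lambda_-(\phi)$ from $H$ to $G$, and after applying $h^k = \phi^{-k}$ and the marking it still traces part of a leaf of $\Lambda_-(\phi)$ (laminations are preserved under change of marking up to the identification of boundaries). Pick the leaf $\ell \co \BR \to \widetilde G$ containing $\nu$ with enough margin on both sides, say $\nu = [\ell(I_0)]$ sits inside $[\ell(J_0)]$ with at least length $L$ of extra leaf on each end; then $[\ell(J_0)]\dagger_L \supseteq \nu$ still $(n+1)$-covers, hence $n$-covers, $a$. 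Since quasi-periodicity lets us recover such a configuration at arbitrarily large scale — for any $L > 0$ choose $J = J(L)$ long enough that some $F_k$-translate of the configuration $(I_0 \subset J_0)$ sits inside $\ell(J)$ — we get, for every $L$, a finite interval $I \subset \BR$ with $[\ell(I)]\dagger_L$ $n$-covering $a$, which is exactly the conclusion. The main obstacle I anticipate is the bookkeeping in the transfer step: controlling the cancellation of $[\psi(\cdot)]$ so that enough copies of the $a$-loop survive, and keeping track of exactly which conjugacy class ($a$ versus $\phi^{-k}(a)$) is being covered — this is where the slack "$+2$" in the hypothesis gets consumed, one unit for the $\dagger_L$ truncation and one unit as a safety buffer for bounded cancellation and the passage from the $T_-$-overlap length to an honest combinatorial cover count.
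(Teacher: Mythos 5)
There is a genuine gap at the very first step of your transfer: you never legitimately convert the hypothesis $\tau_a(T_-,\Lambda_-)\geq n+2$, which is a statement about overlaps measured in the limiting $\BR$--tree $T_-$, into an honest combinatorial overlap of a leaf with the axis of $a$ inside a simplicial train-track graph. Your inference --- ``the $T_-$--realization of the leaf runs along the axis of $a$, and $f_h$ is length-nonincreasing on legal portions, so the subpath of $\ell_H$ in $\widetilde H$ already crosses a lift of the loop for $a$ at least $n+1$ times'' --- does not follow. The map $\widetilde H\to T_-$ collapses the whole tree onto an $\BR$--tree with dense orbits; the fact that $\CQ(\ell_H)$ and the axis of $a$ have overlapping images in $T_-$ says nothing about the leaf meeting $\widetilde H^{\la a\ra}$ in $\widetilde H$ (indeed the loop for $a$ need not even be legal in $H$, and the leaf could be disjoint from its axis there). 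The paper closes exactly this gap with Proposition \ref{prop:lowersemicontinuous}: since $\widetilde H\phi^{-M}\to T_-$ and $\Lambda_-$ has no $T_-$--bounded leaves, lower semicontinuity gives $\tau_a(\widetilde H_0,\Lambda_-)\geq n+2$ for $H_0=H\phi^{-M}$ with $M$ large, i.e.\ a leaf segment $\ell(I_0)\subset\widetilde H_0^{\la a\ra}$ of length $(n+2)\ell_{\widetilde H_0}(a)$. This is not cosmetic: that combinatorial overlap is also what forces the loop for $a$ to be \emph{legal} in $H_0$ (it contains a full period inside a leaf), which in turn is what guarantees $\ell_{\widetilde H_0\phi^{-m}}(a)=\lambda^m d$ grows and eventually dominates the cancellation constant of $\kappa\co H\to G$. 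Without legality of $a$, your ``apply $h^k$ to expand the legal portions'' does not control the length of $a$ itself, and the count of surviving copies of the $a$--loop after tightening $[\kappa h^k(\cdot)]$ can degrade.

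The remainder of your outline is essentially the paper's argument (expand by powers of $h$ to beat $BBT/BCC$ of the change of marking, pad the interval on both sides so a central $n$--cover survives $\dagger_L$), and your worry about $a$ versus $\phi^{-k}(a)$ dissolves once the marking bookkeeping is done carefully: in the marked graph $H_0\phi^{-N}$ the conjugacy class of $a$ is represented by $[h^N(\text{loop for }a)]$, so $h^N(\ell(I_0))$ still covers $a$ in the new marking --- no renaming is needed. Your alternative ending via quasi-periodicity (reproducing the covering configuration deep inside arbitrarily long leaf windows) would also work in place of the paper's explicit choice of margins $I_1,I_2$ of length $(L'+C)/\lambda^N$, but the proof cannot get off the ground without the lower-semicontinuity step.
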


\begin{proof}
  Let $H \in CV_k$ be a train-track graph for $\phi\inv$ with train
  track representative $h \co H \to H$ and $G \in CV_k$ a train track
  graph for $\phi$ with train-track map $g\co G \to G$.  Fix Lipschitz
  homotopy equivalences $\kappa \co H \to G$ and $\kappa' \co G \to H$
  representing the change in markings.  Thus the following diagram is
  commutative up to homotopy:
  \[\xymatrix{G \ar[r]^g & G \ar[d]^{\kappa'} \\
    H \ar[u]^\kappa & H \ar[l]^h}\] Notice that $\kappa$ lifts to $\widetilde{\kappa}\co \widetilde{H}
  \to \widetilde{G}$, with bounded backtracking. In particular, we can pick a constant $C$ such
  that if a path $\gamma \subset H$ has length at least $C$, then
  $\kappa(\gamma)$ is not homotopically trivial relative to its
  endpoints.

  As $\tau_a(T_-,\Lambda_-) \geq n + 2$, Proposition
  \ref{prop:lowersemicontinuous} implies that
  $\tau_a(\widetilde{H}_0,\Lambda_-) \geq n + 2$, where $H_0 =
  H\phi^{-M}$ for some large $M$. Define $G_0 = G\phi^{-M}$.  Now
  there is a leaf $\ell \co \BR \to \widetilde{H}_0$ of the unstable
  lamination and an interval $I_0 \subset \BR$ such that $\ell(I_0)
  \subset \widetilde{H}_0^{\la a \ra}$ and
  $L_{\widetilde{H}_0}(\ell(I_0)) \geq (n +
  2)\ell_{\widetilde{H}_0}(a)$.  Notice that this implies that the
  loop representing the conjugacy class of $a$ in $H_0$ is legal with
  respect to $h$.  Then if we let $\lambda$ be the expansion factor
  for $\phi\inv$, and let $d = \ell_{\widetilde{H}_0}(a)$, we have
  $\ell_{\widetilde{H}_0\phi^{-m}}(a) = \lambda^md$.  Let $N$ be such
  that $\lambda^Nd \geq C$.  Define $G_1 = G_0\phi^{-N}$ and $H_1 =
  H_0\phi^{-N}$.

  Fix $L>0$ and let $L'$ be such that if we have paths $\gamma \subset
  \gamma'$ in $H_0$ and $\gamma$ has length at least $L'$ then the
  path $\kappa(\gamma)$ intersects $[\kappa(\gamma')]$ in a path of length
  at least $L$ (necessarily a subpath of $[\kappa(\gamma')]$, but not
  necessarily a subpath of $[\kappa(\gamma)]$).  Indeed, such an $L'$
  exists as $\kappa$ is a quasi-isometry and the graph $G_0$ has valence
  bounded by $2k$.  Finally, extend $I_0$ to $I = I_1 \cup I_0 \cup
  I_2 \subset \BR$ by including intervals $I_1$ and $I_2$ of length at
  least $(L' + C)/\lambda^N$.

  Notice that the lengths of $h^N(\ell(I_1))$ and $h^N(\ell(I_2))$ in
  $H_1$ are at least $L' + C$.  Moreover, the initial subsegment
  $\iota \subset h^N(\ell(I_1))$ of length $L'$ maps by $\kappa$ to a
  segment in $G_1$ that intersects $[\kappa h^N\ell(I)]$ in a path of
  length at least $L$, so that $\kappa(\iota)$ does not cancel with
  $\kappa h^N\ell(I_0)$, as $\iota$ and $h^N\ell(I_0)$ are separated in
  $h^N\ell(I)$ by a subsegment of length at least $C$.  Similarly for
  the terminal subsegment of $h^N(I_2)$.

  Recall $\ell(I_0)$ $(n+2)$--covers $a$ in $H_0$.  This clearly
  implies that $h^N\ell(I_0)$ $(n+2)$--covers $a$ in $H_1$ as all of
  the paths are legal with respect to $h$. Thus $h^N\ell(I_0)$
  contains a subpath whose image in $H_1$ represents the conjugacy
  class of $a^{n+2}$.  As
  $\ell_{\widetilde{H}_1}(a) \geq C$, a subsegment of length $n
  \ell_{\widetilde{G}_1}(a)$ in $[\kappa h^N\ell(I_0)]$ survives after
  tightening $[\kappa h^N\ell(I_1)] \cdot [\kappa h^N\ell(I_0)] \cdot
  [\kappa h^N\ell(I_2)]$ to get $[\kappa h^N\ell(I)]$.  Now tighten
  $\kappa\ell$ to get a leaf of the unstable lamination $\ell_1\co \BR
  \to \widetilde{G}_1$.  Thus
  $L_{\widetilde{G}_1}([\ell_1(I)]\dagger_L \cap \widetilde{G}_1^{\la
    a \ra}) \geq n\ell_{\widetilde{G}_1}(a)$ and hence
  $[\ell_1(I)]\dagger_L$ $n$--covers $a$.
\end{proof}

Proposition \ref{prop:ncover-vp} will now follow relatively quickly
once we observe the following consequence of Proposition
\ref{prop:unstable}.

\begin{corollary}\label{co:extend-point}
  Suppose that $\phi$ is fully irreducible with unstable tree $T_-$
  and lamination $\Lambda_-$, such that $\tau_a(T_-,\Lambda_-) \geq n
  + 2$ for some $a \in F_k$.  Then there exists a train-track $G \in
  CV_k$ for $\phi$, with train-track map $g\co G \to G$, and a path
  $\gamma = [x,y] \subset \widetilde{G}$ such that:
  \begin{enumerate}

  \item $\gamma$ $n$--covers $a$; and

  \item $f_g(x) = f_g(y)$ where $f_g \co\widetilde{G} \to T_+$ is the
    induced map (see Theorem \ref{th:Np-or-vp}) from $\widetilde{G}$ to $T_+$, the stable tree for
    $\phi$.
  \end{enumerate}
\end{corollary}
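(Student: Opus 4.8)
The plan is to combine Proposition~\ref{prop:unstable} with Lemma~\ref{lm:extend-segment} and the bounded-backtracking survival lemma (Lemma~\ref{lm:surviving-subsegment}). By Proposition~\ref{prop:unstable}, since $\tau_a(T_-,\Lambda_-) \geq n+2$, there is a train-track $G \in CV_k$ for $\phi$, with train-track map $g\co G \to G$, and a leaf $\ell\co \BR \to \widetilde{G}$ of the unstable lamination such that for every $L>0$ there is a finite interval $I \subset \BR$ with $[\ell(I)]\dagger_L$ $n$--covering $a$. Let $f_g\co \widetilde{G} \to T_+$ be the $F_k$--equivariant map from Theorem~\ref{th:Np-or-vp}, and let $BBT(f_g)$ be its bounded backtracking constant (this is finite since $\widetilde{G} \in cv_k$ and $T_+ \in \overline{cv}_k$, as noted in Section~\ref{ssc:bbt}).

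First I would fix $L > BBT(f_g)$ and apply Proposition~\ref{prop:unstable} to obtain a finite interval $I \subset \BR$ such that $[\ell(I)]\dagger_L$ $n$--covers $a$. Next, by Lemma~\ref{lm:extend-segment} applied to this $I$, there is an interval $I' = [x_0,y_0] \subset \BR$ with $I \subseteq I'$ and $f_g(\ell(x_0)) = f_g(\ell(y_0))$. Set $x = \ell(x_0)$ and $y = \ell(y_0)$; this immediately gives condition (2). For condition (1), the point is that the $n$--covering subpath of $[\ell(I)]\dagger_L$ must survive inside $[f_g(\ell(I'))]$ — wait, more precisely inside $[\ell(I')]$ itself. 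Here I should be slightly careful about whether we want the subpath in $\widetilde{G}$ or downstairs: the $n$--covering happens in $\widetilde{G}$, and $\ell$ is already an immersion, so $[\ell(I)] = \ell(I)$ and $[\ell(I')] = \ell(I')$ as subsets of $\widetilde{G}$; hence since $I \subseteq I'$ the segment $\ell(I)$, and in particular its subsegment of $\widetilde{G}^{\la a\ra}$ of length $\geq n\ell_{\widetilde{G}}(a)$, is automatically contained in $\gamma = [x,y] = \ell(I')$. So $\gamma$ $n$--covers $a$, giving (1).

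Actually the deletion of extremal length-$L$ pieces and Lemma~\ref{lm:surviving-subsegment} are not needed to pass from $I$ to $I'$ in $\widetilde{G}$ — that containment is trivial since $\ell$ is an embedding — but I should double-check the logical structure: the reason Proposition~\ref{prop:unstable} phrases things with $\dagger_L$ is precisely so that, after one invokes Lemma~\ref{lm:extend-segment} to get the interval $I'$ with matching $f_g$-endpoints, one can conclude (if needed) that whatever survived in $[\ell(I)]\dagger_L$ also survives in the path $[\ell(I')]$; but since everything here already lives in the tree $\widetilde{G}$ where $\ell$ is injective, the $\dagger_L$ buffer is only there to guarantee that the $n$-covering subsegment lies well inside $\ell(I)$ so that extending to $I' \supseteq I$ cannot destroy it — which it cannot, as $\ell(I) \subseteq \ell(I')$. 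So in the write-up I would take $L = 1$ (any positive constant works), apply Proposition~\ref{prop:unstable} to get $I$ with $[\ell(I)]\dagger_1$ $n$--covering $a$, then Lemma~\ref{lm:extend-segment} to get $I' \supseteq I$ with $f_g$ agreeing on the endpoints, and set $\gamma = \ell(I')$.

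The only genuine subtlety — and the step I'd watch most carefully — is making sure the leaf $\ell$ produced by Proposition~\ref{prop:unstable} is literally a leaf of $\Lambda_-$ in the \emph{same} train-track $G$ for which $f_g\co \widetilde G \to T_+$ and Theorem~\ref{th:Np-or-vp}/Lemma~\ref{lm:extend-segment} are being applied; Proposition~\ref{prop:unstable} is stated so that it outputs exactly such a $G$ and such an $\ell$, and Lemma~\ref{lm:extend-segment} takes as input precisely a train-track representative $g\co G\to G$ of $\phi$, its induced map $f_g$, and a leaf $\ell$ of $\Lambda_-$ in $\widetilde G$ — so the hypotheses line up. Hence the corollary follows essentially immediately:
\begin{proof}
  Fix $L = 1$. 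By Proposition~\ref{prop:unstable} there is a train-track $G \in CV_k$ for $\phi$, with train-track map $g\co G \to G$, and a leaf $\ell\co \BR \to \widetilde{G}$ of the unstable lamination $\Lambda_-$ together with a finite interval $I \subset \BR$ such that $[\ell(I)]\dagger_1$ $n$--covers $a$. Let $f_g\co \widetilde{G} \to T_+$ be the $F_k$--equivariant map from Theorem~\ref{th:Np-or-vp}. By Lemma~\ref{lm:extend-segment} applied to $I$, there is an interval $I' = [x_0,y_0] \subset \BR$ with $I \subseteq I'$ and $f_g(\ell(x_0)) = f_g(\ell(y_0))$. Put $x = \ell(x_0)$, $y = \ell(y_0)$, and $\gamma = [x,y] \subset \widetilde{G}$. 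Since $\ell$ is an isometric immersion, $[x,y] = \ell(I')$, and since $I \subseteq I'$ we have $\ell(I) \subseteq \gamma$; as $[\ell(I)]\dagger_1 = \ell(I)\dagger_1$ $n$--covers $a$, so does $\gamma$, proving (1). Finally $f_g(x) = f_g(\ell(x_0)) = f_g(\ell(y_0)) = f_g(y)$, proving (2).
\end{proof}
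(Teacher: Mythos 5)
Your proof is correct and follows the paper's own argument: Proposition~\ref{prop:unstable} produces $G$, $\ell$, and $I$; Lemma~\ref{lm:extend-segment} extends $I$ to $I'$ with $f_g$-identified endpoints; and $\gamma = \ell(I')$ works. The only (harmless) divergence is that the paper takes $L = BBT(f_g)+1$ and cites Lemma~\ref{lm:surviving-subsegment} to see that the $n$--covering subsegment persists in $[\ell(I')]$, whereas you correctly observe that since $\ell$ is an isometric embedding of $\BR$ into the tree $\widetilde{G}$, the containment $\ell(I)\subseteq\ell(I')$ is automatic and any $L>0$ suffices.
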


\begin{proof}
  Let $G$ be the train-track graph given by Proposition \ref{prop:unstable},
 and let $f_g \co \widetilde{G} \to T_+$ be the
  induced map.  Let $\ell \co \BR \to \widetilde{G}$ be the leaf of
  the unstable lamination and $I \subset \BR$ the interval produced by
  Proposition \ref{prop:unstable}, for $L = BBT(f_g) + 1$.  For this $I$, let $I'$ be the
  interval given by Lemma \ref{lm:extend-segment}.

  We claim that $\gamma = [\ell(I')]$ satisfies the conclusion of the
  corollary.  Indeed by Lemma \ref{lm:surviving-subsegment}, as
  $[\ell(I)]\dagger_L$ contains a path in the axis of $a$ of length
  $n\ell_{\widetilde{G}}(a)$, so does $\gamma = [\ell(I')]$.  By
  construction, $f_g$ identifies the endpoints of $\gamma$.
\end{proof}

Proposition \ref{prop:find-vp} applied to the train-track $G$ and the path $\gamma$ of Corollary \ref{co:extend-point} give the following:

\begin{proposition}\label{prop:ncover-vp}
  Suppose $\phi$ is a fully irreducible element with unstable tree
  $T_-$ and lamination $\Lambda_-$ such that $\tau_a(T_-,\Lambda_-)
  \geq n + 2$ for some $a \in F_k$. Then there exists a train-track $G
  \in CV_k$ for $\phi$, with train-track map $g\co G \to G$, and a
  vanishing path $\gamma \subset G$ that $n$--covers $a$.\qed
\end{proposition}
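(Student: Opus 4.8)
The plan is to derive Proposition~\ref{prop:ncover-vp} directly from Corollary~\ref{co:extend-point} together with Proposition~\ref{prop:find-vp}, which is precisely the combination advertised in the sentence preceding the statement. First I would invoke Corollary~\ref{co:extend-point} to obtain a train-track $G \in CV_k$ for $\phi$, with train-track map $g\co G \to G$, a lift $\gamma = [x,y] \subset \widetilde{G}$ of a reduced path in $G$ that $n$--covers $a$, and the induced map $f_g\co \widetilde{G} \to T_+$ (from Theorem~\ref{th:Np-or-vp}) satisfying $f_g(x) = f_g(y)$. Since $\tau_a(T_-,\Lambda_-) \geq n+2 > 0$, the hypothesis $\ell_{T_+}(a) > 0$ of Proposition~\ref{prop:find-vp} is automatically in force: by Remark~\ref{rm:T-bounded}(1) the pair $T_+(\phi),\Lambda_+(\phi)$ has no $T$--bounded geodesics, and more to the point $\phi$ fully irreducible together with nonzero relative twist against $T_-$ forces $a$ to have nontrivial translation length in $T_+$ (otherwise $a$ would be $\phi$--periodic, contradicting that the twist number involving the unstable data is finite and at least $n+2 \geq 2$). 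I would spell this observation out as a short preliminary remark.

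Next I would feed the data $(G, g, a, \gamma, f_g)$ into Proposition~\ref{prop:find-vp}. That proposition takes exactly a reduced path $\alpha \subset G$ whose lift $[x,y] \subset \widetilde{G}$ $n$--covers $a$ with $n \geq 2$ and whose endpoints are identified by $f_g$, and concludes that $\alpha$ contains a vanishing path that $n$--covers $a$. One should note that $n \geq 2$ is guaranteed here since $\tau_a(T_-,\Lambda_-) \geq n+2$ only has content when $n \geq 1$, but if $n = 1$ the statement is still nontrivial; I would check whether Proposition~\ref{prop:find-vp} genuinely needs $n \geq 2$ or whether, in the $n = 1$ case, one can simply run the argument with $n = 2$ (the hypothesis $\tau_a \geq n+2 = 3 \geq 4$ — actually $3$, but the ambient Theorem~\ref{th:alg-twist} wants $n+4$, so there is slack) and then observe that a path that $2$--covers $a$ also $1$--covers $a$. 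In any case, applying Proposition~\ref{prop:find-vp} produces a subpath of $\gamma$ (equivalently, a path in $G$) that is a vanishing path of $g$ and $n$--covers $a$, which is exactly the conclusion of Proposition~\ref{prop:ncover-vp}.

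The main (and really only) obstacle is bookkeeping: making sure that the train-track $G$ furnished by Corollary~\ref{co:extend-point} is the \emph{same} $G$ on which Proposition~\ref{prop:find-vp} is applied, and that the map $f_g$ in the two statements is literally the one from Theorem~\ref{th:Np-or-vp}. Since Corollary~\ref{co:extend-point} is stated so as to hand over both $G$ and the compatible $f_g$, this matching is immediate, so the proof is essentially a two-line citation. Because the statement already carries a \texttt{\char92 qed} at the end of the displayed theorem, I would keep the argument extremely short — essentially: ``Apply Corollary~\ref{co:extend-point} to obtain $G$, $g$, $f_g$, and $\gamma = [x,y]$ with $\gamma$ $n$--covering $a$ and $f_g(x) = f_g(y)$; since $\ell_{T_+}(a) > 0$, Proposition~\ref{prop:find-vp} applied to $\gamma$ yields a vanishing path $\gamma' \subseteq \gamma \subset G$ that $n$--covers $a$.'' The only thing worth a sentence of justification is why $\ell_{T_+}(a) \neq 0$, and that follows from $\tau_a(T_-,\Lambda_-) \geq n+2$ being finite and positive together with the fact that a fully irreducible $\phi$ has no periodic conjugacy classes, so $a$ (being relevant to a genuine overlap with a leaf) cannot have vanishing length in the stable tree.
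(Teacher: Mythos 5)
Your proof is correct and is exactly the paper's own argument: the paper disposes of this proposition with the single sentence ``Proposition \ref{prop:find-vp} applied to the train-track $G$ and the path $\gamma$ of Corollary \ref{co:extend-point} give the following,'' which is precisely your two-line citation. Your additional care about why $\ell_{T_+}(a) > 0$ (it follows since $\tau_a(T_-,\Lambda_-) > 0$ forces $\ell_{T_-}(a) \neq 0$ by Definition \ref{def:woMeasure}, and the elliptic elements of $T_+$ and $T_-$ coincide) and about the $n \geq 2$ hypothesis of Proposition \ref{prop:find-vp} goes beyond what the paper itself records.
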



\section{Finding small cycles}\label{sc:smallcycle}

With our definition of relative twist, we can prove the analogue of a
special case of Rafi's characterization of short curves along
geodesics in Teichm\"uller space \cite{ar:Rafi05}. 

\begin{proposition}\label{prop:vpimpliessmallcycle}
  Suppose $G,G' \in CV_k$, $f\co G \to G'$ is a change of marking map
  with minimal slope, $d = d_L(G,G')$ and $a \in F_k$.  If there is a
  vanishing path $\gamma \subset G$ that $(n+2)$--covers $a$, then
  there is a geodesic $\alpha\co [0,d] \to CV_k$ such that $\alpha(0)
  = G$, $\alpha(d) = G'$ and for some $t \in [0,d]$, we have
  $\ell_{\alpha(t)}(a) \leq 1/n$.
\end{proposition}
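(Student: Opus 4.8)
The plan is to construct the geodesic $\alpha$ as a concatenation of two folding paths, the junction being a graph in which the loop representing $a$ is short. First I would dispose of the trivial case: if $\ell_G(a)\le 1/n$, then any geodesic from $G$ to $G'$ (for instance a folding line built as in Section~\ref{ssc:axes}) works with $t=0$. So assume $\ell_G(a)>1/n$, and write $\sigma=e^{d}=\Lip(f)=\sigma(G,G')$.

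The main step is to produce a graph $G_0\in CV_k$ with $\ell_{G_0}(a)\le 1/n$ that lies on a geodesic from $G$ to $G'$, i.e.\ with $d_L(G,G_0)+d_L(G_0,G')=d$. Granting this, let $\alpha$ be the concatenation of a folding-line geodesic from $G$ to $G_0$ with the standard folding-line geodesic from $G_0$ to $G'$ of Section~\ref{ssc:axes}; since the two lengths sum to $d=d_L(G,G')$ and each piece is a geodesic, $\alpha\co[0,d]\to CV_k$ is a geodesic from $G$ to $G'$, and at $t=d_L(G,G_0)$ we have $\ell_{\alpha(t)}(a)\le 1/n$, as desired.

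To build $G_0$, I would exploit the fact that the optimal map $f$, having a vanishing path $\gamma$ that wraps $n+2$ times around $a$, factors optimally through a graph in which $a$ is short. Lift $\gamma$ to a tight segment $\tilde\gamma=[\tilde x,\tilde y]\subset\widetilde G$ containing a fundamental domain $[z,a^{n+2}z]$ of $\langle a^{n+2}\rangle$ on the axis $\widetilde G^{\langle a\rangle}$; that $\gamma$ is a vanishing path of $f$ means $\tilde f(\tilde x)=\tilde f(\tilde y)$. Choose a folding sequence realizing $f$ (as in Section~\ref{ssc:axes}) that performs the folds along $F_k\cdot\tilde\gamma$ first. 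As this proceeds the endpoints of the image of $\tilde\gamma$ are drawn together, and because $\tilde\gamma$ runs monotonically along the $a$-axis for length $(n+2)\ell_G(a)$, this cannot happen without the translation length of $a$ decreasing; tracked against the $n+2$ fundamental domains $[a^iz,a^{i+1}z]$, two of them are consumed by the non-axis pieces $\beta,\beta'$ of $\gamma$ and by the bounded-backtracking slack of the factored map, while the remaining $n$ force $\ell(a)$ strictly below $1/n$ before $\tilde\gamma$ is fully folded. Let $G_0$ be the first graph along the sequence with $\ell_{G_0}(a)=1/n$, and write $f=\bar f\circ\pi$ with $\pi\co G\to G_0$ and $\bar f\co G_0\to G'$. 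Since $\pi$ is a composition of folds from a sequence realizing the already-optimal $f$, the map $\bar f$ still has minimal slope, so White's formula (Proposition~\ref{prop:loops}) gives $d_L(G,G_0)+d_L(G_0,G')=d$.

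The main obstacle is the preceding paragraph, where the genuine content lies: showing that folding the vanishing path drives $\ell(a)$ continuously down through $1/n$ — not merely that $\ell_{G'}(a)$ is small — with bookkeeping tight enough that the ``$+2$'' in ``$(n+2)$--covers'' is precisely what is needed to reach ``$\le 1/n$'', and at the same time verifying that no Lipschitz budget is wasted, so that $G_0$ really lies on a geodesic to $G'$. The tools I expect to use are bounded backtracking applied to the factored map $\widetilde G\xrightarrow{\ \pi\ }\widetilde{G_0}\xrightarrow{\ \bar f\ }\widetilde{G'}$ (in the spirit of Lemma~\ref{lm:surviving-subsegment}), the explicit structure of the folding lines of Section~\ref{ssc:axes}, and White's Proposition~\ref{prop:loops}.
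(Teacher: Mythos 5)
Your overall architecture --- rescale, fold toward $G'$ along a Francaviglia--Martino folding geodesic, and exhibit an intermediate graph where $a$ is short --- matches the paper's, and the reduction in your second paragraph is fine. But the step you yourself flag as ``the main obstacle'' is exactly the content of the proposition, and your proposed mechanism for it does not close. Folding along $F_k\cdot\tilde\gamma$ until the endpoints of $\tilde\gamma$ are identified does not by itself control $\ell(a)$ at intermediate graphs: what you would need is a simultaneous lower bound on how many fundamental domains of $a$ the \emph{tightened image} of $\gamma$ still covers at the moment that image has become short, and both quantities (the length of the image and the number of fundamental domains it covers) decrease together as the axis folds onto itself --- that is precisely how a path wrapping around $a^{n+2}$ manages to vanish. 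Your bookkeeping (``two of them are consumed by the non-axis pieces\dots the remaining $n$ force $\ell(a)$ strictly below $1/n$'') is an assertion, not an argument, and the claim that $G_0$ can be taken to be ``the first graph with $\ell_{G_0}(a)=1/n$'' presupposes the very fact to be proved.

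The paper certifies the short graph by a different mechanism, which is worth comparing. Instead of folding the vanishing path, one folds the immersed loop $H_a=\widetilde G_0^{\langle a\rangle}/\langle a\rangle$ until it becomes an immersion $H_a'\to G'$, and takes $G_1$ to be the graph on the folding geodesic where this first happens. Two things then hold at $G_1$: (i) a local analysis of the folds on $H_a$ shows that the image $\gamma_1$ of the vanishing path still $n$--covers $a$ (each tail of $\gamma$ can be absorbed into at most one copy of $H_a$, which is exactly where the ``$+2$'' goes); and (ii) because $H_a'$ is immersed in every graph from $G_1$ to $G'$, the $a$--axis is stretched with no cancellation, so $\ell_{G'}(a^n)=\Lip(f_1)\,\ell_{G_1}(a^n)$ exactly. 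Combining (i), which gives $\ell_{G'}(a^n)\le BBT(f_1)$, with $BBT(f_1)\le\Lip(f_1)\vol(G_1/F_k)=\Lip(f_1)$ yields $\ell_{G_1}(a^n)\le 1$, i.e.\ $\ell_{G_1}(a)\le 1/n$. It is point (ii) --- the exact, not merely approximate, relation between $\ell_{G_1}(a)$ and $\ell_{G'}(a)$ at the immersion point --- that replaces the continuity/intermediate-value argument you were hoping for, and nothing in your sketch supplies a substitute for it.
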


\begin{proof} 
  Shrinking the edges of $G$ such that each edge is stretched by
  exactly $e^d$ results in a marked graph $G_0$ and provides a
  geodesic $\alpha\co [0,d_0] \to CV_k$ such that $\alpha(0) = G$,
  $\alpha(d_0) = G_0$ and $d = d_L(G_0,G') + d_0$. Denote the induced
  map $G_0 \to G'$ by $f$.

  Consider the graph $H_a = \widetilde{G}_0^{\la a \ra}/\la a \ra$ and
  the map $h_{G'}\co H_a \to G'$ which is the composition of the
  immersion $H_a \to G_0$ with the map $f\co G_0 \to G'$. By
  appropriately subdividing and folding the graph $H_a$, after pruning
  we obtain a graph immersion $H'_a \to G'$. Now choose a folding path
  based at $G_0$ whose folds correspond to the folds performed on
  $H_a$. The end of the folding path is a graph $G_1$ in which $H'_a$
  is immersed.

  Let $d_1 = d_L(G_0,G_1)$.  As $f$ has minimal slope, the above path
  extends $\alpha$ to a geodesic $\alpha\co [0,d_1] \to CV_k$ such
  that $\alpha(0) = G$ and $\alpha(d_1) = G_1$ \cite[Theorem
  5.5]{un:FM2}. Further $d_L(G_1,G') = d - d_1$.
  Denote the induced map $G_1 \to G'$ by $f_1$.

  The geodesic segment $\alpha$ can further be extended to a geodesic
  by folding $G_1$.  The image of $\gamma$ in $G_1$ (which we denote
  by $\gamma_1$) is a vanishing path for the map $f_1$.

  \begin{claim*}
    The path $\gamma_1 \subset G_1$ $n$--covers $a$.
  \end{claim*}
  \begin{proof}[Proof of Claim]
    Consider the graph $\widetilde{G}_0/\la a\ra$.  This graph
    consists of a collection of trees attached to $H_a$.  We consider
    the graph $H_a$ as oriented counterclockwise and decompose $H_a$
    into subsegments
    $\delta_1\epsilon_1\cdots\delta_\ell\epsilon_\ell$ where the
    $\delta_i$ are the maximal subsegments that remain upon folding
    $H_a \to H'_a$ and pruning.  The images of the $\epsilon_i$ are
    what get pruned.  There is a lift of $\gamma$ to
    $\widetilde{G}_0/\la a \ra$ that decomposes into subpaths $\gamma
    = \beta_0 \alpha \beta_1$ where $\beta_0$ and $\beta_1$ are
    embedded and $\alpha$ is the immersed path that covers $H_a$ $n+2$
    times.

    When folding the segments $\epsilon_i$, the initial part of
    $\beta_1$ may (by equivariance) become identified with some
    portion of $H_a$.  We are interested in bounding how much is
    identified with the terminal portion of $\alpha$ as this could
    reduce the amount of $\gamma_1$ that covers $a$.  We will show
    that the portion of $\alpha$ identified is a segment of $H_a$.
    In other words, we can reduce this amount by at most 1.

    Without loss of generality we assume that $\beta_1$ only
    intersects $H_a$ in a single vertex.  Suppose this vertex is in
    $\delta_i$ and consider performing the folds in $\epsilon_i$.
    After folding $\epsilon_i$, the subsegment of the terminal part of
    $\alpha$ identified with the initial part of $\beta_1$ either
    heads counterclockwise from $v_0$ (which we are not concerned with
    as this adds to the amount by which $\gamma_1$ covers $a$) or it
    is contained in the union of $\delta_i$ and $\epsilon_i$.  Indeed,
    we can just check locally that when folding two edges $e_1$ and
    $e_2$ in $\epsilon_i$ together in $H_a$ that $\beta_1$ cannot fold
    past (in the clockwise direction) the image of their terminal
    vertices.  This involves a few cases depending on the relative
    positions of $\beta_1$, $e_1$ and $e_2$; all of which are
    easily verified.

    Similarly, if $\beta_1$ only intersects $H_a$ in a vertex of
    $\epsilon_i$, then we find the the subsegment of the terminal
    portion of $\alpha$ that is identified with $\beta_1$ either heads
    counterclockwise from $v_0$ or it is contained in $\epsilon_i$.

    Thus when performing the folds in $H_a \to H'_a$, the initial
    portion of $\beta_1$ is identified at most one copy of $H_a$.
    Likewise, the same holds for the terminal portion of $\beta_0$.
    Therefore the image path $\gamma_1$ $n$--covers $a$.
  \end{proof}
  As a consequence of the claim, we have $\ell_{G'}(a^n) \leq BBT(f_1)$.
  Since $H'_a$ is immersed in every graph along the folding path between $G_1$
  and $G'$, we have $\ell_{G'}(a^n) = \Lip(f_1)\ell_{G_1}(a^n)$, so
  that
  \[
  \ell_{G_1}(a^n) = \ell_{G'}(a^n)/\Lip(f_1) \leq
  BBT(f_1)/\Lip(f_1) \leq 1
  \]
  and so $\ell_{G_1}(a) \leq \frac{1}{n}$.
\end{proof}

Combining Proposition \ref{prop:vpimpliessmallcycle} with Proposition
\ref{prop:ntwist-vp} we get the first of the main results of this
paper.

\begin{theorem}\label{th:geo-twist}
  Suppose $G,G' \in CV_k$ with $d = d_L(G,G')$ such that $\tau_a(G,G')
  \geq n+2$ for some $a$.  Then there is a geodesic $\alpha\co [0,d] \to
  CV_k$ such that $\alpha(0) = G$ and $\alpha(d) = G'$ and for some $t
  \in [0,d]$, we have $\ell_{\alpha(t)}(a) \leq 1/n$.  In other words,
  $\alpha([0,d]) \cap CV_k^{1/n}(a) \neq \emptyset$.
\end{theorem}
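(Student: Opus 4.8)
The plan is to simply concatenate the two propositions developed in Sections \ref{sc:gtwist} and \ref{sc:smallcycle}. First I would invoke Proposition \ref{prop:loops} to fix a change of marking map $f\co G \to G'$ whose slope $\sigma(f)$ realizes $\sigma(G,G') = e^d$; this is the ``minimal slope'' hypothesis needed later. Since the relative twist satisfies $\tau_a(G,G') \geq n+2$, I would apply Proposition \ref{prop:ntwist-vp} with the integer ``$n$'' there taken to be $n+2$: this produces, for the chosen map $f$, a vanishing path $\gamma \subset G$ that $(n+2)$--covers $a$.

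With this vanishing path in hand, the hypotheses of Proposition \ref{prop:vpimpliessmallcycle} are met verbatim — $G, G' \in CV_k$, the map $f$ has minimal slope, $d = d_L(G,G')$, and $\gamma$ is a vanishing path $(n+2)$--covering $a$. Applying it directly yields a geodesic $\alpha\co [0,d] \to CV_k$ with $\alpha(0) = G$, $\alpha(d) = G'$, and some $t \in [0,d]$ with $\ell_{\alpha(t)}(a) \leq 1/n$. The final sentence, that $\alpha([0,d]) \cap CV_k^{1/n}(a) \neq \emptyset$, is then just a restatement using the notation $CV_k^{1/n}(a)$ for graphs in which the loop representing $a$ has length at most $1/n$.

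I do not anticipate a genuine obstacle here, since both component propositions have already been proved; the only point requiring a word of care is matching the index shift (the ``$n+2$''-covering needed by Proposition \ref{prop:vpimpliessmallcycle} is exactly what the hypothesis $\tau_a(G,G') \geq n+2$ feeds into Proposition \ref{prop:ntwist-vp}), and confirming that the map supplied by the minimal-slope choice is the same one to which Proposition \ref{prop:ntwist-vp} is applied — which is unproblematic because that proposition's conclusion holds for \emph{every} change of marking map.
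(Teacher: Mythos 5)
Your proposal is correct and is exactly the paper's argument: the theorem is obtained by combining Proposition \ref{prop:ntwist-vp} (which, since $\tau_a(G,G') \geq n+2$, yields a vanishing path $(n+2)$--covering $a$ for any change of marking map, in particular the minimal-slope one) with Proposition \ref{prop:vpimpliessmallcycle}. The index bookkeeping and the remark that the vanishing path exists for every change of marking map are handled correctly.
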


Additionally, combining Proposition \ref{prop:vpimpliessmallcycle}
with Proposition \ref{prop:ncover-vp} we get the second of the main
results of this paper.

\begin{theorem}\label{th:alg-twist}
  Suppose $\phi \in \Out F_k$ is fully irreducible, with unstable tree $T_-$ and
  lamination $\Lambda_-$ such that $\tau_a(T_-,\Lambda_-) \geq n+4$
  for some $a \in F_k$.  Then given any train-track $G$, there is an
  axis $\CL_\phi$ for $\phi$ that contains $G$ and a graph $G_0$ such
  that $\ell_{\widetilde{G}_0}(a) \leq 1/n$. In other words, $\CL_\phi
  \cap CV_k^{1/n}(a) \neq \emptyset$.
\end{theorem}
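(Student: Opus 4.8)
The plan is to combine Propositions \ref{prop:ncover-vp} and \ref{prop:vpimpliessmallcycle}, in exact parallel with the derivation of Theorem \ref{th:geo-twist} from Propositions \ref{prop:ntwist-vp} and \ref{prop:vpimpliessmallcycle}, together with an extra step to stay on an axis rather than a mere geodesic segment. Since $\tau_a(T_-,\Lambda_-)\geq n+4=(n+2)+2$, Proposition \ref{prop:ncover-vp}, applied with $n+2$ in the role of its parameter $n$, produces a train-track $G\in CV_k$ for $\phi$, a train-track map $g\co G\to G$, and a vanishing path $\gamma\subset G$ that $(n+2)$--covers $a$. Running the construction behind Propositions \ref{prop:unstable} and \ref{prop:ncover-vp} starting from \emph{any} prescribed train-track representative, one sees that this $G$ can be taken to be $G'\phi^{-M-N}$ for a given train-track $G'$ and suitable $M,N$; in particular $G$ lies on the axis $\CL_\phi$ through $G'$, since that axis is $\phi$--periodic and so contains every $\phi$--power translate of $G'$.

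Being a vanishing path for $g$, the path $\gamma$ satisfies $[g^m(\gamma)]$ trivial for some $m\geq 1$. The next, key, observation is that the iterate $g^m\co G\to G\phi^m$ is again a train-track map, now representing the (still fully irreducible) automorphism $\phi^m$ with expansion factor $\lambda_\phi^m$: the legal structure $\CL\CT$ of $g$ works for $g^m$, and $g^m$ stretches every edge by $\lambda_\phi^m$. Hence $g^m$ is a change of marking map of minimal slope, realizing $d_L(G,G\phi^m)=m\log\lambda_\phi$ (it achieves $\min_{H}d_L(H,H\phi^m)=\log\lambda_\phi^m$), and $\gamma$ is a vanishing path for the single map $g^m$. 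Applying Proposition \ref{prop:vpimpliessmallcycle} to $g^m$ and $\gamma$ (which $(n+2)$--covers $a$) yields a geodesic $\alpha\co[0,m\log\lambda_\phi]\to CV_k$ from $G$ to $G\phi^m$ with $\ell_{\alpha(t_0)}(a)\leq 1/n$ for some $t_0$; put $G_0=\alpha(t_0)$. Because every edge of the train-track $G$ is already stretched by exactly $\lambda_\phi^m=e^{d_L(G,G\phi^m)}$, the initial edge-shrinking stage in the proof of Proposition \ref{prop:vpimpliessmallcycle} is vacuous, so $\alpha$ is a genuine folding path, joining $G$ to its image $G\phi^m=\alpha(0)\phi^m$ under a power of $\phi$.

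To finish I would extend $\alpha$ by the periodic-extension procedure of Section \ref{ssc:axes}: set $\widehat\alpha(t)=\alpha\big(t-km\log\lambda_\phi\big)\phi^{km}$ for $t\in[km\log\lambda_\phi,(k+1)m\log\lambda_\phi]$. The argument of Algom-Kfir \cite[Proposition 3.5]{un:Algom-Kfir} then shows $\widehat\alpha$ is a geodesic parametrized by arclength, i.e.\ an axis containing the prescribed train-track and the graph $G_0\in CV_k^{1/n}(a)$. The step needing the most care, and the main obstacle, is precisely this reconciliation: Proposition \ref{prop:vpimpliessmallcycle} hands us a \emph{particular} folding path --- one that prioritizes the folds coming from the annular cover $H_a$ --- so one must check that its $\phi^m$--periodic extension has no backtracking at the seams $G\phi^{km}$ (so that Algom-Kfir's periodicity argument applies despite the non-standard fold order), and one must decide whether the statement is being proved for $\phi$ or only for $\phi^m$; the $\phi^m$--periodic $\widehat\alpha$ already suffices for the applications in Section \ref{sc:example}, and one recovers an axis of $\phi$ itself either by noting $G$ lies on the $\phi$--axis through the given train-track, or by a more careful bookkeeping of the folds. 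The remaining points --- the index arithmetic $n+4=(n+2)+2$ and $(n+2)$--covering $\Rightarrow\ell\leq 1/n$, and the identity $\ell_{\widetilde G_0}=\ell_{G_0}$ --- are routine.
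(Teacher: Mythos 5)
Your proposal is correct and follows the paper's own route exactly: the paper's entire proof of Theorem \ref{th:alg-twist} is the single sentence ``combining Proposition \ref{prop:vpimpliessmallcycle} with Proposition \ref{prop:ncover-vp} we get the second of the main results,'' and your elaboration --- in particular the key reconciliation that the vanishing path only satisfies $[g^m(\gamma)]$ trivial for some $m\geq 1$, so Proposition \ref{prop:vpimpliessmallcycle} must be applied to the minimal-slope change of marking $g^m\co G\to G\phi^m$ --- supplies precisely the details the paper leaves implicit. The $\phi$-versus-$\phi^m$ periodicity issue you flag at the end is a genuine imprecision, but it is present in, and unaddressed by, the paper's one-line proof as well, so it is not a defect of your argument relative to the source.
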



\section{Example}\label{sc:example}

Here we present an application of Theorem \ref{th:alg-twist} in which
we describe the asymptotic behavior of the translation length in
$CV_k$ of certain elements of $\Out F_k$, given as products $\phi_n =
\delta_1^n \delta_2^{-n}$ of powers of Dehn twists $\delta_1,
\delta_2$.  These types of outer automorphisms were considered in
\cite{ar:CP} and used in \cite{un:CP2} to show that there is no
homological obstruction to full irreducibility.  We briefly recall the
setup here; for more details consult either of the references
\cite{un:CP2, ar:CP}.


\subsection{Dehn twists}\label{ssc:Dehn-Twist}

A \emph{cyclic} tree is a Bass--Serre tree associated to a splitting
of $F_k$ over $\BZ$, either as an amalgamated free product or as an
HNN-extension.  To such a tree is an associated (outer) automorphism
called a \emph{Dehn twist}.  Given $F_k = A *_{\la c \ra} B$ we define
the Dehn twist automorphism $\delta_c$ of $F_k$ by:
\begin{align*}
  \forall a \in A \qquad & \delta_c(a) = a \\
  \forall b \in B \qquad & \delta_c(b) = cbc^{-1}.
\end{align*}
Likewise, given $F_k = A *_\BZ = \la A, t \ | \ t^{-1}ct = c' \ra$
for $c,c' \in A$, we define the Dehn twist $\delta_c$ of $F_k$ by:
\begin{align*}
  \forall a \in A \qquad & \delta_c(a) = a \\
  & \delta_c(t) = ct.
\end{align*}

Two cyclic trees $T_1$ and $T_2$ \emph{fill} if their associated Dehn
twists $\delta_1$, $\delta_2$ do not have any common invariant
conjugacy classes of proper free factors or cyclic subgroups.  As
mentioned above, if $T_1$ and $T_2$ fill, then for large enough $n$,
the element $\delta_1^n\delta_2^{-n}$ is fully irreducible
(and hyperbolic) \cite[Theorem~5.3]{ar:CP}.


\subsection{Currents}\label{ssc:currents}
A \emph{(measured geodesic) current} on $F_k$ is an
$F_k$--invariant and $\sigma$--invariant positive Radon measure on
$\bd^2 F_k$ (refer to Section \ref{ssc:lamination-Q}).  Such measures
where originally considered by Bonahon \cite{col:Bonahon91}, see also
\cite{col:Kapovich06}.  Given a tree $T \in CV_k$, there is an
identification between $\bd F_k$ and $\bd T$ used to interpret a
current as a measure on the set of (bi-infinite) geodesics in
$T$. Given a tight path $\alpha \subset T$, the \emph{two-sided
  cylinder} $Cyl_T(\alpha)$ is the collection of geodesics that
contain $\alpha$; such sets determine a basis for $\bd^2T$, and so in
turn for $\bd^2 F_k$.  When $\alpha$ is a fundamental domain for the
action of $a \in F_k$ on $T^{\la a \ra}$, we will denote
$Cyl_T(\alpha)$ by $Cyl_T(a)$. For a current $\nu \in Curr(F_k)$,
define $\la a, \nu \ra_T = \nu(Cyl_T(a))$.  As $\nu$ is
$F_k$--invariant, this is well-defined. The current is uniquely
defined by the values $\la a , \nu \ra_T$. If $c \in F_k$ is not a
proper power, then we define the {\it counting current} $\eta_c$ of
$c$ by:
\[ \la a, \eta_c \ra_T = \#\mbox{ of axes of conjugates of $c$ in }
Cyl_T(a) \] If $b = c^m$ where $c$ is not a proper power, then $\eta_b
= m\eta_c$.

Recall that an element $\phi \in \Out F_k$ is {\it hyperbolic} if it has no nontrivial periodic conjugacy classes in $F_k$; all such elements are necessarily non-geometric, in the sense that they are not induced by a surface homeomorphism. A hyperbolic fully irreducible element of $\Out F_k$ acts on the projectivized space of currents $\mathbb{P}Curr(F_k)$ with North--South dynamics \cite{thesis:Martin95}.  In particular, there are both \emph{stable} $[\mu_+(\phi)]$ and
\emph{unstable} $[\mu_-(\phi)]$ fixed projectivized currents associated to such an
element.  A similar statement holds for non-hyperbolic fully
irreducible elements as well, after restricting to the subspace of
$\mathbb{P}Curr(F_k)$ consisting just of those currents in the closure of the set of counting currents of primitive elements in $F_k$ \cite{KL_Boundary}.

The \emph{support} $Supp(\nu)$ of a current $\nu$ is the closure of the
union of all open sets $U$ such that $\nu(U) > 0$.  The support of a
current is a lamination.  The relationship between the stable currents and
stable laminations of a fully irreducible element of $\Out F_k$ is given by the  proposition below. The result is probably well-known, but to our knowledge, its proof does not appear in the literature. See
also \cite{un:KL} for closely related results.

\begin{proposition}\label{prop:lamination-current}
  Suppose $\phi \in \Out F_k$ is fully irreducible with stable and
  unstable laminations $\Lambda_+, \Lambda_-$ and stable and unstable
  currents $\mu_+, \mu_-$.  We have $Supp(\mu_\pm) = \Lambda_\pm$.
\end{proposition}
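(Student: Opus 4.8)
The plan is to establish both inclusions $\Lambda_\pm \subseteq Supp(\mu_\pm)$ and $Supp(\mu_\pm) \subseteq \Lambda_\pm$ using the interplay between the North–South dynamics on $\mathbb{P}Curr(F_k)$ and the known convergence $\Lambda(\phi^i(a)) \to \Lambda_+(\phi)$ (together with the corresponding statement for $\phi^{-1}$), and it suffices to treat $\mu_+$ and $\Lambda_+$ since the unstable case follows by replacing $\phi$ with $\phi^{-1}$. The first step is to recall that the stable current $\mu_+$ is obtained as a limit $[\eta_{c_i}] \to [\mu_+]$ of (projectivized) counting currents of primitive elements $c_i$, and in fact we may take $c_i = \phi^i(c)$ for a fixed primitive $c$ (by North–South dynamics, provided $[\eta_c] \neq [\mu_-]$, which holds for generic primitive $c$, or after restricting to the appropriate subspace in the non-hyperbolic case). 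The key point is that $Supp(\eta_{\phi^i(c)}) = \Lambda(\phi^i(c))$, and the classical fact of Bestvina–Feighn–Handel that $\Lambda(\phi^i(c)) \to \Lambda_+(\phi)$ in the Hausdorff topology on laminations.

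For the inclusion $\Lambda_+ \subseteq Supp(\mu_+)$: I would argue by upper semicontinuity of support under weak-* convergence of currents, suitably interpreted. Concretely, suppose $(x_1,x_2)$ is a leaf of $\Lambda_+$; realize it as a bi-infinite geodesic in a fixed tree $T_0 \in cv_k$ and take any finite subpath, i.e., a two-sided cylinder $Cyl_{T_0}(\gamma)$ for a finite word $\gamma$ crossed by the leaf. By quasi-periodicity of leaves of $\Lambda_+$ and the convergence $\Lambda(\phi^i(c)) \to \Lambda_+$, the word $\gamma$ is eventually crossed by leaves of $\Lambda(\phi^i(c))$, hence $\langle \gamma, \eta_{\phi^i(c)}\rangle_{T_0} > 0$ for all large $i$; after projectivizing and passing to the limit one must show this forces $\langle \gamma, \mu_+\rangle_{T_0} > 0$. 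This last implication is the delicate part — it requires that the cylinder mass does not collapse to zero under the normalization, and the standard device is Kapovich's observation that if a sequence of currents converges projectively and each charges a fixed cylinder, then the limit charges that cylinder, provided one chooses the scaling so that $\langle a_0, \nu_i\rangle_{T_0}$ stays bounded away from $0$ and $\infty$ for some fixed $a_0$; one picks $a_0$ to be an element whose axis crosses every leaf of $\Lambda_+$ in bounded-to-one fashion. Since $\gamma$ was an arbitrary finite subword of an arbitrary leaf, $(x_1,x_2) \in Supp(\mu_+)$.

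For the reverse inclusion $Supp(\mu_+) \subseteq \Lambda_+$: here I would use that $\mu_+$ is $\phi$-invariant up to scale (it is the stable eigen-current), so $Supp(\mu_+)$ is a $\phi$-invariant lamination. Then I would invoke the structure theory for $\phi$-invariant laminations of a fully irreducible $\phi$: since $\phi$ is fully irreducible, $\Lambda_+$ is "minimal" in an appropriate sense (every leaf is dense, and any nonempty closed $\phi$-invariant sublamination of the relevant type contains $\Lambda_+$), and conversely $\Lambda_+$ is the unique attracting lamination, so a $\phi$-invariant lamination that is the support of a current — hence "carried" by leaves approximated by the attracting dynamics — must be contained in $\Lambda_+$. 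Alternatively and more concretely: any leaf $\ell$ in $Supp(\mu_+)$ is a limit of axes of conjugates of the $\phi^i(c)$'s (since $\mu_+ = \lim [\eta_{\phi^i(c)}]$ and support is governed by which cylinders are charged), and each such axis lies in $\Lambda(\phi^i(c))$, whose limit is $\Lambda_+$; closedness of $\Lambda_+$ then gives $\ell \in \Lambda_+$.

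The main obstacle I anticipate is the first inclusion — making rigorous the claim that positivity of cylinder mass is preserved in the projective limit. This genuinely requires a careful choice of normalizing element and a uniform lower bound, and is precisely the kind of point the authors flag as "probably well-known... but its proof does not appear in the literature." A secondary subtlety is the non-hyperbolic case, where North–South dynamics only holds on the subspace of $\mathbb{P}Curr(F_k)$ spanned by counting currents of primitives, so one must be careful that $\mu_\pm$ as defined there still arise as limits of the $\eta_{\phi^i(c)}$, which is exactly what the cited result of Kapovich–Lustig provides.
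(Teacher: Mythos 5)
Your outline for the inclusion $\Lambda_+ \subseteq Supp(\mu_+)$ contains a genuine gap, and you have correctly identified where it is: positivity of cylinder mass is \emph{not} preserved under projective limits of currents. Weak-$*$ convergence gives $\liminf_i \nu_i(U) \geq \nu(U)$ for open $U$, which is the wrong inequality for your purposes --- knowing $\langle \gamma, \eta_{\phi^i(c)}\rangle_{T_0} > 0$ for all large $i$ says nothing about $\langle \gamma, \mu_+\rangle_{T_0}$, since the mass on $Cyl_{T_0}(\gamma)$ may be negligible compared to the normalization. The ``standard device'' you gesture at (a normalizing element $a_0$ crossing every leaf in bounded-to-one fashion) is not carried out and would require a uniform \emph{lower} bound on $\langle\gamma,\eta_{\phi^i(c)}\rangle_{T_0}/\lambda_i$, which is precisely the hard content you have not supplied. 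The paper avoids this direction entirely: it proves only $Supp(\mu_+) \subseteq \Lambda_+$ directly, and then observes that $Supp(\mu_+)$ is a nonempty closed sublamination of the \emph{minimal} lamination $\Lambda_+$, so equality is automatic. You invoke minimality, but in the wrong direction (for the reverse inclusion), where it is not what is needed.

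For the inclusion $Supp(\mu_+) \subseteq \Lambda_+$ your second, ``more concrete'' alternative is close in spirit to the paper but is still too loose: the support of a limit measure need not sit inside a limit of supports without a quantitative input. The paper's argument is a counting one, and it is worth internalizing: take a train-track representative $g\co G\to G$ and a primitive $a$ whose reduced loop $\alpha$ is a union of $N$ legal paths, so that $[g^m(\alpha)]$ consists of $N$ leaf segments of $\Lambda_+$; if $\gamma$ is a reduced path that is not a subsegment of any leaf, then $[g^m(\alpha)]$ contains at most $N$ copies of $\gamma$, so $\eta_{\phi^m(a)}(Cyl_{\widetilde G}(\gamma)) \leq N$ for all $m$, while the normalizing constants $\lambda_m$ in $\frac{1}{\lambda_m}\eta_{\phi^m(a)} \to \mu_+$ necessarily diverge (Kapovich--Lustig); hence $\mu_+(Cyl_{\widetilde G}(\gamma)) = 0$, and such cylinders cover the complement of $\Lambda_+$. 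If you replace your first inclusion by the minimality argument and your second by this counting argument, your proof closes up and coincides with the paper's.
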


\begin{proof}
Let $g: G \to G$ be a train-track representative of $\phi$. Let $a \in F_k$ be a primitive element and $\alpha \subset G$ the reduced loop representing its conjugacy class. Then $\alpha$ is the union of $N$ legal paths in $G$ for some $N$, so that for all $m \geq 0$, the closed path $g^m(\alpha)$ consists of $N$ segments of leaves of the stable lamination $\Lambda_+$.

The set of cylinders $Cyl_{\tilde{G}}(\gamma)$, $\gamma$ a reduced path in $\tilde{G}$, not containing any leaf of $\Lambda_+$ give a cover of the complement of $\Lambda_+$. Choose one such cylinder $Cyl_{\tilde{G}}(\gamma)$, so that $\gamma$ is not a subsegment of any leaf of $\Lambda_+$. Then for any $m \geq 0$, the reduced loop $[g^m(\alpha)]$ contains at most $N$ copies of the image of $\gamma$ in $G$, and hence 
$\eta_{\phi^m(a)}(Cyl_{\tilde{G}}(\gamma)) \leq N$. Recall that, because $a$ was chosen to be primitive, we have the convergence of $[\eta_{\phi^m(a)}] \to [\mu_+]$. Now for a sequence $\lambda_m$ to give the convergence of 
$\frac{1}{\lambda_m} \eta_{\phi^m(a)} \to \mu_+$, it is necessary that
$\lambda_m \to \infty$ \cite[Theorem 1.2]{ar:KL10}. Thus we have $\mu_+(Cyl_{\tilde{G}}(\gamma)) = 0$.

We have shown that $Supp(\mu_+) \subseteq \Lambda_+$, a nonempty sublamination of a minimal lamination \cite{ar:BFH97,ar:KL10}. The claim of the proposition is verified.
\end{proof}


\subsection{Axes of products of Dehn twists}\label{ssc:example}

Let $k \geq 3$ and fix two filling cyclic trees $T_1$, $T_2$ with Dehn
twists $\delta_1$ and $\delta_2$.  Let $c_1$, $c_2$ denote the
respective edge stabilizers.  We assume that the set $\{c_1,c_2\}$ is
not \emph{separable}, i.e., no conjugates of $c_1$ and $c_2$ are
contained in a proper free factor of $F_k$, nor in complementary free
factors.  Further, we assume that $c_1$ and $c_2$ are not
\emph{simultaneously elliptic} in $\overline{CV}_k$, i.e.,
$\ell_T(c_1) + \ell_T(c_2) \neq 0$ for all $T \in \overline{cv}_k$.
These conditions can be guaranteed, for instance, by requiring $c_1$
and $c_2$ to be primitive elements sufficiently far apart in the free
factor complex \cite{un:CR}.

For the remainder of this section, elements $\phi_n \in \Out F_k$
denote the outer automorphisms induced by the automorphisms
$\delta_1^n\delta_2^{-n}$. For large enough $n$, the elements $\phi_n$
are fully irreducible and hyperbolic \cite[Theorem 5.3]{ar:CP}.  From
\cite[Theorem~5.2]{un:CP2}, we understand the limiting behavior of the
stable and unstable currents: $[\mu_+(\phi_n)] \to [\eta_{c_1}]$ and
$[\mu_-(\phi_n)] \to [\eta_{c_2}]$.
Using this, together with the parabolic behavior of Dehn twists on $\overline{CV}_k$
\cite{ar:CL95}, we show that the sequence of stable and unstable
trees likewise converge to the expected trees
(cf.,~\cite[Remark~5.3]{un:CP2}).

\begin{theorem}\label{th:treeconvergence}
  The trees $T_+(\phi_n) \in \overline{CV}_k$ converge to $T_2$.
  Similarly, the trees $T_-(\phi_n)$ converge to $T_1$.
\end{theorem}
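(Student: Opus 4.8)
The plan is to use the North–South dynamics of the Dehn twists $\delta_1^{\pm 1}$ and $\delta_2^{\pm 1}$ on $\overline{CV}_k$ together with the known convergence of currents $[\mu_-(\phi_n)] \to [\eta_{c_2}]$. I would prove the statement about $T_-(\phi_n)$; the statement about $T_+(\phi_n)$ then follows by applying the result to $\phi_n^{-1} = \delta_2^n \delta_1^{-n}$ (after relabeling, this is the same kind of sequence, now converging to $T_2$, wait — here I should be careful: $T_-(\phi_n) = T_+(\phi_n^{-1})$, and $\phi_n^{-1} = \delta_2^{n}\delta_1^{-n}$, so the roles of $1$ and $2$ and of $c_1,c_2$ swap, giving $T_+(\phi_n^{-1}) \to T_1$, as claimed). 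So it suffices to show $T_-(\phi_n) \to T_1$.

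First I would recall that $\overline{CV}_k$ is compact, so the sequence $T_-(\phi_n)$ has convergent subsequences; it is enough to show every convergent subsequence has limit $T_1$. Fix such a subsequence with limit $T_\infty \in \overline{CV}_k$. The key tool is the \emph{intersection pairing} between currents and trees: there is a continuous, $\Out F_k$–equivariant, $\BR_{\geq 0}$–bilinear-on-rays pairing $\langle \cdot, \cdot \rangle : \overline{CV}_k \times \mathbb{P}Curr(F_k) \to \BR_{\geq 0}$ (Kapovich–Lustig), with the property that $\langle T, [\mu]\rangle = 0$ if and only if $\mathrm{Supp}(\mu)$ is carried by $T$ in a suitable sense — in particular $\langle T, [\eta_c] \rangle = \ell_T(c)$ for the counting current of a primitive element $c$. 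Since $T_1$ is the cyclic tree dual to the splitting with edge stabilizer $c_1$, we have $\ell_{T_1}(c_1) = 0$, i.e. $c_1$ is elliptic in $T_1$; and $T_1$ is in fact characterized (among very small trees) by the $\delta_1$–invariance together with this ellipticity, or more robustly by being the unique fixed point of $\delta_1$ acting parabolically on $\overline{CV}_k$.

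The heart of the argument: I would show $T_\infty$ is fixed by $\delta_1$, and that $c_1$ is elliptic in $T_\infty$, and then invoke uniqueness. To get $\delta_1$–invariance, use that $T_-(\phi_n)$ is the \emph{attracting} fixed tree for $\phi_n^{-1} = \delta_2^n\delta_1^{-n}$ acting on $\overline{CV}_k$, so $T_-(\phi_n) \cdot \phi_n^{-1} = T_-(\phi_n)$, i.e. $T_-(\phi_n)\cdot \delta_2^n\delta_1^{-n} = T_-(\phi_n)$. One combines this with the fact that $\delta_1^{-n}$ (resp. $\delta_2^{n}$), being a large power of a Dehn twist, moves any point of $\overline{CV}_k$ outside a fixed neighborhood of its fixed-point set toward that fixed-point set (parabolic/North–South-type dynamics for Dehn twists, \cite{ar:CL95}). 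Feeding in the current convergence $[\mu_-(\phi_n)]\to[\eta_{c_2}]$ and the relation (for a fully irreducible $\phi_n$) that $\langle T_-(\phi_n), [\mu_-(\phi_n)]\rangle = 0$ — the unstable tree pairs trivially with the unstable current, cf. Proposition~\ref{prop:lamination-current} identifying supports with laminations — passing to the limit along the subsequence gives $\langle T_\infty, [\eta_{c_2}]\rangle = \ell_{T_\infty}(c_2) = 0$. Then I would run a parallel argument to pin down the $\delta_1$ side: because $\phi_n^{-1}=\delta_2^n\delta_1^{-n}$ and $\delta_2^n$ is parabolic, the limit is forced to lie in the fixed set of $\delta_1^{-1}$, i.e. to be $T_1$ (using the hypothesis that $\{c_1,c_2\}$ is non-separable and not simultaneously elliptic, which prevents degenerate limits where both $\ell(c_1)$ and $\ell(c_2)$ vanish or the trees collapse).

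\textbf{Main obstacle.} The delicate point is making the interaction of the two parabolics rigorous: $\phi_n^{-1}$ is a product $\delta_2^n\delta_1^{-n}$ of two \emph{different} parabolics with growing exponents, and one must show its attracting tree is squeezed, as $n\to\infty$, into the fixed set of $\delta_1$. This requires quantitative control on the parabolic dynamics of Dehn twists near their fixed sets — how fast $\delta_1^{-n}$ contracts toward $\mathrm{Fix}(\delta_1)$ versus how $\delta_2^n$ can push away — and careful bookkeeping to ensure the limit does not escape to a tree where $c_1$ and $c_2$ are simultaneously elliptic (excluded by hypothesis) or to a non-simplicial degeneration. An alternative, and perhaps cleaner, route to circumvent this is to bypass trees entirely on one side: use the continuity of $\mathcal{Q}$-related constructions and Proposition~\ref{prop:lowersemicontinuous} together with the already-established current convergence from \cite{un:CP2} to directly identify $T_-(\phi_n)$'s limit via its dual lamination $L^2_{\mathcal{Q}}$, matching it to that of $T_1$; but the parabolic-dynamics argument above is the more self-contained one, and I expect that step — the squeezing estimate — to be where the real work lies.
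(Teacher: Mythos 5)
There are two genuine problems here. First, your intersection-form step is backwards: the Kapovich--Lustig pairing vanishes between the \emph{stable} tree and the \emph{unstable} current (and vice versa), i.e.\ $\langle T_+(\phi_n),\mu_-(\phi_n)\rangle=0$ and $\langle T_-(\phi_n),\mu_+(\phi_n)\rangle=0$; it does \emph{not} vanish between $T_-(\phi_n)$ and $\mu_-(\phi_n)$ (the dual lamination of $T_-(\phi)$ contains $\Lambda_+(\phi)$, not $\Lambda_-(\phi)$ --- cf.\ Proposition \ref{prop:stableQ}). Your version would yield $\ell_{T_\infty}(c_2)=0$ for any accumulation point $T_\infty$ of $\{T_-(\phi_n)\}$, which is incompatible with the desired limit $T_1$, where $c_2$ is hyperbolic (by the non-simultaneous-ellipticity hypothesis, since $c_1$ is elliptic in $T_1$). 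The correct deduction pairs $T_-(\phi_n)$ with $\mu_+(\phi_n)\to\eta_{c_1}$ to get $\ell_{T_\infty}(c_1)=0$, and hence $\ell_{T_\infty}(c_2)>0$ --- and it is this positivity that is actually needed for the dynamical step.

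Second, and more seriously, the step you yourself flag as ``where the real work lies'' --- squeezing the attracting tree of the product $\delta_2^n\delta_1^{-n}$ of two growing parabolic powers into the fixed set of one of them --- is exactly the content of the theorem, and you have not supplied it; moreover the fixed set of a Dehn twist in $\overline{CV}_k$ is not a single point, so the ``uniqueness of the fixed point'' you invoke is not available as stated. The paper sidesteps the two-parabolic interaction entirely with a conjugation trick: writing $\psi_n=\delta_2^{-n}\delta_1^{n}$, one has $\phi_n=\delta_2^{n}\psi_n\delta_2^{-n}$ and hence $T_+(\phi_n)=T_+(\psi_n)\delta_2^{-n}$. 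The current convergence plus the (correctly oriented) intersection form shows $c_2$ has translation length bounded away from zero on a compact set containing the $T_+(\psi_{n})$ along a subsequence, and then Cohen--Lustig's Theorem 13.2 on the parabolic dynamics of a \emph{single} Dehn twist pushes $T_+(\psi_{n})\delta_2^{-n}$ into any prescribed neighborhood of $T_2$. Without this reduction (or an equivalent quantitative estimate of your own), the argument is incomplete.
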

\begin{proof}
  Denote $\psi_n = \delta_2^{-n} \delta_1^n$ so that $\phi_n =
  \delta_2^n \psi_n \delta_2^{-n}$.  Then as the outer automorphisms are
  conjugate by $\delta_2^n$, we have $T_+(\phi_n) =
  T_+(\psi_n)\delta_2^{-n}$.

 Recall that in \cite[Theorem~5.2]{un:CP2}, we determined that  $\lim_{n \to \infty}[\mu_-(\psi_n)] = [\eta_{c_1}]$.  The continuity of the 
  Kapovich--Lustig intersection form (see \cite{ar:KL10} for its definition and properties) implies that $c_1$ has a fixed point in an
  accumulation point of the sequence $\{T_+(\psi_n)\}$ (see
  \cite[Remark~5.3]{un:CP2}). 
Therefore as $c_1$ and $c_2$ are not simultaneously elliptic, $c_2$ has positive translation length in any such accumulation
  point.

  As $\overline{CV}_k$ is compact, some subsequence of $\{ T_+(\phi_n) \}$
  converges. Consider such a convergent subsequence $\{ T_+(\phi_{n_m})
  \} \subseteq \{T_+(\phi_n)\}$. By passing to a further subsequence, we can assume that 
  both $\{ T_+(\phi_{n_{m_\ell}})\}$ and $\{ T_+(\psi_{n_{m_\ell}})
  \}$ converge. Let $T_\infty$ denote the limit of the
  latter sequence.  By the above remark, $c_2$ has positive
  translation length on the tree $T_\infty$.

  Let $U \subset \overline{CV}_k$ be a neighborhood of $T_2$.  As the
  set $\{ T_+(\psi_{n_{m_\ell}}) \} \cup \{ T_\infty \}$ is compact,
  and as $c_2$ has positive translation length on every tree therein,
  by \cite[Theorem~13.2]{ar:CL95}, there is an $N$ such that for $\ell
  \geq N$ we have $T_+(\phi_{n_{m_\ell}}) =
  T_+(\psi_{n_{m_\ell}})\delta_2^{-n_{m_\ell}} \in U$. Therefore the subsequence $\{ T_+(\phi_{n_m}) \}$ converges to
  $T_2$.  As this is true for every convergent subsequence of $\{
  T_+(\phi_n) \}$, and as $\overline{CV}_k$ is compact, we have the convergence of 
  $T_+(\phi_n) \to T_2$.  Applying the same argument to $\phi_n^{-1} =
  \delta_2^n\delta_1^{-n}$ we see that $T_-(\phi_n) \to T_1$ as well.
\end{proof}

Fix bases $\mathcal{T}_1$ and $\mathcal{T}_2$ for $F_k$,
obtained from the vertex group(s) (and possibly a choice of
stable letter in the case of an HNN-extension) of the Bass-Serre trees $T_1$ and $T_2$, respectively; to see how this is done, we refer to Section 3.1 of 
\cite{ar:CP}. Let $T_{\mathcal{T}_1}$ and $T_{\mathcal{T}_2}$ be the Cayley trees
for the basis $\mathcal{T}_1$ and $\mathcal{T}_2$, respectively. See
\cite{un:CP2, ar:CP} for the details underlying these constructions.

\begin{proposition}\label{prop:ntwist-T2}
  For sufficiently large $n$, we have:
  \[ \tau_{c_2}(T_{\CT_2},\Lambda_-(\phi_n)) \geq \frac{n}{2}. \]
\end{proposition}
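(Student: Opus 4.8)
The plan is to show that for large $n$, a leaf of the unstable lamination $\Lambda_-(\phi_n)$, when read in the Cayley tree $T_{\CT_2}$, overlaps the axis of $c_2$ for length at least $\frac{n}{2}\ell_{T_{\CT_2}}(c_2)$. The natural source of such a leaf is the following: since $\phi_n = \delta_1^n\delta_2^{-n}$ and $\delta_2$ is the Dehn twist associated to the cyclic splitting with edge stabilizer $c_2$, the automorphism $\delta_2^{-n}$ has the effect of ``winding'' around $c_2$ roughly $n$ times. Concretely, in the basis $\CT_2$ adapted to $T_2$, the automorphism $\delta_2$ conjugates certain basis elements by $c_2$ (or sends the stable letter $t$ to $c_2 t$), so that $\delta_2^{-n}$ applied to a suitable word produces a word containing $c_2^{\pm n}$ (or $c_2^{\pm n/2}$ on each side, in the amalgamated case — this is the source of the factor $\frac12$). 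Thus I would first exhibit an explicit conjugacy class, say a fixed primitive $w \in F_k$, such that the reduced word for $\delta_2^{-n}(w)$ in $\CT_2$ — equivalently, the loop representing $\phi_n^{-1}$-image, read appropriately — contains a subword equal to $c_2^{n/2}$, hence $\tau_{c_2}(T_{\CT_2},\Lambda(\delta_1^n\delta_2^{-n}(w))) \geq n/2$ (or a similar bound on a related rational lamination). The ``$n/2$'' rather than ``$n$'' is a safety margin absorbing the possible splitting of the $c_2$-power across the two cosets in an amalgam, as in Example~\ref{ex:twist}.

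Next I would pass from these rational laminations to $\Lambda_-(\phi_n)$. The key input is that the sequence of unstable currents converges, $[\mu_-(\phi_n)] \to [\eta_{c_2}]$ (established in \cite{un:CP2}, recalled in Section~\ref{ssc:example}), together with Proposition~\ref{prop:lamination-current}, which identifies $Supp(\mu_-(\phi_n)) = \Lambda_-(\phi_n)$. Since the counting current $\eta_{c_2}$ is supported exactly on the rational lamination $\Lambda(c_2)$, and $\Lambda(c_2)$ consists of the bi-infinite geodesics that are concatenations of translates of the axis of $c_2$, convergence of currents forces (longer and longer) subsegments of the axis of $c_2$ to appear in leaves of $\Lambda_-(\phi_n)$ for large $n$. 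Because the relevant tree here is $T_{\CT_2} \in cv_k$ (a genuine simplicial tree, not a tree with dense orbits), the map $\CQ$ is just the identification of $\bd F_k$ with $\bd T_{\CT_2}$, so $\alpha_{T_{\CT_2}}$ is literally the geodesic in $T_{\CT_2}$ with the prescribed endpoints, and $L_{T_{\CT_2}}(\alpha_{T_{\CT_2}} \cap T_{\CT_2}^{\la c_2\ra})$ is exactly the overlap length of a leaf with the $c_2$-axis. Thus I need: for $n$ large, some leaf of $\Lambda_-(\phi_n)$ contains a subsegment of the $c_2$-axis of length $\geq \frac{n}{2}\ell_{T_{\CT_2}}(c_2)$.

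To get this, I would combine the two observations: the explicit word $\delta_2^{-n}(w)$ (or rather its $\phi_n$-iterates) shows that already the ``approximating'' laminations — the rational laminations $\Lambda(\phi_n^{m}(w))$ whose limits (over $m$) accumulate on $\Lambda_-(\phi_n)$, cf.\ \cite{ar:BFH97} — have a leaf $n/2$-covering $c_2$ in $T_{\CT_2}$. Since covering a fixed segment is an open condition on laminations (a leaf containing a given finite reduced path $\gamma$ means the lamination meets the cylinder $Cyl(\gamma)$, and the minimality of $\Lambda_-(\phi_n)$ means every leaf contains every finite subpath that occurs in any leaf), the bound $\tau_{c_2}(T_{\CT_2},\Lambda_-(\phi_n)) \geq n/2$ follows once we know the relevant $c_2^{n/2}$-subword occurs in some leaf of $\Lambda_-(\phi_n)$ itself. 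Here the cleanest route is: the loop representing $w$ in the marked graph $T_{\CT_2}/F_k$ is, for the train-track map of $\phi_n^{-1}$, eventually a long concatenation of leaf segments of $\Lambda_+(\phi_n^{-1}) = \Lambda_-(\phi_n)$ (as in the proof of Proposition~\ref{prop:lamination-current}); and $\phi_n^{-m}$-images of this loop contain $c_2$-powers of unbounded length (each application of $\delta_2^{-1}$ adds winding), while the $c_2$-winding, being a legal segment for $\delta_2^{-1}$ and hence surviving in the train-track picture, persists into the leaves. I expect the main obstacle to be precisely this last point — controlling cancellation/illegality so that the explicit $c_2^{n/2}$-power coming from the algebra actually survives as a legal subsegment of a genuine leaf of $\Lambda_-(\phi_n)$ in the train-track for $\phi_n$, rather than being destroyed when passing between the combinatorial (word-in-$\CT_2$) description and the dynamical (lamination) one; this is analogous to, and presumably handled by the same bounded-cancellation estimates as, the proof of Proposition~\ref{prop:unstable}.
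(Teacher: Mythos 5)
Your overall strategy---locate long powers of $c_2$ in the words obtained by iterating $\phi_n^{\pm1}$ and push them into the leaves of $\Lambda_-(\phi_n)$---is in spirit the same as the paper's, which rests entirely on the quantitative computation of $\la c_2^r,\mu_-(\phi_n)\ra_{T_{\CT_2}}$ from \cite[Theorem~5.2]{un:CP2}. But the decisive step is exactly the one you defer to ``bounded-cancellation estimates,'' and the soft inputs you do invoke cannot supply it. The convergence $[\mu_-(\phi_n)]\to[\eta_{c_2}]$, together with openness of cylinders and minimality of the laminations, yields only: for each \emph{fixed} $r$ there is an $N(r)$ such that $\tau_{c_2}(T_{\CT_2},\Lambda_-(\phi_n))\geq r$ for all $n\geq N(r)$. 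Since nothing controls $N(r)$, no bound growing linearly in $n$ follows from this. The proposition needs the window $r=n/2$ to grow with $n$, so one must rerun the occurrence count of $c_2^r$ in the iterated words with $r=n/2$ and track every constant; this is precisely what the paper does in (\ref{eqn:5.9ii}), obtaining
\[
\Bigl|1-\la c_2^{n/2},\mu_-(\phi_n)\ra_{T_{\CT_2}}\big/\omega_{T_{\CT_2}}(\mu_-(\phi_n))\Bigr|
\;<\;\Bigl|\tfrac{\frac12 An^2+A_1n+A_2}{An^2-B_1n-B_2}\Bigr|+\tfrac{\epsilon}{2}\;\longrightarrow\;\tfrac12+\tfrac{\epsilon}{2}<1,
\]
whence $\la c_2^{n/2},\mu_-(\phi_n)\ra_{T_{\CT_2}}>0$ and some leaf of $Supp(\mu_-(\phi_n))=\Lambda_-(\phi_n)$ lies in $Cyl_{T_{\CT_2}}(c_2^{n/2})$. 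Your sketch would have to be made uniform in $n$ in exactly this way to close the gap.

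Two smaller points. First, the factor $\tfrac12$ is the margin in this count---each block of $c_2^{\pm n}$ in the iterated word contributes on the order of $n-r$ occurrences of $c_2^{r}$, so taking $r=n$ would drive the ratio to $1$ and lose positivity---not a coset-splitting artifact of the amalgamated case as you guess. Second, the rational laminations $\Lambda(\phi_n^{m}(w))$ accumulate (over $m$) on $\Lambda_+(\phi_n)$, not $\Lambda_-(\phi_n)$; for the unstable lamination you must iterate $\phi_n^{-1}=\delta_2^{n}\delta_1^{-n}$, and the $c_2$--winding then comes from the $\delta_2^{n}$ factor, not from $\delta_2^{-n}$ as written.
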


\begin{proof}
  The proposition follows from a slight modification of the arguments from
  Theorem 5.2 in \cite{un:CP2}.  In its proof (equation (5.9)), we
  showed that for for every $\epsilon >0 $ and integer $r > 0$,
  there is an $N > 0$ such that for $n \geq N$:
  \begin{equation}\label{eqn:5.9}
    \frac{ \la c_2^r, \mu_-(\phi_n) \ra_{T_{\CT_2}}   }
    {\omega_{T_{\CT_2}}(\mu_-(\phi_n))} > 1 - \epsilon.
  \end{equation}
  The $\omega_{T_{\CT_2}}(\cdot)$ in the demoninator is a normalization factor whose only relevant value to the present discussion is $\omega_{T_{\CT_2}}(\mu_-(\phi_n))$; it may thus be treated as a positive constant.\footnote{On the other hand, to recognize (\ref{eqn:5.9}) from equation (5.9) in the proof of  \cite[Theorem 5.2]{un:CP2}, it should be observed that 
  $\omega_{T_{\CT_2}}(\eta_{c_2})=\langle c_2^r, \eta_{c_2} \rangle_{T_{\CT_2}}$.}
   Equation (\ref{eqn:5.9}) shows that there is a leaf of
  $\Lambda_-(\phi_n)=Supp(\mu_-(\phi_n))$ contained in the cylinder
  $Cyl_T(c_2^{r})$, and hence $\tau_{c_2}(T_{\CT_2},\Lambda_-(\phi_n)) \geq r$.

Following the same analysis as in \cite[Theorem~5.2]{un:CP2}, fixing $r=n/2$, one can show:
  \begin{equation}\label{eqn:5.9ii}
    \left| 1 - \frac{\la c_2^{n/2}, \mu_-(\phi_n) \ra_{T_{\CT_2}}}
      {\omega_{T_{\CT_2}}(\mu_-(\phi_n))} \right| <
    \left|\frac{\frac{1}{2}An^2 + A_1n + A_2}{An^2 - B_1n - B_2} \right| 
    + \frac{\epsilon}{2}  
  \end{equation}
  for some fixed positive constants $A,A_1,A_2,B_1$ and
  $B_2$.\footnote{Compare this to equation (5.9) from \cite[Theorem
    5.2]{un:CP2} where the numerator of the righthand side is linear
    in $n$, and note that the constants $\beta_1, \beta_2$ there
    depend on $r$. Here in (\ref{eqn:5.9ii}), the numerator is
    quadratic because of the choice of $r = n/2$.}  Thus for large
  enough $n$, we have that $\la c^{n/2},\mu_-(\phi_n) \ra_T > 0$, and
  hence there is a leaf of $\Lambda_-(\phi_n) = Supp(\mu_-(\phi_n))$
  contained in the cylinder $Cyl_T(c_2^{n/2})$. This implies that
  $\tau_{c_2}(T_{\CT_2},\Lambda_-(\phi_n)) \geq n/2$, as claimed.
\end{proof}

We can use the fact that the sequence of trees $\{ T_-(\phi_n) \}$
converges to $T_1$ to show that the twist of $T_-(\phi_n)$ with
$\Lambda_-(\phi_n)$ relative to $c_2$ is also approximately at least
$n$.

\begin{proposition}\label{prop:ntwist}
  There exists a constant $D \geq 1$ such that for sufficiently large
  $n$:
  \[ \tau_{c_2}(T_-(\phi_n),\Lambda_-(\phi_n)) \geq \frac{n}{D}. \]
\end{proposition}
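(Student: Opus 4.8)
The plan is to push the lower bound $\tau_{c_2}(T_{\CT_2},\Lambda_-(\phi_n)) \geq n/2$ from Proposition \ref{prop:ntwist-T2} forward along $F_k$--equivariant maps $T_{\CT_2} \to T_-(\phi_n)$, using the convergence $T_-(\phi_n) \to T_1$ from Theorem \ref{th:treeconvergence} to keep all error terms bounded independently of $n$. First I would record the uniform constants. Since $T_1$ is the Bass--Serre tree of the cyclic splitting defining $\delta_1$, the element $c_1$ is elliptic in $T_1$; as $\{c_1,c_2\}$ is not simultaneously elliptic, $\ell_{T_1}(c_2) > 0$, and hence by continuity of translation length along $T_-(\phi_n) \to T_1$ (taking representatives in $\overline{cv}_k$) there is $\ell_1 > 0$ with $\ell_{T_-(\phi_n)}(c_2) \geq \ell_1$ for all large $n$. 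I would also fix, for each large $n$, an $F_k$--equivariant map $f_n \co T_{\CT_2} \to T_-(\phi_n)$ that is linear on edges; since $BBT(f_n) \leq \Lip(f_n)\,\vol(T_{\CT_2}/F_k)$ with $\vol(T_{\CT_2}/F_k)$ fixed, and $\Lip(f_n)$ can be taken bounded independently of $n$ because $T_-(\phi_n) \to T_1$, there is a constant $B$ with $BBT(f_n) \leq B$ for all large $n$.

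Next, by Proposition \ref{prop:ntwist-T2} (in fact by its proof), for large $n$ there is a leaf $\alpha_n$ of $\Lambda_-(\phi_n)$ lying in the cylinder $Cyl_{T_{\CT_2}}(c_2^{n/2})$; after replacing $c_2$ by a conjugate this says that the geodesic $(\alpha_n)_{T_{\CT_2}} \subset T_{\CT_2}$ contains a segment $\sigma_n = [x_n, c_2^{n/2}x_n]$ with $x_n$ on the axis $T_{\CT_2}^{\la c_2\ra}$. Applying bounded backtracking to the subsegments $[c_2^{-N}x_n, c_2^N x_n] \ni x_n$ and letting $N \to \infty$ gives $d_{T_-(\phi_n)}(f_n(x_n), T_-(\phi_n)^{\la c_2\ra}) \leq B$, so the geodesic $[f_n(x_n), c_2^{n/2}f_n(x_n)]$ in $T_-(\phi_n)$ runs along the axis $T_-(\phi_n)^{\la c_2\ra}$ for a segment of length exactly $(n/2)\,\ell_{T_-(\phi_n)}(c_2)$. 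Since $\Lambda_-(\phi_n) = \Lambda_+(\phi_n^{-1})$ contains no $T_-(\phi_n) = T_+(\phi_n^{-1})$--bounded geodesic (Remark \ref{rm:T-bounded}(1) applied to $\phi_n^{-1}$), the realization $(\alpha_n)_{T_-(\phi_n)}$ is an honest bi-infinite geodesic; and since $[f_n(x_n), c_2^{n/2}f_n(x_n)] \subset f_n(\sigma_n) \subset f_n\big((\alpha_n)_{T_{\CT_2}}\big)$, bounded backtracking (in the form used in Lemma \ref{lm:surviving-subsegment}) puts this geodesic in the $B$--neighborhood of $(\alpha_n)_{T_-(\phi_n)}$. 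A routine argument in the $\BR$--tree $T_-(\phi_n)$ — a bi-infinite geodesic within Hausdorff distance $B$ of a segment that covers $(n/2)$ fundamental domains of the $c_2$--axis must itself cover all but a bounded amount of that segment — then forces $(\alpha_n)_{T_-(\phi_n)}$ to overlap $T_-(\phi_n)^{\la c_2\ra}$ in a segment of length at least $(n/2)\,\ell_{T_-(\phi_n)}(c_2) - 2B$. Dividing by $\ell_{T_-(\phi_n)}(c_2) \geq \ell_1$ gives $\tau_{c_2}(T_-(\phi_n),\Lambda_-(\phi_n)) \geq n/2 - 2B/\ell_1$, which exceeds $n/3$ for $n$ large, so the statement holds with, say, $D = 3$.

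The hard part is the bookkeeping of the second paragraph: one must transfer the quantitative estimate ``the leaf runs along the $c_2$--axis for $n/2$ fundamental domains'' — which in the Cayley tree $T_{\CT_2}$ is essentially a combinatorial statement about the reduced word of the leaf — to the $\BR$--tree $T_-(\phi_n)$, which has dense orbits and is only reached in the limit, while losing only a bounded additive amount. The convergence $T_-(\phi_n) \to T_1$ is precisely what makes both $BBT(f_n)$ and $\ell_{T_-(\phi_n)}(c_2)$ uniform in $n$, so that the loss is an additive constant rather than a quantity growing with $n$; this is exactly why the bound degrades only from $n/2$ to $n/D$ rather than being destroyed.
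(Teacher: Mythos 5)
Your proposal is correct and follows essentially the same route as the paper's proof: invoke Proposition \ref{prop:ntwist-T2}, use the convergence $T_-(\phi_n)\to T_1$ to obtain uniform bounds on $BBT(f_n)$ and a uniform positive lower bound on $\ell_{T_-(\phi_n)}(c_2)$, push the leaf's overlap with the $c_2$--axis forward under $f_n$, and control the loss under tightening by bounded backtracking, so that only a bounded additive amount is lost. The only cosmetic differences are that you make explicit why $\ell_{T_1}(c_2)>0$ and why the tightened leaf is an honest bi-infinite geodesic, points the paper leaves implicit.
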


\begin{proof}
  As before, let $T_{\CT_2}$ be the Cayley tree corresponding to the basis $\CT_2$. By Proposition \ref{prop:ntwist-T2}, for each $n$ there is a leaf
  $\ell_n \co \BR \to T_{\CT_2}
  $ of $\Lambda_-(\phi_n)$ 
  that intersects the axis of $c_2$ in a segment of length at least
  $n\ell_T(c_2)/2$.  We must verify that this overlap is not significantly reduced when mapping to $T_-(\phi_n)$.

  Fix an $F_k$--equivariant map $f\co T_{\CT_2} \to T_1$ and scale the metric
  on $T_1$ so that $\Lip(f) \leq 1$, and thus $BBT(f) \leq 1$ (we 
  assume that the volume of $T_{\CT_2}/F_k$ is 1).  By scaling the metrics on
  $T_-(\phi_n)$ we have the convergence of $T_-(\phi_n) \to T_1$ from Theorem \ref{th:treeconvergence}. Thus for large
  enough $n$, we can choose equivariant maps $f_n \co T_{\CT_2} \to T_-(\phi_n)$ so that 
  $\Lip(f_n) \leq 2$ and so $BBT(f_n) \leq 2$.  As convergence is in the
  space of length functions, 
  and as $\ell_{T_1}(c_2) > 0$, there is 
  $\delta > 0$ such that $0 < \delta < \ell_{T_-^n}(c_2) < 1/\delta$
  for all $n$.

  Now let $x_n \in \ell_n(\BR) \cap T_{\CT_2}^{\la c_2 \ra}$ be such that $y_n =
  c_2^{n/2}x_n \in \ell_n(\BR) \cap T_{\CT_2}^{\la c_2 \ra}$.  Thus the path
  $[f_n(x_n), f_n(y_n)]$ contains an arc of the axis of $c_2$ in
  $T_-(\phi_n)$ of length at least $\frac{n}{2}\ell_{T_-(\phi_n)}(c_2)$.  Further
  notice that the distance from either $f_n(x_n)$ or $f_n(y_n)$ to this arc
  is at most 2 (an upper bound for the bounded back tracking constant).

  As $\ell_n \co \BR \to T_{\CT_2}$ is a leaf of the unstable lamination, after tightening
  its image in $T_-(\phi_n)$ we obtain a geodesic $[f_n(\ell_n(\BR))]$, and
  the same statement in the previous paragraph for the segment
  $[f_n(x_n),f_n(y_n)]$ and the axis of $c_2$ holds in turn for $[f_n(x_n),f_n(y_n)]$
  and the geodesic $[f_n(\ell_n(\BR))]$.  Hence the leaf of $\Lambda_-(\phi_n)$ whose image in $T_-(\phi_n)$ is $[f_n(\ell_n(\BR)]$ intersects the axis of $c_2$ along a segment of length at least:
  \begin{align*}
    \frac{n\ell_{T_-(\phi_n)}(c_2)}{2} - 4 &> \frac{n\ell_{T_-(\phi_n)}(c_2)}{2} -
    \frac{4\ell_{T_-(\phi_n)}(c_2)}{\delta}  \\
    &= \ell_{T_-(\phi_n)}(c_2)\left(\frac{n\delta - 8}{4\delta} \right) \\
    & = \ell_{T_-(\phi_n)}(c_2)\frac{n}{D}
  \end{align*}
  for some constant $D > 0$, provided $n > 8/\delta$.  Thus
  \[\tau_{c_2}(T_-(\phi_n),\Lambda_-(\phi_n)) \geq \frac{n}{D}.\]
\end{proof}

It follows from the proposition, together with Theorem \ref{th:alg-twist}, that for each element $\phi_n$, there is a train-track map 
$g_n\co
G_n \to G_n$ such that $\ell_{G_n}(c_2) \leq D'/n$ for some constant $D'$. Note that we are using the fact that every graph on the axis of $\phi$ represents a train-track of $\phi$.

Recall that we assumed that $\{ c_1, c_2 \}$ is not separable in $F_k$. 

\begin{lemma}\label{lm:separable}
  If $\{c_1, c_2 \}$ is not separable, then for large enough $n$,
  neither is $\{c_2,\delta^n_1(c_2)\}$.
\end{lemma}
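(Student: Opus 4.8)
The plan is to argue by contradiction, using both standing hypotheses of Section~\ref{ssc:example}: that $\{c_1,c_2\}$ is neither separable nor simultaneously elliptic. Suppose $\{c_2,\delta_1^n(c_2)\}$ were separable for every $n$ in some infinite set of integers. Unwinding the definition of separability, for each such $n$ there is a free splitting of $F_k$ --- with Bass--Serre tree $S_n$ --- in which both $c_2$ and $\delta_1^n(c_2)$ are elliptic: if conjugates of $c_2$ and of $\delta_1^n(c_2)$ lie in a common proper free factor $A_n$, use the splitting $F_k = A_n * C_n$ (with $C_n\neq 1$ since $A_n$ is proper); if instead they lie in complementary factors, use that splitting. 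In either case $S_n$ is a nontrivial minimal very small tree, so $[S_n]\in\overline{CV}_k$, and $\ell_{S_n}(c_2) = \ell_{S_n}(\delta_1^n(c_2)) = 0$.

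Next I would record that $c_2$ is hyperbolic in the cyclic tree $T_1$: since $c_1$ is the edge stabilizer of $T_1$ we have $\ell_{T_1}(c_1) = 0$, and as $c_1$ and $c_2$ are not simultaneously elliptic, $\ell_{T_1}(c_1) + \ell_{T_1}(c_2)\neq 0$ forces $\ell_{T_1}(c_2) > 0$. Hence $\delta_1^n(c_2)$ is the conjugacy class of a cyclic word gotten from a cyclically reduced form of $c_2$ by inserting a block $c_1^{\pm n}$ at each of its (finitely many, $n$-independent) crossings of the edge orbit of $T_1$. For large $n$ this cyclic word is dominated by its $c_1$-blocks, so a direct count on cylinders gives $[\eta_{\delta_1^n(c_2)}]\to[\eta_{c_1}]$ in $\BP\Curr(F_k)$; this is the counterpart for currents of the parabolic convergence $T\delta_1^n\to T_1$ in $\overline{CV}_k$ \cite{ar:CL95}.

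Now pass to a subsequence along which $[S_n]\to[S_\infty]$ in the compact space $\overline{CV}_k$, and rescale so that $S_n\to S_\infty$ in $\overline{cv}_k$. Let $\la\cdot,\cdot\ra$ denote the Kapovich--Lustig intersection form \cite{ar:KL10}. Ellipticity gives $\la S_n,\eta_{c_2}\ra = 0$ and $\la S_n,\eta_{\delta_1^n(c_2)}\ra = 0$ for all $n$ along the subsequence; choosing scalars $\lambda_n>0$ with $\mu_n := \eta_{\delta_1^n(c_2)}/\lambda_n\to\eta_{c_1}$, we also get $\la S_n,\mu_n\ra = 0$. Continuity and bihomogeneity of the intersection form then yield $\la S_\infty,\eta_{c_1}\ra = 0 = \la S_\infty,\eta_{c_2}\ra$, and since $\la T,\eta_c\ra = \ell_T(c)$ this says $\ell_{S_\infty}(c_1) = \ell_{S_\infty}(c_2) = 0$. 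As $S_\infty$ is a nontrivial tree in $\overline{cv}_k$, this contradicts the hypothesis that $c_1$ and $c_2$ are not simultaneously elliptic. Therefore $\{c_2,\delta_1^n(c_2)\}$ is separable for only finitely many $n$. (If $c_2$ lies in no proper free factor there is nothing to prove, since then $\{c_2,\delta_1^n(c_2)\}$ is never separable; the argument above also covers this case.)

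The main obstacle I anticipate is the passage to the limit in the last two paragraphs: one must pin down $[\eta_{\delta_1^n(c_2)}]\to[\eta_{c_1}]$ and then apply continuity of the Kapovich--Lustig intersection form on $\overline{cv}_k\times\Curr(F_k)$ exactly in the form established in \cite{ar:KL10}. It is also worth saying why the final contradiction comes from ``not simultaneously elliptic'' rather than from ``not separable'': a limit in $\overline{CV}_k$ of free splittings need not be a free splitting, so the argument only produces a \emph{very small} tree $S_\infty$ in which $c_1$ and $c_2$ are both elliptic --- which is precisely what the non-simultaneous-ellipticity assumption rules out.
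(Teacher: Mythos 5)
Your argument is correct, but it takes a genuinely different route from the paper's. The paper's proof is a short combinatorial one via Whitehead graphs: by Stallings' cut-vertex criterion \cite{col:Stallings99}, non-separability of $\{c_1,c_2\}$ yields a basis in which the union of their Whitehead graphs is connected with no cut vertex; bounded cancellation makes $c_1$ appear as a subword of $\delta_1^n(c_2)$ for large $n$, so the union of the Whitehead graphs of $c_2$ and $\delta_1^n(c_2)$ contains that of $c_1$ and $c_2$ and hence is again connected with no cut vertex, whence $\{c_2,\delta_1^n(c_2)\}$ is not separable. You instead run a soft compactness argument: a witnessing free splitting $S_n$ for each bad $n$, convergence $[S_n]\to[S_\infty]$ in $\overline{CV}_k$, the parabolic convergence $[\eta_{\delta_1^n(c_2)}]\to[\eta_{c_1}]$ (which indeed needs $\ell_{T_1}(c_2)>0$ and a bounded-cancellation count on cylinders, and is available in the authors' prior work on twisting currents), and joint continuity of the Kapovich--Lustig intersection form to conclude $\ell_{S_\infty}(c_1)=\ell_{S_\infty}(c_2)=0$. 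What the paper's approach buys is elementarity and a direct verification of the stated implication; what yours buys is independence from choosing a good basis and a template that generalizes to other "ellipticity-detecting" properties closed under limits in $\overline{cv}_k$.

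One caveat you should make explicit: your contradiction is with non-simultaneous ellipticity, and the hypothesis "$\{c_1,c_2\}$ is not separable" is never actually invoked. Since separability of $\{c_1,c_2\}$ forces simultaneous ellipticity (in the Bass--Serre tree of the witnessing free splitting), non-simultaneous ellipticity is the stronger assumption, so you have proved a formally weaker statement than the lemma as written. This is harmless here --- both conditions are standing hypotheses in Section \ref{ssc:example}, and your version suffices for the application --- but the literal implication "not separable $\Rightarrow$ not separable for large $n$" is what the Whitehead-graph argument delivers.
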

\begin{proof}
  This is easy to see using Whitehead graphs.  Since the set $\{
  c_1,c_2 \}$ is not separable, the union of their Whitehead graphs is
  connected and does not have a cut vertex (in an appropriate basis)
  \cite{col:Stallings99}.  As cancellation is bounded, for large
  enough $n$, the subword representing $c_1$ will appear as a subword
  of $\delta_1^n(c_2)$.  Hence the union of the Whitehead graphs
  of $c_2$ and $\delta_1^n(c_2)$ will cover the union of the
  Whitehead graphs of $c_1$ and $c_2$.  In particular, their union
  will be connected and will not have a cut vertex.  This implies the
  set $\{ c_2, \delta_1^n(c_2) \}$ is not separable.
\end{proof}

Note that $\phi_n(c_2) = \delta_1^n(c_2)$ so that, as a consequence of the lemma, every edge in the track-track graph $G_n$ must be crossed by either
$c_2$ or $\phi_n(c_2)$.  Therefore, the length of $\phi_n(c_2)$ is
at least $1 - D'/n = (n-D')/n$, and thus the Lipschitz constant for
$\phi_n$ is at least 
$$\frac{(n-D')/n}{D'/n} = n/D' -1.$$
In particular, we have now shown that for some constant $K_1>0$:
\[ \frac{1}{K_1} \log n \leq tr_{CV_k}(\phi_n) \] where
$tr_{CV_k}(\phi) = \min \{ d_L(G,G\phi) \, | \, G \in CV_k \}$ is the
minimal translation length of the element $\phi$.

We obtain the corresponding upper bound on $tr_{CV_k}(\phi_n)$ by
explicitly constructing a path by piecing together geodesic segments
such as those constructed in Example \ref{ex:twist}.

As before, let $T_{\mathcal{T}_1}$ and $T_{\mathcal{T}_2}$ be the
Cayley trees for the basis $\mathcal{T}_1$ and $\mathcal{T}_2$,
respectively.  We consider these trees as points in $CV_k$, with every
edge of each tree having length $1/k$.  We first connect
$T_{\mathcal{T}_2}\delta_2^{n}$ to $T_{\mathcal{T}_2}$ by a geodesic of length $\sim \log n$. Then we
follow an optimal path $P$ from $T_{\mathcal{T}_2}$ to
$T_{\mathcal{T}_1}$, and then connect $T_{\mathcal{T}_1}$ to
$T_{\mathcal{T}_1}\delta_1^{n}$ with a geodesic which has length $\sim
\log n$.  Finally, using the $\delta_1^{n}$--translate $P
\delta_1^{n}$ of $P$, we connect $T_{\mathcal{T}_1}\delta_1^{n}$ to
$T_{\mathcal{T}_2}\delta_1^{n}$ (see Figure \ref{fig:path}).  As the
length of $P$ is independent of $n$, translating the entire path by
$\delta_2^{-n}$, we have for all $n$:
\[ d_L(T_{\mathcal{T}_2},T_{\mathcal{T}_2}\phi_n ) \leq K_2\log n \] for
some $K_2>0$.

\bigskip
\begin{figure}[ht]
  \centering
  \labellist
  \small\hair 2pt
  \pinlabel {$T_{\mathcal{T}_2}\delta_2^n$} at 0 42
  \pinlabel {$T_{\mathcal{T}_2}$} at 210 42
  \pinlabel {$T_{\mathcal{T}_2}\delta_1^{n}$} at 410 42
  \pinlabel {$T_{\mathcal{T}_1}$} at 135 -10
  \pinlabel {$T_{\mathcal{T}_1}\delta_1^{n}$} at 335 -10
  \pinlabel {$\sim \log n$} at 95 35
  \pinlabel {$\sim \log n$} at 230 -3
  \endlabellist
  \includegraphics[scale=0.7]{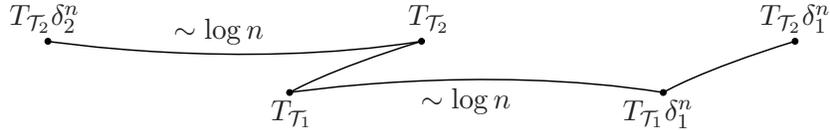}
  \caption{A path from $T_{\mathcal{T}_2}\delta_2^n$ to
    $T_{\mathcal{T}_2}\delta_1^n$.}
  \label{fig:path}
\end{figure}

Combining this upper bound with the previous lower bound, we have
established the following:

\begin{theorem}\label{th:twist-translation}
  Let $T_1,T_2$ be two cyclic trees that fill with associated Dehn
  twist automorphisms $\delta_1$ and $\delta_2$ and let $c_1$, $c_2$
  denote the respective edge stabilizers. Suppose that $\{ c_1,c_2 \}$
  is not separable and that $c_1$ and $c_2$ are not simultaneouly
  elliptic in $\overline{CV}_k$. For $n \geq 1$, let $\phi_n$ be the
  outer automorphism induced by $\delta_1^n\delta_2^{-n}$.  Then there
  is a constant $K = K(T_1,T_2)$ such that for large enough $n$:
  \begin{enumerate}
  \item there is a train-track representative $g_n\co G_n \to G_n$
    such that $\ell_{G_n}(c_2) \leq K/n$, and
  \item $\frac{1}{K} \log n \leq tr_{CV_k}(\phi_n) \leq K \log n$. 
  \end{enumerate}
\end{theorem}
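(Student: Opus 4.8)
The plan is to collect the ingredients assembled over the course of Section~\ref{sc:example}; there is essentially no new analytic content, only bookkeeping of the constants, which I will uniformize into a single $K=K(T_1,T_2)$ at the end. Throughout, take $n$ large enough that $\phi_n$ is fully irreducible \cite[Theorem~5.3]{ar:CP} and that every ``for sufficiently large $n$'' conclusion quoted below applies. For part~(1) I would start from Proposition~\ref{prop:ntwist}, which supplies a constant $D\geq 1$ with $\tau_{c_2}(T_-(\phi_n),\Lambda_-(\phi_n))\geq n/D$. Setting $m=\lfloor n/D\rfloor-4$, we have $\tau_{c_2}(T_-(\phi_n),\Lambda_-(\phi_n))\geq m+4$ for large $n$, so Theorem~\ref{th:alg-twist} (with $a=c_2$) applies: feeding it any train-track for $\phi_n$, it produces an axis $\CL_{\phi_n}$ containing a graph $G_n$ with $\ell_{G_n}(c_2)\leq 1/m$. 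Since $1/m\leq D'/n$ for a suitable constant $D'$ and all large $n$, and since every graph on the axis of a fully irreducible element is a train-track for it, $G_n$ carries a train-track map $g_n\co G_n\to G_n$ with $\ell_{G_n}(c_2)\leq D'/n$, as required.

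For the lower bound in part~(2), I would use Lemma~\ref{lm:separable}: the set $\{c_2,\delta_1^n(c_2)\}$ is not separable for large $n$, and since $\phi_n(c_2)=\delta_1^n(c_2)$ (as $\delta_2$ fixes $c_2$), the set $\{c_2,\phi_n(c_2)\}$ is not separable. Hence every edge $e$ of $G_n$ is crossed by the reduced loop representing $c_2$ or by the one representing $\phi_n(c_2)$: if neither crossed $e$, both loops would be carried by $G_n\setminus e$, whose fundamental group(s) form a proper free factor when $e$ is nonseparating, or a pair of complementary free factors when $e$ is separating, contradicting non-separability. Since $G_n\in CV_k$ has total length $1$ and $\ell_{G_n}(c_2)\leq D'/n$, the edges not traversed by the $c_2$-loop have total length at least $1-D'/n$, and all of them are traversed by the $\phi_n(c_2)$-loop, so $\ell_{G_n}(\phi_n(c_2))\geq (n-D')/n$. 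Applying Proposition~\ref{prop:loops} with the element $c_2$ then gives
\[
d_L(G_n,G_n\phi_n)\ \geq\ \log\frac{\ell_{G_n}(\phi_n(c_2))}{\ell_{G_n}(c_2)}\ \geq\ \log\frac{(n-D')/n}{D'/n}\ =\ \log\bigl(n/D'-1\bigr),
\]
and because $G_n$ lies on the geodesic axis $\CL_{\phi_n}$, along which $\phi_n$ translates by exactly $\log\lambda_{\phi_n}$, we have $d_L(G_n,G_n\phi_n)=\log\lambda_{\phi_n}=tr_{CV_k}(\phi_n)$. This yields $tr_{CV_k}(\phi_n)\geq \tfrac{1}{K_1}\log n$ for large $n$ and a suitable $K_1$.

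For the upper bound I would construct the path of Figure~\ref{fig:path}. Concatenate: a geodesic from $T_{\CT_2}\delta_2^{n}$ to $T_{\CT_2}$ of length $\sim\log n$ (Example~\ref{ex:twist} applied to $\delta_2$, using that the right action of $\Out F_k$ preserves $d_L$ to rewrite $d_L(T_{\CT_2}\delta_2^n,T_{\CT_2})=d_L(T_{\CT_2},T_{\CT_2}\delta_2^{-n})$); a path $P$ from $T_{\CT_2}$ to $T_{\CT_1}$ whose length does not depend on $n$; a geodesic from $T_{\CT_1}$ to $T_{\CT_1}\delta_1^{n}$ of length $\sim\log n$ (Example~\ref{ex:twist} applied to $\delta_1$); and the translate $P\delta_1^{n}$ from $T_{\CT_1}\delta_1^{n}$ to $T_{\CT_2}\delta_1^{n}$, of the same length as $P$. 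The total length is at most $K_2\log n$. Translating the whole concatenation by $\delta_2^{-n}$ yields a path of the same length from $T_{\CT_2}$ to $T_{\CT_2}\delta_1^n\delta_2^{-n}=T_{\CT_2}\phi_n$, so by definition of $tr_{CV_k}$ we get $tr_{CV_k}(\phi_n)\leq d_L(T_{\CT_2},T_{\CT_2}\phi_n)\leq K_2\log n$. Taking $K=\max\{D',K_1,K_2\}$, enlarged if needed to absorb the finitely many thresholds on $n$, gives both conclusions.

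Almost everything above is a citation: Propositions~\ref{prop:ntwist} and~\ref{prop:loops}, Theorem~\ref{th:alg-twist}, Lemma~\ref{lm:separable}, and Example~\ref{ex:twist}. The one step I expect to need genuine care is the lower bound, and specifically the interplay between two roles of $G_n$: it must carry a \emph{short} loop $c_2$ (from Theorem~\ref{th:alg-twist}) \emph{and} sit on the axis of $\phi_n$, so that the short-loop estimate controls $tr_{CV_k}(\phi_n)=\log\lambda_{\phi_n}$ rather than merely the $d_L$-distance between two unrelated points; non-separability must then be invoked to force $\phi_n(c_2)$ to be long on that same $G_n$. The residual work is tracking the dependence of $D,D',K_1,K_2$ on $T_1,T_2$ through Propositions~\ref{prop:ntwist} and~\ref{prop:ntwist-T2} so as to record a single constant $K=K(T_1,T_2)$.
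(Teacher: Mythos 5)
Your proposal is correct and follows essentially the same route as the paper: Proposition~\ref{prop:ntwist} feeding into Theorem~\ref{th:alg-twist} for part (1), the non-separability of $\{c_2,\phi_n(c_2)\}$ via Lemma~\ref{lm:separable} for the lower bound on the stretch of $\phi_n(c_2)$ at $G_n$, and the concatenated path of Figure~\ref{fig:path} for the upper bound. The details you flag as needing care (that $G_n$ lies on the axis and hence realizes $tr_{CV_k}(\phi_n)=\log\lambda_{\phi_n}$, and the edge-counting behind the non-separability step) are exactly the points the paper treats implicitly, and you handle them correctly.
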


\bibliography{bibliography}

\bibliographystyle{siam}

\end{document}